\title{\vspace{-1in}\huge
A generalized moment approach to sharp bounds for conditional expectations}
\author{
\normalsize Wouter J.E.C. van Eekelen \vspace{-1mm}\\ 
\scriptsize Booth School of Business, University of Chicago, \href{Wouter.vanEekelen@chicagobooth.edu}{Wouter.vanEekelen@chicagobooth.edu}   \vspace{.5mm} \\ 
} 
\date{\small\mydate{\today}}
\begin{document}

\maketitle

\section{Introduction}
Distribution-free performance analysis of stochastic models strives to obtain tight bounds for the expectation of an objective function of random variables, using only limited information about the underlying probability distributions. Traditionally, given the moment information about the random variables, these problems are modeled as generalized moment problems. The sharpest (i.e., “best possible”) upper and/or lower bounds for these objective functions of random variables are found by solving semi-infinite optimization problems, where the optimization is taken over all admissible distributions of the random variables. In this paper, we explore a new situation in which, besides the given moment sequence, we possess stronger information---that is, we have knowledge that a particular random event will occur, which pertains to the realizations of the random variables rather than their underlying probability distribution. We are interested in solving the moment problem with the knowledge of this random event, which may be based on assumptions, assertions, expert judgment or past observations. Alternatively, we may want to determine the worst-case behavior of a stochastic model if such a random event occurs. Conditional expectations offer a way to model this event information and provide us with the best estimate of the expected value of a function of random variables, knowing that the specified random event will occur.
It is of interest to define a generalized moment framework for this new setting with conditional expectations, instead of standard expectations in which the random variables are not restricted to take on a subset of the values in the event set. However, random variables conditioned on a random event give rise to a different probabilistic concept, requiring a distinct type of analysis compared to the traditional theory on generalized moment problems.

Moment problems have been investigated by probability theorists since the end of the 19th century, see, e.g., \cite{chebyshev1874valeurs,stieltjes1894recherches,stieltjes1895recherches,markov1884certain}. In its most classical form, the problem of moments is a feasibility problem that aims to determine if there exists a probability distribution that satisfies the given distributional information. Chebyshev formulated the problem of finding the optimal bound, given only mean-variance information, for the tail distribution of a random variable, which his student Markov later solved using continued fractions techniques, ultimately resulting in the well-known Chebyshev inequality.
It took till the 1950s and 60s for the interest in this type of literature to be rekindled, which gave rise to the vast literature on Chebyshev systems. We refer to the monograph of \citet{karlin1966tchebycheff} for a comprehensive review.
As a prominent reason for the renewed interest in the generalized moment problem, we highlight the advent of duality theory as a novel method for solving this problem, as demonstrated in works such as \cite{isii1962sharpness,karlin1966tchebycheff,kemperman1968general}. As summarized in \cite{karlin1966tchebycheff}, \citet{marshall1960multivariate} were the first to generalize Chebyshev's inequality to the multivariate setting. Since the traditional Chebyshev inequalities only using moment information are considered too loose to be used for practical purposes, \citet{mallows1956generalizations} used structural properties to sharpen these bounds. In more recent research, the connections between moment problems, nonnegativity of polynomials, and semidefinite programming have been exploited (see, e.g., \cite{parrilo2000structured,lasserre2001global,parrilo2003semidefinite,bertsimas2005optimal,popescu2005semidefinite}). \citet{shapiro2001duality} discussed the relationship between duality results for generalized moment problems and the more general theory of conic duality, extending these concepts to more general topological vector spaces.  \citet{smith1995generalized} revisited the generalized moment problems in a contemporary discussion, highlighting its various applications in decision theory. In his work, Smith briefly mentions the setting with prior information, but notes that the resulting expectation is no longer linear in the probability measure, thus presenting a more challenging problem that is not directly amenable to the techniques discussed in his work. As a solution, \citet{smith1995generalized} suggested a linearization technique, as discussed in \cite{lavine1991sensitivity} and \cite{cozman1999calculation}, which converts the problem into a series of regular generalized moment problems.
Along the lines of \citet{shapiro2001duality}, we define a novel variant of the generalized problem of moments and use semi-infinite programming theory to obtain tight bounds in a setting where the objective function is a fractional, rather than linear, function of the probability distribution. A comprehensive overview of general semi-infinite programming theory can be found in \cite{hettich1993semi}. To the best of our knowledge, there are no general techniques available for obtaining tight bounds on conditional expectations. Although \citet{mallows1969inequalities} provided some bounds for conditional expectations for traditional power moments, these are only for the expectation of a single random variable and are not necessarily tight. Moreover, \citet{mallows1969inequalities} primarily used conditional information to sharpen Chebyshev-type tail probability bounds. In contrast, we provide a general framework for obtaining the best possible bounds for conditional expectations under limited information.

Extending the ideas described above to the decision-making setting, we are interested in obtaining tight bounds for the expectation of some objective function of random variables, where the function also contains a decision vector, in order to protect ourselves against the worst nature has to offer.
\citet{Scarf1958} was arguably the first to bring the concept of minimax optimization into the area of operations research. Scarf considered a single-item newsvendor problem where the demand distribution is not precisely known, but only characterized by limited information. The set of admissible distributions, also known as the {ambiguity set}, contained all distributions with specified mean and variance. Scarf used duality techniques to find the worst-case distribution that maximizes the total costs of the newsvendor, and then determined the order quantity such that these maximal total costs were minimized. Subsequently, generalized moment problems have been used to solve such minimax optimization problems in a vast amount of literature (see, for instance, \cite{vzavckova1966minimax,birge1987computing,birge1991bounding,dupavcova1987minimax,shapiro2002minimax,shapiro2006worst}).
Minimax optimization evolved into the field of distributionally robust stochastic optimization, or distributionally robust optimization (DRO). This term was first coined by \citet{delage2010distributionally}, and has since then become the standard terminology in the operations research community.
Distributionally robust optimization is advocated as the unifying paradigm between two distinct fields for decision-making under uncertainty: robust optimization and stochastic programming.
Robust optimization \citep{ben1998robust,ben2009robust} provides an effective way to deal with problems subject to parameter uncertainty through uncertainty sets. Although encoding uncertainty through these uncertainty sets often results in computationally tractable problems, the resulting solutions might turn out to yield overly conservative outcomes. In contrast, stochastic programming captures parameter uncertainty with full distributional information, but often yields intractable models \citep{shapiro2009lectures}. Ambiguity sets provide a powerful modeling tool for capturing distributional information about uncertain parameters in terms of their support and descriptive statistics. A significant portion of the literature on distributionally robust optimization (DRO) is moment-based, with partial information given by means, moments, and dispersion measures \citep{popescu2007robust,delage2010distributionally, wiesemann2014distributionally, hanasusanto2017ambiguous}. As adequately described in \cite{wiesemann2014distributionally}, the tractability of moment-based DRO problems relies on the intricate interplay between the objective function and the structure of the ambiguity set. Other popular ambiguity sets are based on statistical-distance measures, which restrict their members to be within a specific distance of a reference distribution. Examples of these statistical-distance measures include $\phi$-divergences and the Wasserstein distance (see, e.g., \cite{ben2013robust,mohajerin2018data}). In the present work, we focus on moment-based information.

Conditional-moment information has been used in various works to describe ambiguity sets. In the minimax stochastic programming literature, \citet{birge1987computing} incorporated such information into the constraints of generalized moment problems to bound the objective values of stochastic programming problems. \citet{de2020distributionally} restricted the distributions in the ambiguity set to polynomial density functions. They demonstrated that these ambiguity sets are highly expressive because they can conveniently accommodate distributional information about conditional probabilities, conditional moments and marginal distributions. \citet{chen2020robust} introduced scenario-wise ambiguity sets that capture information with conditional expectation constraints based on generalized moments. Although these works use conditional moment information, their objectives aim to maximize the conventional expectation. Consequently, their approaches cannot directly handle the case in which the conditional expectation is the objective.

Here we should mention some work on distributionally robust optimization in which the objective function is fractional. When only support information is available, \citet{gorissen2015robust} extended robust optimization formulations to fractional programming in which the objective function is a fraction of two functions of the uncertain parameters. \citet{liu2017distributionally} solved a DRO problem with moment constraints which consists of the maximization of ambiguous fractional functions representing reward-risk ratios. \citet{ji2021data} also investigated this class of fractional DRO problems using semi-infinite programming epigraphic formulations to solve the ambiguous reward-risk ratio problem and, additionally, design a data-driven formulation and solution framework using the Wasserstein ambiguity set.
Using the conditional information, we can use the conditional-expectation bounds to enhance “worst-case” decision-making in a DRO framework. As for this setting, there does exist a separate thrust of research named contextual distributionally robust optimization, in which the prior knowledge on the realizations of the random variables is commonly referred to as {side information}. This area of research is a natural extension of the prescriptive stochastic programming paradigm. In this paradigm, the central object of interest is the joint distribution of the side information and the outcome random variables, which, if known, would result in more accurate estimations of the outcome variable when conditioning this distribution on the side information given. In practice, however, this joint distribution is usually not known precisely and is only estimated using a finite data sample. The ultimate goal of prescriptive stochastic programming is to develop an optimization methodology that uses the available side information to improve decision-making given only limited insights into the predictive power of the side information on the uncertain outcome parameters (see, e.g., \cite{ban2019big,bertsimas2020predictive,srivastava2021data}). 
The contextual DRO modeling paradigm assumes that, next to this side information, the joint distribution is contained in an ambiguity set that is defined using the limited information available. Research on this paradigm is still relatively scarce. We highlight a number of works. \citet{esteban2022distributionally} described ambiguity through a partial mass transportation problem, and exploited probability-trimming methods to solve the contextual DRO problem.
In contrast, \citet{nguyen2021robustifying} worked directly with the optimal transport ambiguity set, and the authors succeeded in finding tractable conic reformulations for DRO problems with side information.
\citet{nguyen2020distributionally} considered a Wasserstein-ball ambiguity set \citep{mohajerin2018data}, which is centered on the empirical distribution that follows from the available data sample of the side information and the outcome parameters. Even though the Wasserstein-ball ambiguity set is a class of distributional ambiguity sets obtained through the theory of optimal transport, the models and results obtained in \cite{nguyen2021robustifying,nguyen2020distributionally} behave qualitatively differently due to special properties of the type-$\infty$ Wasserstein distance, which is used to construct the ambiguity set.
The literature described above mostly considers ambiguity sets that are defined through measures that define distances between probability distributions (such as the Wasserstein metric), whereas in this paper, we shall focus on ambiguity sets described by generalized moment information.


The main contributions of the paper may be summarized as follows.
\begin{enumerate}
    \item[1.] We expand the theory on semi-infinite programming and generalized moment problems, by deriving duality results for linear-fractional programming in the semi-infinite setting. We extend several well-known results from the theory on generalized moment problems in order to bound conditional expectations, rather than standard expectations of functions of random variables. The fact that most results carry over to this setting with conditional expectations is particularly intriguing because the conditional expectation is a nonlinear function of the probability measure (thus not amenable to standard techniques based on semi-infinite linear programming).
    \item[2.] We apply these novel results for the generalized conditional-bound problem to univariate functions of a simple random variable. Using primal-dual arguments, we obtain several closed-form bounds for different types of dispersion information. In addition to generalized moment information, we show that structural properties of the distribution can also be included. We further demonstrate our approach by resolving a minimax optimization problem, taken from the robust monopoly pricing literature. It is further asserted that most computations, in the univariate setting, are as tractable as with the linear expectations operator. 
    \item[3.]  We use findings from robust uncertainty quantification and distributionally robust convex optimization to develop conic reformulations for the multivariate problem. We then apply these reformulations to contextual DRO, presenting a generalized moment framework for distributionally robust optimization with side information. The resulting framework is designed for conditional decision-making, incorporating both the side information and the distributional information contained within the ambiguity set. The computational tractability of the reformulations turns out to be closely related to that of distributionally robust convex optimization problems with support restrictions on the random variables.
\end{enumerate}

The remainder of the paper is organized as follows. In Section~\ref{section2}, we introduce the generalized moment bound problem for conditional expectations and elaborate on the duality approach. Section~\ref{section3} discusses several examples of tight bounds for the conditional expected value of functions of a single random variable. In Section~\ref{section4}, the moment-based contextual DRO framework is presented. Most proofs of minor results are deferred to the Appendix. Finally, in Section~\ref{section5}, we conclude and provide several directions for future research. 


\section{A duality framework for generalized conditional-bound problems}\label{section2}
We first describe the problem of bounding conditional expectations in Section~\ref{sec:generalframe} and subsequently derive the associated dual problem in Section~\ref{sec:dualprob}. Then, in Section~\ref{sec:mainresults}, we provide fundamental results that will be employed in later sections to obtain the desired sharp bounds.

\subsection{Problem statement}\label{sec:generalframe}
We aim to find the best upper bound for the conditional expectation of a random vector $X$. Let us first introduce some notation. Let $\E$ denote the expectation operator, and $g(\cdot)$ denote an arbitrary measurable function of $X$. The random vector $X$ is defined on the support $\Omega\subseteq\mathbb{R}^n$, which we assume is a closed set endowed with the Borel sigma algebra $\mathcal{B}_\Omega$. The random vector $X$  is governed by a probability measure $\P:\mathcal{B}_\Omega\to[0,1]$, such that for a measurable set $\mathcal{S}\in\mathcal{B}_\Omega$ we have $\P(\mathcal{S})=\P(X\in\mathcal{S})$. Furthermore, $\P$ lies in some convex set of probability measures $\cP$. Throughout the paper, the terms “probability measure” and “probability distribution” are used interchangeably. We assume that $\event\in\mathcal{B}_\Omega$ is an arbitrary measurable event modeling the random event observed, pertaining to the realization of $X$. Let $\event$ be a set with strictly positive measure $\P(X\in\event)>0$ so that $\E[g(X) \mid \event]$ is well-defined and denotes the conditional expectation of $X$ restricted to the values in the set $\event$. We now have the necessary notation to develop an adapted version of the generalized moment problem that incorporates random events or, using different terminology, side information. The central problem in this paper can be formulated as follows:
\begin{equation}\label{eq:generalmomentproblem}
\sup_{\P\in \mathcal{M}_+(\Omega)}  \E_\P[g(X) \mid X\in\event] \quad  \text{subject to} \quad  \E_\P[h_j(X)] = q_j \text{ for } j = 0,\dots,m,
\end{equation}
where $\mathcal{M}_+(\Omega)$ denotes all nonnegative measures defined on the support $\Omega$, and $g,h_0,\dots, h_m$ are real-valued, measurable functions that model the objective function and the available (generalized) moment information. The probability mass constraint is explicitly included as $h_0 \equiv 1$ and $q_0 = 1$. Let $\cP$ denote the ambiguity set that contains the true probability distribution. 
For a given moment vector $\vec{q}$, define the set
\begin{eqnarray}\label{eq:cone}
\cP(\vec{q}):=\big\{\mathbb{P} \in \cP_0(\Omega): \int h_j(x) \mathrm{d}\mathbb{P}(x)=q_j, j=0, \ldots, m\big\},
\end{eqnarray}
which contains all probability distributions that comply with the given moment sequence. Here, we typically assume $\P$ is an element of a set of probability measures $\cP_0(\Omega)$ with support contained in $\Omega$. Thus, the constraint $h_0 \equiv 1$ is implicitly assumed so as to normalize the measures in $\mathcal{M}_+(\Omega)$ to obtain probability distributions. As a closely related concept, we consider the cone of moments $\vec{q}\in\mathbb{R}^m$ that yield a nonempty ambiguity set $\cP(\vec{q})$, which can be defined as
$$
\mathcal{Q}:=\left\{\vec{q}\in\mathbb{R}^m : \exists\P\in\mathcal{M}_+(\Omega) \text{ such that }\P\in\ \cP(\vec{q})\right\}.
$$ 
This set thus contains all  moment vectors $\vec{q}$ for which \eqref{eq:generalmomentproblem} has a solution. That is, the moment constraints are consistent or, in other words, there exists a probability distribution feasible  to the generalized moment problem. We henceforth assume that the moment constraints are consistent so that the ambiguity set $\cP$ is nonempty, and therefore a feasible solution to problem \eqref{eq:generalmomentproblem} always exists.

Now that we have introduced the required notation and studied the constraints of \eqref{eq:generalmomentproblem}, let us turn to the objective function. Instead of the regular expectation of a function of a random vector studied in generalized moment problems, $\E[g(X)]$, we now study 

\begin{equation}\label{eq:objective}
\E_\P[g(X)  \mid X\in\event] = \int_\event g(x)\, \dF{x} = \frac{\E_\P[g(X)\1_{\event}(X)]}{\E_\P[\1_{\event}(X)]}, 
\end{equation}
in which $\mathbb{Q}$ denotes the conditional probability measure, given that it exists. Notice that the objective function is a fractional, and thus nonlinear, function of the probability distribution $\P$. As a result, \eqref{eq:generalmomentproblem} belongs to the class of distributionally robust fractional optimization problems, which are fundamentally more difficult to solve than the standard problem that simply maximizes the expectation of a function (see, e.g., \cite{liu2017distributionally,ji2021data}).

\subsection{An equivalent problem and its dual}\label{sec:dualprob}
Problem \eqref{eq:generalmomentproblem} can be formulated equivalently as a semi-infinite linear-fractional program (LFP). The semi-infinite LFP reformulation of \eqref{eq:generalmomentproblem} is given by

\begin{equation}\label{eq:condmomentprob}
\begin{aligned}
&\sup_{\P(x)\geq0} & & \frac{\int_\Omega g(x)\1_{\event}(x)\Dist{x}}{\int_\Omega \1_{\event}(x)\Dist{x}} \\
&\text { s.t. } &  &\int_\Omega h_j(x) \d\P(x) = q_j,\  \forall j=0,\ldots,m,\\
&&& \P(X\in\event)>0,
\end{aligned}
\end{equation}
in which $\1_{\event}(x)$ equals 1 if $x\in\event$, and 0 otherwise. Here the optimization of the linear-fractional objective is taken over infinite-dimensional variables (i.e., the probability distribution). We further have a finite number of constraints that describe the moment information. The final constraint ensures that the conditional expectation is well-defined by avoiding conditioning on a set of measure zero.
In the finite setting, these linear-fractional programs can be reduced to linear programs through a Charnes-Cooper transformation \cite[Theorem~2]{charnes1962programming}. If we generalize this to infinite-dimensional spaces, the Charnes-Cooper transformation becomes
\begin{equation}\label{eq:charnescooper}
\begin{aligned}
 \frac{\dF{x}}{\Dist{x}} = \alpha, \text{ with } \alpha = \frac{1}{\int_\Omega \1_{\event}(x)\Dist{x}}. 
\end{aligned}
\end{equation}
In some sense, this generalized Charnes-Cooper transformation constitutes a change of measure, from the original probability measure $\P$ to its conditional counterpart $\mathbb{Q}$. The variable $\alpha$ is a scaling parameter that essentially models the normalization on the random event.

After transformation \eqref{eq:charnescooper}, problem \eqref{eq:condmomentprob} reduces to the semi-infinite linear program (LP)
\begin{equation}\label{eq:generalmomentproblem2}
\begin{aligned}
&\sup_{\alpha,\mathbb{Q}(x)\in\mathbb{R}_+} &  &\int_\Omega g(x) \1_{\event}(x)\dF{x}\\
&\text{ s.t. } &      & \int_\Omega h_j(x) \, \dF{x}=\alpha q_j,\quad \forall j=0,\ldots,m, \\  &&&  \int_\Omega\1_{\event}(x) \dF{x}=1  . 
\end{aligned}
\end{equation}
where all the right-hand sides of the constraints in \eqref{eq:generalmomentproblem2} are scaled by $\alpha$, and the last line ensures that $\mathbb{Q}$ is a proper (conditional) distribution when defined on its support $\event$. To determine the dual of \eqref{eq:generalmomentproblem2}, one can employ semi-infinite linear programming duality, as in Section~6 of \cite{hettich1993semi}. It then follows from standard calculations that the Lagrangian dual of \eqref{eq:generalmomentproblem2} is
\begin{equation}\label{eq:generaldual}
\begin{aligned}
&\inf_{\lambda_0,\dots,\lambda_{m+1}} &    & \lambda_{m+1}\\
&\text{ s.t. } &      & \sum_{j=0}^m  \lambda_j q_j \leq 0, \\
& &      & \sum_{j=0}^m \lambda_j h_j(x) + \lambda_{m+1} \1_{\event}(x)  \geq g(x)\1_{\event}(x), \ \forall x\in\Omega,
\end{aligned}
\end{equation}
where the dual variables $\lambda_0,\dots,\lambda_{m+1}$ are associated to each constraint in the primal \eqref{eq:generalmomentproblem2}. From this point on, we use the shorthand notation $h_{m+1}(x):=\1_{\event}(x)$. Notice that the dual problem has a finite number of decision variables, but an infinite number of constraints. By virtue of weak duality, an upper bound for \eqref{eq:generalmomentproblem2} follows from a feasible solution to \eqref{eq:generaldual}. The optimal dual solution to problem \eqref{eq:generaldual} yields a viable upper bound by weak duality, but whether this bound is sharp (i.e., whether strong duality holds) is still an open question, to which we will seek an answer in the next subsection. 

\subsection{Strong duality of the generalized conditional-bound problem}\label{sec:mainresults}
The purpose of this section is twofold: we demonstrate that \eqref{eq:generaldual} is strongly dual to \eqref{eq:generalmomentproblem}, also extending this result to more general, convex sets of probability measures $\cP_0$ that might model structural properties such as symmetry and unimodality, and we show that \eqref{eq:generalmomentproblem} can be reduced to a finite-dimensional problem in which one optimizes over a parametric family of distributions.

Since the objective function of \eqref{eq:generalmomentproblem} is nonlinear with respect to the expectation operator, it is not immediately clear how to pass down sufficient conditions regarding strong duality for generalized moment problems to the setting with conditional expectations.
Therefore, in order to prove our main results, we use an alternative formulation of problem \eqref{eq:condmomentprob}. The equivalence of this formulation is provided by the following lemma, which strongly hinges on the relation between fractional and parametric nonlinear programming \citep{dinkelbach1967nonlinear,gorissen2014deriving}. 
\begin{lemma}\label{lemma:equiv}
Suppose that problem \eqref{eq:condmomentprob} has a finite optimal value.  Then problem \eqref{eq:condmomentprob} is equivalent to
\begin{equation}\label{eq:altern}
\begin{aligned}
&\inf_{\tau \in  \mathbb{R}}&  & \tau \\
&\textnormal{ s.t. } & & \sup_{\P \in \mathcal{P}} \mathbb{E}_\P\left[g(X)\1_{\event}(X) -\tau \1_{\event}(X)\right] \leq 0.
\end{aligned}
\end{equation}
That is, the optimal value $\tau^*$ agrees with that of \eqref{eq:generalmomentproblem}, and the suprema in both \eqref{eq:generalmomentproblem} and \eqref{eq:altern} are achieved exactly by the same extremal distribution $\P^*$, or achieved asymptotically by an identical sequence of distributions $\{\P_k^*\}_{k\geq1}$.
\end{lemma}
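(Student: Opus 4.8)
The plan is to exploit the classical equivalence between linear-fractional and parametric optimization due to Dinkelbach \citep{dinkelbach1967nonlinear,gorissen2014deriving}, transported to the infinite-dimensional, measure-valued setting. Write $N(\P) := \mathbb{E}_\P[g(X)\1_{\event}(X)]$ for the numerator and $D(\P) := \mathbb{E}_\P[\1_{\event}(X)] = \P(X\in\event)$ for the denominator of the objective in \eqref{eq:condmomentprob}, and let $\tau^*$ denote its optimal value, which is finite by hypothesis and, by \eqref{eq:objective}, coincides with the value of \eqref{eq:generalmomentproblem}. Introduce the parametric value function
\begin{equation*}
F(\tau) := \sup_{\P\in\mathcal{P}}\mathbb{E}_\P\!\left[g(X)\1_{\event}(X)-\tau\1_{\event}(X)\right] = \sup_{\P\in\mathcal{P}}\{N(\P)-\tau D(\P)\},
\end{equation*}
so that the constraint in \eqref{eq:altern} reads $F(\tau)\leq 0$ and the claim reduces to showing that the smallest $\tau$ satisfying $F(\tau)\leq 0$ equals $\tau^*$. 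The bridge between the two formulations is the elementary equivalence $N(\P)/D(\P)\leq\tau \Longleftrightarrow N(\P)-\tau D(\P)\leq 0$, valid whenever $D(\P)>0$; the finiteness of $\tau^*$ is precisely what guarantees this parametrization is meaningful.

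First I would verify that $\tau^*$ is feasible for \eqref{eq:altern}, that is, $F(\tau^*)\leq 0$. For any $\P\in\mathcal{P}$ with $D(\P)>0$, the definition of $\tau^*$ as a supremum gives $N(\P)/D(\P)\leq\tau^*$, hence $N(\P)-\tau^* D(\P)\leq 0$; for the degenerate measures with $D(\P)=0$ the integrand $g\1_{\event}$ is supported on a $\P$-null set, so $N(\P)=0$ and $N(\P)-\tau^* D(\P)=0$ as well. Taking the supremum over $\P\in\mathcal{P}$ yields $F(\tau^*)\leq 0$. Next I would rule out any smaller feasible value: for any $\tau<\tau^*$, the definition of the supremum furnishes a feasible $\P$ with $D(\P)>0$ and $N(\P)/D(\P)>\tau$, whence $N(\P)-\tau D(\P)>0$ and therefore $F(\tau)\geq N(\P)-\tau D(\P)>0$, violating the constraint. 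Since $F$ is a supremum of affine, non-increasing functions of $\tau$ (each has slope $-D(\P)\leq 0$) and is thus itself convex and non-increasing, these two observations pin the optimal value of \eqref{eq:altern} to exactly $\tau^*$.

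It remains to match the extremal solutions. The forward direction is immediate: if $\P^*$ attains the fractional supremum then $D(\P^*)>0$ and $N(\P^*)-\tau^* D(\P^*)=D(\P^*)\bigl(N(\P^*)/D(\P^*)-\tau^*\bigr)=0=F(\tau^*)$, so the same $\P^*$ attains the inner supremum in \eqref{eq:altern}; the identical identity, combined with the bound $0\leq D(\P_k^*)\leq 1$, shows that any maximizing sequence $\{\P_k^*\}_{k\geq 1}$ for \eqref{eq:generalmomentproblem} is a maximizing sequence for \eqref{eq:altern}. Conversely, an optimizer (or maximizing sequence) for \eqref{eq:altern} recovers one for the fractional problem upon dividing by $D$, and this is where the only genuine subtlety resides: the division is legitimate only when $D(\P^*)>0$ (respectively $\liminf_k D(\P_k^*)>0$). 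The main obstacle is therefore to exclude the degenerate regime $\P(X\in\event)\to 0$. I would resolve this by observing that the optimizers of \eqref{eq:altern} are exactly the measures with $N(\P^*)-\tau^* D(\P^*)=0$; those with $D(\P^*)=0$ force $N(\P^*)=0$ and so contribute only the trivial value zero while never corresponding to admissible points of the original conditional problem, since \eqref{eq:condmomentprob} carries the explicit constraint $\P(X\in\event)>0$. Consequently the non-degenerate optimizers of \eqref{eq:altern} are in exact correspondence with the extremal distributions of \eqref{eq:generalmomentproblem}, which is the asserted equivalence.
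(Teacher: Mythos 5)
Your proof is correct and follows essentially the same Dinkelbach-type route as the paper: show that the optimal value of the fractional problem is feasible for \eqref{eq:altern}, show that no smaller $\tau$ can be feasible, and transfer optimizers or maximizing sequences between the two problems. If anything, your value-equality argument (feasibility of $\tau^*$ plus infeasibility of every $\tau<\tau^*$) is cleaner than the paper's, which instead chases a chain of limits along the two maximizing sequences and justifies one key identity somewhat loosely ``by optimality.'' The one place where your argument is thinner than you suggest is the converse transfer of solutions: for a maximizing sequence $\{\P_k^*\}$ of the inner supremum in \eqref{eq:altern} whose conditioning probabilities $\E_{\P_k^*}[\1_{\event}(X)]$ tend to $0$, the quantity $\E_{\P_k^*}[g(X)\1_{\event}(X)]-\tau^*\E_{\P_k^*}[\1_{\event}(X)]$ can vanish while the ratio stays bounded away from $\tau^*$, so such a sequence is maximizing for \eqref{eq:altern} but not for \eqref{eq:condmomentprob}; ruling out this regime genuinely requires a condition like (A2), and your observation about exact degenerate optimizers (those with $\P(X\in\event)=0$) does not cover it. Fortunately this does not damage the lemma as stated, because the claim---existence of a common extremal distribution or a common maximizing sequence---is already delivered by your forward direction; and the paper's own proof glosses over exactly the same point (it silently divides by $\E_{\P_k^*}[\1_{\event}(X)]$ when rewriting its constraint, and only later remarks that (A2) is what makes the equivalence of the two formulations go through).
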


This result is closely related to Proposition~2.1 in \cite{liu2017distributionally}, which deals in a similar fashion with a class of fractional DRO problems.
It is evident that \eqref{eq:altern} is significantly easier to work with than the original semi-infinite formulation \eqref{eq:condmomentprob} since the problem is linear with respect to $\P$, rather than linear-fractional. This can be seen from the constraints of \eqref{eq:altern} in which the expectation $\E_\P[\cdot]$ appears linearly, whereas in \eqref{eq:objective}, we have a fraction of expectation operators.
From the duality theory of moment problems, solving problem \eqref{eq:altern} turns out to be equivalent to solving the dual \eqref{eq:generaldual}. As a consequence, solving \eqref{eq:condmomentprob} will also be equivalent to solving \eqref{eq:generaldual} due to the equivalence between \eqref{eq:condmomentprob} and \eqref{eq:altern}.
We next show strong duality holds. To this end, we make the following assumptions:
\begin{enumerate}
    \item[(A1)] The function $g(x)$ is bounded on the support $\Omega$.
    \item[(A2)] There exists a positive number $\epsilon>0$ such that
    $$
    \inf_{\P\in\cP}\E_{\P}[\1_{\event}(X)] \geq \epsilon.
    $$
    \item[(A3)] The Slater condition holds; that is, $\vec{q}\in\operatorname{int}(\mathcal{Q}_{\cP})$, where “$\operatorname{int}$” denotes the interior of a set.
\end{enumerate}
\vspace{0.25cm}
Assumption (A1) is standard and could be relaxed. It is satisfied, for example, when $\Omega$ is a compact set and $g(x)$ is continuous, by virtue of Weierstrass' Extreme Value Theorem. This assumption is used to guarantee that the optimal value of \eqref{eq:condmomentprob}  is finite.
Assumption (A2) provides a sufficient condition for the conditional expectation to be well-defined, as it avoids conditioning on a set of measure zero.
Assumption (A3) constitutes a Slater-type condition on the moments of $X$, which is standard for generalized moment problems; see, for example, Proposition~3.4 in \cite{shapiro2001duality}. 
Under these regularity conditions, we show strong duality holds for a general, convex set of probability distributions $\cP_0$, possibly endowed with structural properties (e.g., symmetry and unimodality). 
We will focus on structural ambiguity sets $\cP_0(\Omega)$ that possess a mixture representation. In other words, we assume $\cP_0$ can be “generated” by a convenient class of distributions, say $\mathcal{T}$, such that every distribution $\mathbb{P}\in\cP_0$ can be written as a mixture (i.e., an infinite convex combination) of the extremal distributions (i.e., the extreme points of the convex set $\cP_0$) that constitute $\mathcal{T}$. For every Borel set $\mathcal{S} \in \mathcal{B}_\Omega$, it should thus hold that
$$
\mathbb{P}(X\in\mathcal{S})=\int_{\mathcal{T}} \mathbb{T}_\vec{t}(X\in\mathcal{S}) \mathrm{d}\mathbb{M}(\vec{t}),
$$
where $\mathbb{T}_\vec{t} \in \mathcal{T} \subseteq \cP_0$, is a parameterized representation of the family of extremal distributions of $\mathcal{P}_0$, and $\mathbb{M}$ represents the mixture distribution that generates $\mathbb{P}$ from the extremal distributions in $\mathcal{T}$. This finite-dimensional parameterization of the family of extremal distribution will prove useful when determining the optimal bounds. For a thorough discussion on these structural ambiguity sets in the context of DRO, we refer to the work of \citet{popescu2005semidefinite}. We use these general ambiguity sets with structural properties when formulating our main results.

Finally, before formulating our main result, we introduce some final technical notation from conic duality theory for generalized moment problems (see, e.g., \cite{shapiro2001duality,popescu2005semidefinite}). Denote by $\mathcal{A}=\operatorname{co}(\mathcal{P}_0)$ the cone of measures $\mathcal{A}$ generated by the set of probability distributions $\mathcal{P}_0$. Define it dual cone as $\mathcal{A}^*:=\{h \in \mathcal{H} : \int_\Omega h(x) \, \rm{d}\P(x) \geq 0,\  \forall \P \in \mathcal{A}\},$ where $\mathcal{H}$ is the linear space of functions formed by combinations of $g,h_0,\dots, h_m$, and the spaces of functions and measures are paired by the integral operator. We now have the necessary preliminaries to demonstrate strong (conic) duality. Lemma~\ref{lemma:equiv}, in conjunction with assumptions (A1)--(A3), pave the way for us to formulate the main results. 

\begin{theorem}[Strong conic duality]\label{thm1}
Suppose that assumptions \textnormal{(A1)--(A3)} hold. Then, the optimal value of the primal problem \eqref{eq:generalmomentproblem} is finite and equals that of its dual problem
\begin{equation}\label{eq:moregeneraldual}
\begin{aligned}
&\inf_{\lambda_0,\dots,\lambda_{m+1}} &    & \lambda_{m+1}\\
& \textnormal{ s.t. } &      & \sum_{j=0}^m  \lambda_j q_j \leq 0, \\
& &      & \sum_{j=0}^{m+1} \lambda_j h_j(x) - g(x) \1_{\event}(x)   \in \mathcal{A}^{*},
\end{aligned}
\end{equation}
in which $\mathcal{A}^{*}$ is the dual cone of $\mathcal{A}$.
\end{theorem}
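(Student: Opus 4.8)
The plan is to use Lemma~\ref{lemma:equiv} to strip the fractional structure from \eqref{eq:generalmomentproblem}, after which the remaining feasibility condition is an ordinary (linear-in-$\P$) generalized moment problem to which the classical conic moment duality of \citet{shapiro2001duality} and \citet{popescu2005semidefinite} applies. Concretely, I would first invoke Lemma~\ref{lemma:equiv} to replace the fractional problem \eqref{eq:condmomentprob}---equivalently \eqref{eq:generalmomentproblem}---by the parametric formulation \eqref{eq:altern},
\[
\inf_{\tau\in\mathbb{R}}\ \tau \quad\text{s.t.}\quad \sup_{\P\in\cP(\vec{q})}\E_\P\!\big[(g(X)-\tau)\1_{\event}(X)\big]\le 0 .
\]
Assumption (A1) makes the inner objective bounded above (as $g$ is bounded and $\1_{\event}\le 1$), which both furnishes the finiteness hypothesis required by Lemma~\ref{lemma:equiv} and yields the finiteness claim of the theorem; assumption (A2) guarantees the conditional expectation in \eqref{eq:objective} is well-defined, so that the reduction is legitimate.

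Second, for each fixed $\tau$ the inner supremum is a standard generalized moment problem with the linear objective $c_\tau(x):=(g(x)-\tau)\1_{\event}(x)$, optimized over the convex, structural set $\cP_0(\Omega)$ subject to $\E_\P[h_j(X)]=q_j$. Dualizing only the moment equalities over the cone of measures $\mathcal{A}=\operatorname{co}(\cP_0)$ with dual cone $\mathcal{A}^*$, strong duality gives
\[
\sup_{\P\in\cP(\vec{q})}\E_\P[c_\tau(X)]=\inf_{\lambda_0,\dots,\lambda_m}\ \sum_{j=0}^m\lambda_j q_j \quad\text{s.t.}\quad \sum_{j=0}^m\lambda_j h_j-c_\tau\in\mathcal{A}^*,
\]
with no duality gap. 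The decisive ingredient is the Slater condition (A3), $\vec{q}\in\operatorname{int}(\mathcal{Q}_{\cP})$, which is precisely the regularity hypothesis that closes the conic moment-duality gap (cf.\ Proposition~3.4 in \citet{shapiro2001duality}); since the moment constraints entering the inner problem are exactly the original ones and $\tau$ enters only the objective, (A3) applies verbatim and uniformly in $\tau$.

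Third, I would recombine the two optimizations. Writing $h_{m+1}:=\1_{\event}$ and identifying $\lambda_{m+1}=\tau$, the term $\tau\1_{\event}$ in $c_\tau$ is absorbed, so the inner dual constraint becomes $\sum_{j=0}^{m+1}\lambda_j h_j-g\,\1_{\event}\in\mathcal{A}^*$. The feasibility requirement $\sup_\P\E_\P[c_\tau]\le 0$ then amounts to the existence of $\lambda_0,\dots,\lambda_m$ with $\sum_{j=0}^m\lambda_j q_j\le 0$ satisfying this cone constraint; merging the outer $\inf_\tau$ with the inner $\inf_\lambda$ delivers exactly \eqref{eq:moregeneraldual}, and the chain of equalities transfers the no-gap property from the inner problem to the full fractional problem.

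The main obstacle is the second step: justifying strong conic duality for the inner problem over the \emph{structural} set $\cP_0$ rather than over all probability measures. Two points need care. First, one must confirm that the relevant dual object is the dual cone $\mathcal{A}^*$ of $\mathcal{A}=\operatorname{co}(\cP_0)$ and that the structural restrictions (e.g.\ symmetry, unimodality) are faithfully encoded through the mixture representation $\P(\cdot)=\int_{\mathcal{T}}\mathbb{T}_{\vec{t}}(\cdot)\,\mathrm{d}\mathbb{M}(\vec{t})$, so that the single conic constraint simultaneously enforces moment consistency and membership in $\cP_0$. Second, one must verify that the Slater point remains interior relative to this smaller feasible set and that the duality argument is uniform in $\tau$ over the relevant range. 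A minor technicality is that Lemma~\ref{lemma:equiv} guarantees equality of the \emph{optimal values} (with extremal distributions either attained or approached along a common sequence), so no separate attainment argument is needed to conclude the stated equality.
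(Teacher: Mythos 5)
Your proposal is correct and takes essentially the same route as the paper's own proof: both reduce the fractional problem to the linearized formulation \eqref{eq:altern} via Lemma~\ref{lemma:equiv} (with (A1)--(A2) securing finiteness), apply standard conic duality for generalized moment problems under the Slater condition (A3) as in \citet{shapiro2001duality} and Theorem~3.1 of \citet{popescu2005semidefinite} for structural sets $\cP_0$, and then identify $\tau$ with $\lambda_{m+1}$ so that the dual of the inner supremum is absorbed into the constraint, yielding \eqref{eq:moregeneraldual}. Your write-up merely spells out the fixed-$\tau$ inner dual and the merging step that the paper compresses into a single sentence.
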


\begin{proof}
The assumptions ensure that, for all $\P\in\cP$,
$$
\begin{aligned}
\left|\frac{\E_\P[g(X)\1_{\event}(X)]}{\E_{\P}[\1_{\event}(X)]}\right| &\leq \frac{1}{\epsilon}|\E_\P[g(X)]| \\ &\leq \sup_{\P\in\cP}\frac{1}{\epsilon}|\E_\P[g(X)]| \\ &\leq \frac{1}{\epsilon}\sup_{x\in\Omega}|g(x)| < \infty.\\
\end{aligned}
$$
Hence, instead of \eqref{eq:generalmomentproblem}, it is equivalent to consider \eqref{eq:altern}.
Since the constraints of this problem are linear in the probability distribution $\P$, standard conic duality for generalized moment problems suffices, which holds under the Slater-type condition, as defined in \cite{shapiro2001duality}. Notice that if we substitute $\tau$ with $\lambda_3$ and incorporate the dual problem in the constraint of \eqref{eq:altern}, then \eqref{eq:altern} is equivalent to \eqref{eq:moregeneraldual}. For $\mathcal{A} = \mathcal{M}_+$, the strong conic dual problem \eqref{eq:moregeneraldual} reduces to the semi-infinite LP \eqref{eq:generaldual}.
The result for general cones of measures follows from conic duality arguments as in Theorem~3.1 of \cite{popescu2005semidefinite}. 
\end{proof}

As a consequence, duality enables us to reduce the primal problem, which has infinite-dimensional variables, to a dual problem with $m+1$ variables, but with an infinite number of constraints. These constraints are indexed by the probability measures, i.e., the constraints should hold $\forall\P\in\mathcal{P}_0$. This indexation might turn out to be difficult. However, this difficulty can be greatly reduced if we instead use the generating set $\mathcal{T}$, as shown in \cite{popescu2005semidefinite}. In this case, the dual cone can be reduced to $\mathcal{A}^*=\{h \in \mathcal{H} : \int_\Omega h(x) \  {\rm d}\P(x) \geq 0,\  \forall \P \in \mathcal{T}\}$, and hence, the indexing now only runs over the set of extreme points of $\cP_0$.

Another classical result states that the semi-infinite LP that models a generalized moment problem can be reduced to an equivalent finite-dimensional problem with the same optimal value.
The Richter-Rogosinski Theorem (see, e.g.,~\cite{rogosinski1958moments,smith1995generalized,han2015convex})
states that there exists an extremal distribution for the semi-infinite LP with at most $m+1$ support points. Analogous to the basic solutions for conventional semi-infinite linear programming, we define a basic distribution as a convex combination of extremal distributions of $\cP_0$. As there are $m+1$ moment functions, these basic distributions consist of at most $m+1$ extreme points (i.e., elements of $\mathcal{T}$). We let $\mathcal{D}(\vec{q})$ denote the set of basic distributions that comply with the given moment sequence $\vec{q}$. We can then state the following result, which is an adaptation of the fundamental theorem for convex classes of distributions, but now for the generalized conditional-bound problem.

\begin{theorem}[Fundamental theorem for conditional expectations]\label{thm2}
Consider problem \eqref{eq:generalmomentproblem}. Under assumptions \textnormal{(A1)--(A3)}, 
$$
\sup _{\P \in \cP(\vec{q})} \E_\P[g(X)\mid X\in \event]=\sup _{\P \in \mathcal{D}(\vec{q})} \E_\P[g(X)\mid X\in \event].
$$
Moreover, if the optimal value is attained, then there exists an optimal basic distribution, a convex combination of $m+1$ probability distributions from the generating set $\mathcal{T}$, that achieves this value.
\end{theorem}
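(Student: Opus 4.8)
The inequality $\sup_{\P \in \mathcal{D}(\vec{q})} \E_\P[g(X)\mid X\in\event] \le \sup_{\P \in \cP(\vec{q})} \E_\P[g(X)\mid X\in\event]$ is immediate, since every basic distribution is itself admissible, i.e. $\mathcal{D}(\vec{q}) \subseteq \cP(\vec{q})$. The plan is therefore to establish the reverse inequality, and the obstacle is exactly the one flagged throughout Section~\ref{sec:mainresults}: the conditional expectation is fractional in $\P$, so the Richter--Rogosinski reduction cannot be applied to it directly. I would circumvent this by linearizing through the parametric reformulation of Lemma~\ref{lemma:equiv} and then invoking the classical fundamental theorem for the resulting \emph{linear} moment problems, one parameter value at a time.

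Concretely, for $\tau \in \mathbb{R}$ I would introduce the two linear moment problems
\[
F(\tau) := \sup_{\P \in \cP(\vec{q})} \E_\P\big[(g(X)-\tau)\1_{\event}(X)\big], \qquad F_D(\tau) := \sup_{\P \in \mathcal{D}(\vec{q})} \E_\P\big[(g(X)-\tau)\1_{\event}(X)\big].
\]
Assumptions (A1)--(A2) guarantee $\E_\P[\1_{\event}(X)] \ge \epsilon > 0$ and that $\E_\P[g(X)\1_{\event}(X)]$ is bounded for every $\P \in \cP(\vec{q})$, and since $\mathcal{D}(\vec{q}) \subseteq \cP(\vec{q})$ the same bounds persist on $\mathcal{D}(\vec{q})$. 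Both conditional expectations are thus finite, so the Dinkelbach equivalence underlying Lemma~\ref{lemma:equiv} applies to each feasible set separately; the argument of that lemma uses only positivity of the denominator, not convexity of the feasible set. This gives $\sup_{\P \in \cP(\vec{q})} \E_\P[g\mid\event] = \inf\{\tau : F(\tau) \le 0\}$ and likewise $\sup_{\P \in \mathcal{D}(\vec{q})} \E_\P[g\mid\event] = \inf\{\tau : F_D(\tau) \le 0\}$.

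The crux is then to compare $F$ and $F_D$. For each fixed $\tau$ the integrand $(g-\tau)\1_{\event}$ is a single bounded measurable function, while the feasible set is cut out by the $m+1$ moment conditions $\E_\P[h_j]=q_j$, $j=0,\dots,m$. The classical fundamental theorem for convex classes of distributions---the Richter--Rogosinski reduction in the structural form recalled just before the statement---therefore applies verbatim to the linear problem $F(\tau)$: its value is unchanged when the supremum is restricted to basic distributions that match $\vec{q}$ and are mixtures of at most $m+1$ elements of $\mathcal{T}$, so that $F(\tau) = F_D(\tau)$ for every $\tau$. Consequently the sublevel sets $\{\tau : F(\tau)\le 0\}$ and $\{\tau : F_D(\tau)\le 0\}$ coincide, their infima agree, and the claimed equality of the two conditional-expectation suprema follows with no appeal to attainment.

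For the ``moreover'' part, write $V$ for the common value and suppose it is achieved by some $\P^\ast \in \cP(\vec{q})$. By the attainment clause of Lemma~\ref{lemma:equiv}, $\P^\ast$ then also solves the inner problem at $\tau = V$, so $F(V)=0$ is attained; applying the attainment part of the fundamental theorem to this solvable linear problem produces a basic distribution $\P^\ast_D \in \mathcal{D}(\vec{q})$, a convex combination of at most $m+1$ elements of $\mathcal{T}$, with $\E_{\P^\ast_D}[(g-V)\1_{\event}] = 0$; dividing by $\E_{\P^\ast_D}[\1_{\event}] \ge \epsilon > 0$ yields $\E_{\P^\ast_D}[g\mid\event] = V$, the desired optimal basic distribution. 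I expect the main subtlety to be precisely this transfer of optimality across the linearization. Two points need care: that the $m+1$-point cardinality bound is preserved---which holds because $\tau$ enters only as a scalar shift of the objective rather than as an additional moment constraint, in contrast to the Charnes--Cooper route of \eqref{eq:generalmomentproblem2}, whose extra normalization constraint would inflate the count to $m+2$---and that the final division by the denominator is legitimate, which is exactly what Assumption (A2) secures.
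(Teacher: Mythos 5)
Your proposal is correct and follows essentially the same route as the paper: linearize the fractional objective via the parametric (Dinkelbach) reformulation of Lemma~\ref{lemma:equiv}, then invoke the classical Richter--Rogosinski/fundamental theorem (Theorem~6.1 of \cite{popescu2005semidefinite}) for the resulting linear moment problem, and transfer attainment back through the positive denominator guaranteed by (A2). Your write-up is a more explicit rendering of the paper's terse argument---in particular, the pointwise-in-$\tau$ identity $F(\tau)=F_D(\tau)$, the observation that the Dinkelbach equivalence needs only positive denominators (not convexity of $\mathcal{D}(\vec{q})$), and the remark that the parametric route preserves the $m+1$ cardinality bound where the Charnes--Cooper route would not---but the underlying proof is the same.
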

\begin{proof}
    Once again, under the stated assumptions, it suffices to consider the equivalent problem \eqref{eq:altern}. Since the constraints of \eqref{eq:altern} are linear in $\P$, standard generalized moment problem results apply. Hence, Theorem~6.1 in \cite{popescu2005semidefinite} is applicable and yields the desired result. This is underpinned by Lemma~\ref{lemma:equiv}, which establishes that the suprema in both \eqref{eq:generalmomentproblem} and \eqref{eq:altern} are attained either by the same extremal distribution or asymptotically achieved by an identical sequence of distributions.
\end{proof}
This theorem states that, even if the bound is not achieved, it is sufficient to consider only the basic feasible distributions in $\mathcal{D}(\vec{q})$ to determine the supremum. This result holds for general convex classes of distributions $\cP$ with the optimal distributions taken as convex combinations of the extremal distributions in $\mathcal{T}$.
We further remark that both theorems also hold even when the supremum of the conditional expectation grows infinitely large. In such cases, there exists a maximizing sequence of probability measures (also taken from the set of basic distributions) for which the conditional expectation diverges. The proof of Lemma~\ref{lemma:equiv} demonstrates that (A2) is sufficient for the equivalence of the two formulations. Assumption (A1) primarily ensures the finiteness of the optimal value. Without (A1) the primal problem may lead to an unbounded solution, rendering the dual problem infeasible by weak duality.
Combined, the concept of weak duality described in Section~\ref{sec:dualprob} and the reduction to the basic distributions generated by $\mathcal{T}$ as proposed in Theorem~\ref{thm2} provide an effective way of solving problem \eqref{eq:generalmomentproblem}, as will be demonstrated in the next section.

\section{Tight bounds for conditional expectations}\label{section3}
In this section, we study several easy examples for the case with $n=1$; that is, $X$ is a random variable conditioned on itself. First, in Section~\ref{sec:meandisp}, we seek the best possible bounds for conditional expectation $\E[X \mid X\geq t]$ when mean-dispersion information, and possibly structural properties of the underlying distribution, are given. We find the tight bounds using primal-dual arguments. In Section~\ref{sec:pricing}, we demonstrate this also works for arbitrary choices of $g(x)$, using an example from the robust pricing literature. Finally, Section~\ref{sec:numericalbounds} shows that, in the univariate setting, tight bounds can be obtained by solving semidefinite programming problems.

\subsection{Simple examples for mean-dispersion information}\label{sec:meandisp}
For the sake of exposition, we concentrate our efforts on the event $\event=\{X\geq t\}$. We thus seek to bound the conditional expectation
$$
\E_\P[g(X) \mid X\geq t]= \frac{\int_\Omega g(x) \1_{\event}(x)\Dist{x}}{\int_\Omega \1_{\event}(x)\Dist{x}},
$$
in which $\1_{\event}(x)$ is the indicator function modeling the occurrence of the event $\{X\geq t\}$ and $\P$ is the underlying probability distribution of which we assume that it lies in the mean-variance ambiguity set, $\cP_{(\mu,\sigma)}$, which contains all distributions that comply with the available mean-variance information. Then, the problem of interest can be stated as 
\begin{equation}\label{eq:primalfractional}
\begin{aligned}
&\sup_{\P(x)\geq0} &  &\frac{\int_\Omega g(x)\1_{\event}(x)\Dist{x}}{\int_\Omega \1_{\event}(x)\Dist{x}}\\
&\text{s.t.} &      & \int_\Omega \Dist{x}=1, \int_\Omega x\, \Dist{x}=\mu,\ \int_\Omega x^2 \, \Dist{x}=(\sigma^2+\mu^2),   
\end{aligned}
\end{equation}
which is a semi-infinite LFP. Through the generalized Charnes-Cooper transformation, introduced in Section~\ref{sec:generalframe},
it is possible to write \eqref{eq:primalfractional} as 
\begin{equation}\label{eq:primallp}
\begin{aligned}
&\sup_{\alpha,\mathbb{Q}(x)\geq0} &  &\int_\Omega g(x) \1_{\event}(x)\dF{x}\\
&\text{  s.t.} &      & \int_\Omega \dF{x}=\alpha, \int_\Omega x \, \dF{x}=\alpha\mu,\ \int_\Omega x^2 \, \dF{x}=\alpha(\sigma^2+\mu^2), \int_\Omega\1_{\event}(x) \dF{x}=1,   
\end{aligned}
\end{equation}
where $\alpha$ is a decision variable denoting the ``worst-case'' probability of $X$ exceeding $t$. The semi-infinite linear programming dual of \eqref{eq:primallp} is given by
\begin{equation}\label{eq:dual}
\begin{aligned}
&\inf_{\lambda_0,\lambda_1, \lambda_2, \lambda_3} &  & \lambda_3\\
&\text{  s.t. } &      & \lambda_0 + \lambda_1 \mu + \lambda_2(\sigma^2+\mu^2)\leq 0, \\
& &      & \lambda_0+\lambda_1 x+\lambda_2 x^2+\lambda_3 \1_{\event}(x) \geq g(x)\1_{\event}(x), \ \forall x\in\mathbb{R},
\end{aligned}
\end{equation}
which, for this specific choice of random event $\event$, results in
$$
\begin{aligned}
&\inf_{\lambda_0,\lambda_1, \lambda_2, \lambda_3} &  & \lambda_3 &\\
&\text{  s.t. } &      & \lambda_0 + \lambda_1 \mu + \lambda_2(\sigma^2+\mu^2)\leq 0, &\\
& &      & \lambda_0+\lambda_1 x+\lambda_2 x^2 \geq 0, \ &\forall x<t, \\
& &      & \lambda_0+\lambda_1 x+\lambda_2 x^2+\lambda_3  \geq g(x), \ &\forall x\geq t. \\
\end{aligned}
$$
Consider the standard conditional expectation (i.e., consider the function $g: x\mapsto x$). We next try to find feasible solutions to the dual problem, and prove optimality by finding primal solutions with matching objective values. Let $t<\mu$. The dual problem of $\sup_{\mathbb{P}\in\mathcal{P}_{(\mu,\sigma)}}\E[X\mid X\geq t]$ is given by
\begin{equation}
\begin{aligned}
&\inf_{\lambda_0,\lambda_1, \lambda_2, \lambda_3} &  & \lambda_3 &\\
&\text{  s.t. } &      & \lambda_0 + \lambda_1 \mu + \lambda_2 (\sigma^2+\mu^2)\leq 0, &\\
& &      & \lambda_0+\lambda_1 x+\lambda_2 x^2 \geq 0, \ &\forall  x<t, \\
& &      & \lambda_0+\lambda_1 x+\lambda_2 x^2   \geq x - \lambda_3, \ &\forall x\geq t. \\
\end{aligned}
\end{equation}

Denote the left-hand sides of the constraints by $M(x)\coloneqq\lambda_0+\lambda_1 x +\lambda_2 x^2$. The function $M(x)$ is dual feasible when it is greater than or equal to 0 for $x<t$ and greater than or equal to $x-\lambda_3$ for $t\leq x \leq b$.

\begin{figure}[h!]
\begin{center}
\begin{tikzpicture}[scale=1,
    declare function={identityx(\x)=0*(\x<=2.5)+(\x-2.5)*(\x>=2.5);}
    ]
    \draw[-latex,thick] (-0.5, 0) -- (6, 0) node[right] {\footnotesize$x$};
    \draw[-latex,thick] (0, -2) -- (0, 3);
    \draw[blue,thick] (2,3) -- (2.7,3) node[right] {\footnotesize$\1_{\{x\geq t\}}(x-\lambda_3)$};
    \draw[dashed] (2,2.5) -- (2.7,2.5) node[right] {\footnotesize$M_1(x)$};
    \draw[dash dot] (2,2.0) -- (2.7,2.0) node[right] {\footnotesize$M_2(x)$};
    \draw[scale=1, domain=0:2, smooth, variable=\x, blue, thick]  plot ({\x}, {0});
    \draw[densely dashed, blue] (2,0)--(2,-2);
    \draw[densely dashed, blue] (2,-2)--(2,-2) node[left]{\footnotesize$t-\lambda_3$};
    \draw[blue] (2,0) circle (1.5pt);
    \filldraw[blue] (2,-2) circle (1.5pt);
    \draw[blue, thick] (2,-2)--(6,2);
    \draw[dotted] (3,0)--(3,0)  node[below left]{\footnotesize$\mu$};
    \draw[dotted] (2,0)--(2,0)  node[below left]{\footnotesize$t^-$};
    \draw[dotted] (3.5,-.5)--(3.5,0)  node[above]{\footnotesize$x_0$};
    \filldraw[black] (3.5,-.5) circle (1.5pt);
    \draw[scale=1, domain=0:5, smooth, variable=\x, dashed]  plot ({\x}, {0.11*(\x-1)*(\x-1)});
    \draw[scale=1, domain=1:5, smooth, variable=\x, dash dot]  plot ({\x}, {0.888889 * (\x)^2 - 5.22222*\x + 6.88889});
\end{tikzpicture}
\end{center}
\centering
\caption{$M_1(x)$ and $M_2(x)$}\label{fig:majorsE[X|X>t]meanvariance}
\end{figure}
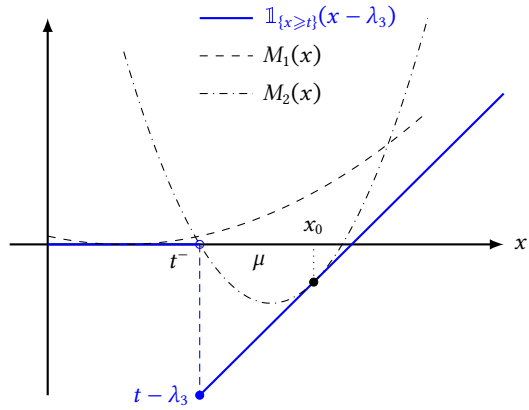

The first dual function, $M_1(x)$, does not admit a feasible solution. Since $\sigma>0$, from $\lambda_0 + \lambda_1\mu + \lambda_2(\sigma^2+\mu^2) \leq 0$ it follows that $M(\mu)=\lambda_0 + \lambda_1\mu + \lambda_2 \mu^2<0$, but for the minimizer $x^*=-\lambda_1/(2\lambda_2)$, $M(x^*)\geq0$. Thus, this case is infeasible. 

The second parabola, $M_2(x)$, does admit a feasible solution. This function coincides with the function $g(\cdot)$ at $t$ and some point $x_0$. Since $M_2(t)=0$, $M_2(x_0)=x_0-\lambda_3$ and $M^{'}_2(x_0)=1$,
\begin{equation}\label{eq:lambdas}
\lambda_0=-\frac{t(\lambda_3(t-2x_0)+x_0^2)}{(t-x_0)^2} ,\ \lambda_1=\frac{t^2+x_0(x_0-2\lambda_3)}{(t-x_0)^2},\ \lambda_2 = \frac{\lambda_3-t}{(t-x_0)^2}.
\end{equation}
Hence, this yields an optimization problem in two variables, $\min_{x_0,\lambda_3} \lambda_3$, with the additional constraint $\lambda_0 + \lambda_1\mu + \lambda_2(\sigma^2+\mu^2) \leq 0$, in which we substitute \eqref{eq:lambdas}. If this constraint is tight, the dual problem can be reduced to
$$
\min_{\lambda_3,x_0}\lambda_3 \equiv \min_{x_0} \frac{(t^2 + x_0^2)\mu - t (x_0^2+\mu^2+\sigma^2)}{(t-\mu)(t-2x_0+\mu)-\sigma^2}.
$$
Optimizing over $x_0$, it then follows that 
$$
x^*_0 = \mu + \frac{\sigma^2}{\mu-t},\ \lambda_3^* = \mu + \frac{\sigma^2}{\mu-t},
$$
with $\lambda_3^*$ a feasible upper bound for $\E[X|X\geq t]$. To prove this bound is optimal, we construct a distribution that (asymptotically) achieves $\lambda_3^*$. From complementary slackness (see, e.g., \cite{smith1995generalized}), we deduce that the candidate distribution has all of its probability mass on two points: $t^-$ and $x_0^*$. Solving the system of moment constraints in \eqref{eq:primalfractional} yields the probabilities
$$
p_{t^-}= \frac{\sigma^2}{\sigma^2 + (\mu-t)^2},\ p_{x_0^*} =  \frac{(\mu-t)^2}{\sigma^2 + (\mu-t)^2}.
$$
Indeed, for this two-point distribution,
$$
\E[X \mid X\geq t] =  \frac{x_0^*\cdot p_{x_0^*}}{\P(X\geq t)} = \frac{x_0^*\cdot p_{x_0^*}}{p_{x_0^*}} = x_0^* = \mu + \frac{\sigma^2}{\mu-t},
$$
ensuring the upper bound is tight. Hence, by weak duality,
\begin{equation}
    \sup_{\mathbb{P}\in\mathcal{P}_{(\mu,\sigma)}} \E[X \mid X\geq t] = \mu + \frac{\sigma^2}{\mu-t}.
\end{equation}
Notice that this bound is not actually attained, but it is achieved in the limit; that is, for $t_k=t-\frac1k$, as $k\to\infty$, this construction of the extremal distribution indeed gives the desired result.
For $t\geq\mu$, it can be shown that
$$
\sup_{\mathbb{P}\in\mathcal{P}_{(\mu,\sigma)}} \E[X \mid X\geq t] = \infty,
$$
since we can construct the following sequence of (maximizing) measures: 
$$
\P_k = \frac{1}{k^2\sigma^2 + 1}\delta_{\mu+k\sigma^2} + \frac{\sigma^2}{\sigma^2 + \frac{1}{k^2}}\delta_{\mu-\frac1k}.
$$
Letting $k\to\infty$ then results in $\E_{\P_k}[X\mid X\geq t]\to\infty$. Taken together, we obtain the following result.

\begin{proposition}\label{prop:meanvar}
For a real-valued random variable $X$ with distribution $\P\in\mathcal{P}_{(\mu,\sigma)}$,  it holds that
    \begin{equation}
    \sup_{\mathbb{P}\in\mathcal{P}_{(\mu,\sigma)}} \E_\P[X \mid X\geq t] =\begin{cases}
    \mu + \frac{\sigma^2}{\mu-t},\quad &\textnormal{for } t<\mu, \\
    \infty, &\textnormal{for } t\geq\mu.
    \end{cases}
\end{equation}
\end{proposition}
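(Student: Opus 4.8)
The plan is to prove both cases through a primal--dual matching argument that leverages the strong duality established in Theorem~\ref{thm1}. Specializing that result to $g(x)=x$ and $\event=\{X\ge t\}$ reduces the fractional primal \eqref{eq:primalfractional} to the semi-infinite dual \eqref{eq:dual}. For the regime $t<\mu$ I would produce a feasible dual certificate equal to $\mu+\sigma^2/(\mu-t)$ and a (limiting) primal law attaining the same value, so that weak duality forces equality; for $t\ge\mu$ I would instead exhibit an explicit maximizing sequence whose conditional mean diverges.

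For the upper bound, the dual asks for a quadratic $M(x)=\lambda_0+\lambda_1x+\lambda_2x^2$ that majorizes $0$ on $\{x<t\}$ and $x-\lambda_3$ on $\{x\ge t\}$, subject to $\lambda_0+\lambda_1\mu+\lambda_2(\sigma^2+\mu^2)\le0$. The first step is to discard the configuration in which $M\ge0$ everywhere: since $\sigma>0$ and feasibility forces the leading coefficient $\lambda_2>0$, rewriting the moment inequality as $M(\mu)+\lambda_2\sigma^2\le0$ gives $M(\mu)<0$, incompatible with a globally nonnegative parabola. Consequently a feasible majorant must cross zero, forcing the geometry in which $M$ touches the level $0$ at $x=t$ while remaining tangent to the line $x-\lambda_3$ at some point $x_0\ge t$. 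Encoding the three conditions $M(t)=0$, $M(x_0)=x_0-\lambda_3$ and $M'(x_0)=1$ yields the coefficient formulas \eqref{eq:lambdas}. Substituting these into the moment constraint, taken to be active, reduces the dual to a one-dimensional minimization of $\lambda_3$ over $x_0$; carrying out the stationarity condition gives $x_0^*=\lambda_3^*=\mu+\sigma^2/(\mu-t)$, which is the asserted value.

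For tightness I would invoke complementary slackness to locate the optimal primal support at the two points where the dual constraints bind, namely $t^-$ and $x_0^*$, and then solve the mean and variance equations in \eqref{eq:primalfractional} together with the mass-one normalization for the corresponding weights $p_{t^-}$ and $p_{x_0^*}$. A direct evaluation shows this two-point law satisfies $\E[X\mid X\ge t]=x_0^*$, matching the dual bound. Because only the atom $x_0^*\ge t$ lies in the event, the lower atom must sit just below the threshold; realizing this rigorously means replacing $t$ by $t_k=t-1/k$ and letting $k\to\infty$, so the bound is achieved in the limit rather than exactly. By Theorem~\ref{thm2} it suffices to optimize over such two-point basic distributions, so no admissible law exceeds $\lambda_3^*$, closing the duality gap. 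For the remaining case $t\ge\mu$, I would construct a sequence of two-point measures keeping the mean at $\mu$ and variance at $\sigma^2$ while sending one atom to $+\infty$ with vanishing mass and the other toward $\mu$ from below; since only the escaping atom satisfies $X\ge t$ for large $k$, the conditional mean equals that atom's location and hence diverges.

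The step I expect to be the main obstacle is handling the non-attainment at $t<\mu$. The extremal primal distribution wants to place mass at $t^-$, a point strictly below the conditioning threshold, so the supremum is not realized by any single feasible measure and must be approached along a sequence. The delicate point is that this boundary atom contributes to the first two moments but not to the conditional numerator over $\{X\ge t\}$; one must verify that the limiting construction simultaneously respects both moment constraints and the positivity requirement (A2), ensuring the conditional expectation stays well-defined along the sequence while converging to the dual optimum.
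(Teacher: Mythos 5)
Your proposal is correct and mirrors the paper's own argument essentially step for step: the same dual geometry (ruling out the globally nonnegative parabola via $M(\mu)<0$, then the tangent configuration with $M(t)=0$, $M(x_0)=x_0-\lambda_3$, $M'(x_0)=1$), the same complementary-slackness construction of the two-point law on $\{t^-,x_0^*\}$ with the limiting sequence $t_k=t-1/k$, and the same escaping-atom sequence for $t\ge\mu$. No substantive differences to report.
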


A number of interesting observations can be drawn from this result. First, note that the maximizing sequence for the second case, $\{\P_k\}$, converges weakly to $\delta_{\mu}$, which is \emph{not} included in the ambiguity set. 
Notice also that the solution to the second case becomes ``uninformative,'' as it diverges for values of $t\geq\mu$.
Degenerate behavior like this also holds for different ambiguity sets, as we will see in later examples. 
This result confirms tightness of the Mallows and Richter bound for conditional expectations under mean-variance information, stated in \cite{mallows1969inequalities}. Further, notice that the worst-case distribution that achieves the upper bound matches the extremal distribution that yields the Cantelli lower bound for the tail probability, as also shown in \cite{giannakopoulos2020robust}. The authors of the latter work provide a constructive proof of tightness using the extremal distribution that achieves the Cantelli bound. Despite Proposition~\ref{prop:meanvar} being a known result, this is the first time it has been proven through a duality argument that provides immediate insight into the extremal distributions.

The method discussed above can be applied to different types of dispersion information, not only the traditional variance. For example, assume that instead of the variance, we consider the mean absolute deviation from the mean (MAD), $d\coloneqq\E|X-\mu|$, as the measure of dispersion. Let $\mathcal{P}_{(\mu,d)}$ denote the mean-MAD ambiguity set, with the additional constraint that the support of $X$ is (a subset of) the interval $[a,b]$. We can then prove the following result.
\begin{proposition}\label{prop:mad}
For a real-valued random variable $X$ with distribution  $\P\in\mathcal{P}_{(\mu,d)}$,  it holds that
\begin{equation}
    \sup_{\P\in\cP_{(\mu,d)}}\E_\P[X \mid X\geq t]=\begin{cases}
    \mu+\frac{d(\mu-t)}{2(\mu-t)-d},\quad &\textnormal{for } t<\mu-\frac{d(b-\mu)}{2(b-\mu)-d}, \\
    b, &\textnormal{for } t\geq\mu-\frac{d(b-\mu)}{2(b-\mu)-d}.
    \end{cases}
\end{equation}
\end{proposition}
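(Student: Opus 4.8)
The plan is to mirror the primal--dual argument used for Proposition~\ref{prop:meanvar}, adapting it to the piecewise-linear structure induced by the mean absolute deviation. By Lemma~\ref{lemma:equiv} together with the strong-duality guarantee of Theorem~\ref{thm1}, it suffices to analyze the semi-infinite dual \eqref{eq:dual} with the quadratic moment function $x^2$ replaced by the MAD function $|x-\mu|$ and the dispersion value $\sigma^2+\mu^2$ replaced by $d$. Writing $M(x) := \lambda_0 + \lambda_1 x + \lambda_2 |x-\mu|$ for the dual majorant, the feasibility conditions become $M(x) \geq 0$ for $x \in [a,t)$ and $M(x) \geq x - \lambda_3$ for $x \in [t,b]$, together with $\lambda_0 + \lambda_1\mu + \lambda_2 d \leq 0$. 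Here $M$ is piecewise linear with a kink at $\mu$, so the analysis splits according to the position of $t$ relative to $\mu$ and to the right endpoint $b$.

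First, for $t$ below the stated threshold I would guess, via complementary slackness, a two-point extremal distribution placing mass at $t^-$ (just below $t$) and at an upper atom $x_0 \in (\mu, b)$. Matching the mean and MAD while enforcing the balance $\E[(X-\mu)^+] = \E[(\mu-X)^+] = d/2$ forces $x_0 = \mu + \frac{d(\mu-t)}{2(\mu-t)-d}$ with atom weights $\frac{d}{2(\mu-t)}$ and $1 - \frac{d}{2(\mu-t)}$; since the lower atom falls outside the event, the conditional expectation equals $x_0$. On the dual side I would take the right piece of $M$ to coincide with the objective line $x - \lambda_3$ on $[\mu,b]$ and the left piece to pass through $(t,0)$, solve the resulting linear system for $(\lambda_0,\lambda_1,\lambda_2)$, and minimize $\lambda_3$ with the constraint $\lambda_0+\lambda_1\mu+\lambda_2 d \leq 0$ active; this yields $\lambda_3^* = \mu + \frac{d(\mu-t)}{2(\mu-t)-d}$, matching the primal value. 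Verifying that this $M$ is genuinely dual feasible reduces to two sign conditions on $\lambda_2 = \frac{\mu-t}{2(\mu-t)-d}$, namely $\lambda_2 > 0$ and $\lambda_2 \geq \frac12$, both of which follow from $d > 0$ and from $t$ lying below the threshold, which guarantees $2(\mu-t) - d > 0$.

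The second regime is governed by the support constraint: the upper atom $x_0$ reaches $b$ exactly when $t = \mu - \frac{d(b-\mu)}{2(b-\mu)-d}$, the stated breakpoint, and for larger $t$ the construction above is infeasible because it would place mass above $b$ (equivalently, it would return a ``bound'' exceeding $b$, which is impossible for a variable supported on $[a,b]$). For $t$ at or above the threshold I would instead exhibit the trivial dual certificate $M \equiv 0$ with $\lambda_3 = b$, which is feasible for all $t$ and already proves the upper bound $b$, and match it with the two-point distribution placing mass at $x_1 = \mu - \frac{d(b-\mu)}{2(b-\mu)-d}$ and at $b$. A short computation shows this distribution has mean $\mu$ and MAD $d$, and because $x_1 < t$ precisely in this regime, conditioning on $\{X \geq t\}$ leaves all event-mass at $b$, so the conditional expectation equals $b$; feasibility requires only $x_1 \geq a$, which holds under the standing consistency of the mean--MAD--support data. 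A final check that the two branches agree at the breakpoint (both equal $b$) confirms continuity of the bound.

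I expect the main obstacle to be the dual-feasibility verification across the kink at $\mu$: unlike the smooth parabola of Proposition~\ref{prop:meanvar}, the majorant here is a piecewise-linear function, and one must confirm that the single linear piece chosen to be tight does not violate $M(x) \geq x - \lambda_3$ on the complementary subinterval $[t,\mu)$ nor $M(x) \geq 0$ on $[a,t)$. A secondary subtlety is the mild degeneracy created by letting the right piece coincide with $x-\lambda_3$ on all of $[\mu,b]$, which renders the optimal primal non-unique; isolating the two-atom representative that respects the support and realizes the conditional expectation is what pins down the extremal distribution and, together with the limiting argument $t^- \to t$ inherited from the mean-variance case, delivers sharpness.
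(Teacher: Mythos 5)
Your proposal is correct and follows essentially the same primal--dual route as the paper's proof: in the first regime, the wedge-shaped dual majorant whose right piece coincides with $x-\lambda_3$ on $[\mu,b]$ and whose left piece passes through $(t,0)$, paired with the two-point primal on $\{t^-,\,\mu+\tfrac{d(\mu-t)}{2(\mu-t)-d}\}$ with weights $\tfrac{d}{2(\mu-t)}$ and $1-\tfrac{d}{2(\mu-t)}$; in the second regime, the flat certificate ($\lambda_0=\lambda_1=\lambda_2=0$, $\lambda_3=b$) paired with the two-point distribution on $\{\mu-\tfrac{d(b-\mu)}{2(b-\mu)-d},\,b\}$. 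The only cosmetic difference is that the paper enumerates three candidate dual shapes and dismisses one via complementary slackness, whereas you construct the two relevant certificates directly and verify feasibility through the slope conditions on $\lambda_2$, which is a slightly streamlined version of the same argument.
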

Again, we see that the second case becomes uninformative, as it simply reduces to the robust solution (i.e., it agrees with the upper bound of the support). It is worth noting here the interplay between the size of the ambiguity set and the set that describes the random event/side information. Despite having only a limited number of distributions to choose from, if the realizations of the random variable are limited to too small an interval, the bounds become overly conservative, as an extremal distribution can be constructed for which the moment constraints are satisfied, yet the support point on $\event$ can be made arbitrarily large, bounded, of course, by the upper bound of the support. For the mean-MAD ambiguity set, the extremal distribution also agrees with the distribution attaining the lower bound on the corresponding tail probability \citep{roos2022tight}.

Proposition~\ref{prop:mad} can be generalized further. Assume that the dispersion information is modeled through the expectation of a convex function $d(\cdot)$ of the random variable $X$, defined as $\bar{\sigma}:=\E[d(X)]$.
We can state the following result for such arbitrary convex dispersion measures.
\begin{proposition}\label{prop:meandisp}
Suppose that there exists a solution $x_0^*\in(t,\infty)$ to the equation
$$
\frac{\bar{\sigma} t-\mu  d(t)}{t d(x_0)-x_0 d(t)} +  \frac{\mu  d(x_0) - \bar{\sigma}  x_0}{t d(x_0) - x_0 d(t)} = 1, 
$$
such that the corresponding two-point distribution, with support $\{t,x^*_0\}$, is feasible. Then, for a real-valued random variable $X$ with distribution $\P\in\mathcal{P}_{(\mu,\bar{\sigma})}$,  it holds that
\begin{equation}
    \sup_{\P\in\cP_{(\mu,\bar{\sigma})}}\E_\P[X \mid X\geq t]=
    x_0^*. 
\end{equation}
\end{proposition}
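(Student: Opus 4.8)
The plan is to run the same primal--dual scheme used for Propositions~\ref{prop:meanvar} and~\ref{prop:mad}, with the convex function $d(\cdot)$ replacing $x^2$ and $|x-\mu|$. With $g(x)=x$, dispersion constraint $\E_\P[d(X)]=\bar\sigma$, and event $\event=\{X\ge t\}$, Theorem~\ref{thm1} furnishes the dual
$$
\inf_{\lambda_0,\lambda_1,\lambda_2,\lambda_3}\ \lambda_3\quad\text{s.t.}\quad \lambda_0+\lambda_1\mu+\lambda_2\bar\sigma\le 0,\quad M(x)\ge 0\ \ (x<t),\quad M(x)+\lambda_3\ge x\ \ (x\ge t),
$$
where $M(x):=\lambda_0+\lambda_1 x+\lambda_2 d(x)$. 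I would establish the two inequalities $\sup\ge x_0^*$ and $\sup\le x_0^*$ separately and then invoke strong duality.

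For the lower bound, I use the hypothesis directly. The assumed root $x_0^*$ of the stated equation is exactly the condition that the two-point distribution on $\{t,x_0^*\}$, whose masses $p_t=(x_0^*-\mu)/(x_0^*-t)$ and $p_{x_0^*}=(\mu-t)/(x_0^*-t)$ are pinned down by the mass and mean constraints, also matches the dispersion $\E_\P[d(X)]=\bar\sigma$; clearing denominators in $p_t\,d(t)+p_{x_0^*}\,d(x_0^*)=\bar\sigma$ reproduces the displayed equation. Placing the lower mass at $t^-$ rather than $t$ (the limiting construction $t_k=t-\tfrac1k$, as in Proposition~\ref{prop:meanvar}) removes it from the event, so conditioning on $\{X\ge t\}$ leaves all conditional mass at $x_0^*$ and gives $\E_\P[X\mid X\ge t]=x_0^*$. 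Hence $\sup\ge x_0^*$.

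For the upper bound, I construct a dual solution with value $x_0^*$. Fixing $\lambda_3=x_0^*$, I determine $(\lambda_0,\lambda_1,\lambda_2)$ from the three conditions $M(t)=0$, $M(x_0^*)=0$, and $M'(x_0^*)=1$, which make $M$ tangent to the line $\ell(x):=x-x_0^*$ at $x_0^*$ while vanishing at $t$. Convexity of $d$ does the heavy lifting: the slope condition $M'(x_0^*)=1>0$ at the right zero of a function with zeros at $t<x_0^*$ is incompatible with $\lambda_2\le 0$ (an affine or concave $M$ would be nonpositive just left of $x_0^*$), so necessarily $\lambda_2>0$ and $M$ is convex. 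A convex $M$ lies above its tangent line, giving $M(x)\ge\ell(x)$ and hence the constraint on $\{x\ge t\}$; and a convex $M$ with zeros at $t$ and $x_0^*$ is nonnegative outside $[t,x_0^*]$, giving the constraint on $\{x<t\}$. The remaining scalar constraint is settled by evaluating $M$ against the two-point distribution above: since that distribution already meets the mass and mean constraints and satisfies $M(t)=M(x_0^*)=0$, one has $0=\E_\P[M(X)]=\lambda_0+\lambda_1\mu+\lambda_2\E_\P[d(X)]$, which equals $\lambda_0+\lambda_1\mu+\lambda_2\bar\sigma$ precisely when the dispersion constraint holds, so the hypothesis forces $\lambda_0+\lambda_1\mu+\lambda_2\bar\sigma=0\le 0$. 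The dual solution is thus feasible with objective $\lambda_3=x_0^*$, so weak duality yields $\sup\le x_0^*$, and combining with the lower bound completes the proof.

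The step I expect to be the crux is the global feasibility of the dual majorant for an arbitrary convex $d$: lacking the explicit coefficients of the quadratic case, the argument must rest entirely on convexity, namely the forced sign $\lambda_2>0$ and the two tangent-line/zero-set comparisons. A secondary subtlety is the non-attainment and the degenerate regime (mirroring the second cases of Propositions~\ref{prop:meanvar} and~\ref{prop:mad}): the existence of an admissible root $x_0^*\in(t,\infty)$ that keeps the two-point distribution feasible is exactly the hypothesis, and it is what prevents the bound from collapsing to the uninformative support-driven value.
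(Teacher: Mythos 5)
Your proof is correct and follows essentially the same primal--dual route as the paper: the same two-point primal candidate supported at $\{t^-,x_0^*\}$ (with the hypothesis equation read, equivalently, as compatibility of the mass, mean, and dispersion constraints), and the same dual majorant $M(x)=\lambda_0+\lambda_1 x+\lambda_2 d(x)$ vanishing at $t$ and tangent to $x-\lambda_3$ at $x_0^*$ with $\lambda_3=x_0^*$, certified optimal by weak duality. If anything, your feasibility verification---forcing $\lambda_2>0$ by the convexity/zero-set argument and deducing $\lambda_0+\lambda_1\mu+\lambda_2\bar{\sigma}=0$ by evaluating $\E_\P[M(X)]$ under the primal candidate---is spelled out more explicitly than in the paper, which obtains the same facts by solving a joint system of equations and imposing tightness of that dual constraint.
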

Proposition~\ref{prop:meandisp} covers a wide range of dispersion measures, not limited to only variance and MAD. It also incorporates asymmetric measures of dispersion, such as semivariance, semimean deviations, and partial moments. More generally, it encompasses all dispersion measures that are modeled using piecewise convex functions. Since the $p$-norms on $\mathbb{R}$ are convex, these are naturally included in the category of convex dispersion measures as well. Another notable function that falls into this class is the Huber-loss function, which has been extensively studied in the field of robust statistics.

As explained earlier, using only moment information often leads to overly conservative bounds and pathological worst-case distributions. 
We require additional assumptions about the distribution's shape to sharpen the bounds and avoid the pathological two-point distributions that constitute the worst-case scenario in the previous examples. We next study two such structural properties, i.e., symmetry and unimodality.
The random variable $X$ is said to admit a symmetric distribution about a point $m$ if $\P(X\in[m-x,m]) = \P(X\in[m,m+x])$ for all $x\geq0$. A random variable $X$ has a unimodal distribution with mode $m$ if its distribution function is a concave function on $(-\infty,m]$ and convex on $(m,\infty)$. Both definitions are generalized so that they admit probability distributions that allow for point masses at $m$.
We next consider a distribution that is symmetric about its mean $\mu$ with the values of the mean and variance given. Making use of primal-dual arguments, we obtain the following result.

\begin{proposition}\label{prop:sym}
For a real-valued random variable $X$ with a symmetric distribution $\P\in\cP^{\rm sym}_{(\mu,\sigma)}$, it holds that
\begin{equation}
    \sup_{\P\in\cP^{\rm sym}_{(\mu,\sigma)}}\E_\P[X \mid X\geq t]=\begin{cases}
    \mu + \frac{(\mu-t)\sigma^2}{2(t-\mu)^2-\sigma^2},\quad &\textnormal{for } t<\mu-\sigma, \\
    \mu + \sigma,\quad &\textnormal{for } \mu-\sigma \leq t<\mu, \\
    \infty, &\textnormal{for } t\geq\mu.
    \end{cases}
\end{equation}
\end{proposition}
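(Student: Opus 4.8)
The plan is to exploit the mixture representation of the structural ambiguity set, as licensed by Theorem~\ref{thm2}, and thereby collapse the infinite-dimensional problem to a one-parameter optimization. The set $\cP^{\rm sym}_{(\mu,\sigma)}$ is generated by the symmetric two-point distributions
\[
\mathbb{T}_s=\tfrac12\big(\delta_{\mu-s}+\delta_{\mu+s}\big),\qquad s\ge0,
\]
which are its extreme points (with $\mathbb{T}_0=\delta_\mu$), so that every admissible $\P$ is a mixture $\P=\int_0^\infty\mathbb{T}_s\,\mathrm{d}\mathbb{M}(s)$. Because each $\mathbb{T}_s$ already has mean $\mu$, the only constraint the variance imposes on the mixing measure is $\int_0^\infty s^2\,\mathrm{d}\mathbb{M}(s)=\sigma^2$, and the Richter--Rogosinski theorem then caps the optimal $\mathbb{M}$ at two atoms.

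The decisive step is to express the objective as a linear-fractional functional of $\mathbb{M}$. Set $r:=\mu-t$ and take first $t<\mu$, so $r>0$. I split the atoms of $\mathbb{T}_s$ according to whether $\{X\ge t\}$ captures both points (which happens for $s\le r$ and contributes $\mu$) or only the upper point $\mu+s$ (for $s>r$). A short computation yields
\[
\E_\P[X\mid X\ge t]=\mu+\frac{\int_{(r,\infty)}s\,\mathrm{d}\mathbb{M}(s)}{1+\mathbb{M}([0,r])}.
\]
Assumption (A2) holds automatically on this range, since symmetry forces $\P(X\ge t)\ge\P(X\ge\mu)\ge\tfrac12$ whenever $t\le\mu$, so the ratio is well defined.

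It then remains to maximize this ratio over two-atom measures $\mathbb{M}=(1-w)\delta_a+w\delta_b$ with $0\le a\le b$ and $(1-w)a^2+wb^2=\sigma^2$. I would first argue that the lower atom sits at $a=0$: lowering $a$ relaxes the variance budget and hence permits a larger weight $w$ on the event-carrying atom, and the objective increases in $w$. With $a=0$ one has $w=\sigma^2/b^2$ and the value reduces to $\mu+\sigma^2 b/(2b^2-\sigma^2)$, which is strictly decreasing in $b$; the optimum therefore pushes $b$ to its smallest admissible value. This is where the two finite cases separate. If $r>\sigma$ (that is, $t<\mu-\sigma$), a positive numerator requires $b>r$, the minimizer is the limit $b\to r^+$, and the value is $\mu+\sigma^2 r/(2r^2-\sigma^2)=\mu+(\mu-t)\sigma^2/(2(t-\mu)^2-\sigma^2)$, approached by the three-point distributions $(1-w)\delta_\mu+\tfrac{w}{2}(\delta_{\mu-b}+\delta_{\mu+b})$ as the lower atom descends to $t^-$. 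If instead $r\le\sigma$ (that is, $\mu-\sigma\le t<\mu$), the single distribution $\mathbb{T}_\sigma$ is already admissible and places its upper atom $\mu+\sigma$ alone in the event, attaining $\mu+\sigma$; this coincides with the universal Cauchy--Schwarz ceiling $\int s\,\mathrm{d}\mathbb{M}\le\sigma$ on the numerator, confirming optimality. For $t>\mu$ I would exhibit the diverging sequence $\P_k=(1-w_k)\delta_\mu+\tfrac{w_k}{2}(\delta_{\mu-S_k}+\delta_{\mu+S_k})$ with $S_k\to\infty$ and $w_k=\sigma^2/S_k^2$: eventually only $\mu+S_k$ lies in $\{X\ge t\}$, so $\E_{\P_k}[X\mid X\ge t]=\mu+S_k\to\infty$ while $\P_k(X\ge t)=w_k/2\downarrow0$, precisely the failure of (A2) that renders the bound uninformative.

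I expect the crux to be the separation of regimes in the third step. The Cauchy--Schwarz estimate only delivers the ceiling $\mu+\sigma$, and the real content is recognizing that in the far regime $t<\mu-\sigma$ it is the \emph{event geometry}, not the variance, that binds: the lower atom $\mu-b$ cannot be raised into $[t,\infty)$ without collapsing the favourable ratio, which bounds $b$ below by $r$ and yields a strictly smaller value. A secondary difficulty is the bookkeeping of non-attainment, so the far regime must be argued through maximizing sequences rather than a maximizer. (The single threshold $t=\mu$ is delicate, as symmetry still gives $\P(X\ge\mu)\ge\tfrac12$ there; it is the transition at which the finite value $\mu+\sigma$ gives way to divergence.) Finally, to match the primal--dual style used earlier, one can certify tightness by a quadratic multiplier $M(x)=\lambda_0+\lambda_1 x+\lambda_2 x^2$ whose symmetrization about $\mu$ is an even parabola $c_0+c_2(x-\mu)^2$ majorizing the symmetrized objective, with complementary slackness pinning the support on $\{t^-,\mu,2\mu-t\}$; weak duality over the symmetric cone $\mathcal{A}^*$ then reproduces the same value.
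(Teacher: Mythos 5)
Your proposal is correct in substance but follows a genuinely different route from the paper. The paper works on the dual side: it specializes the dual of Theorem~\ref{thm1} to the symmetric generating class, guesses two candidate quadratic dual functions $M_1^{\rm sym}$ and $M_2^{\rm sym}$ (with the moment constraint tight), and certifies each by constructing a matching primal distribution through complementary slackness; divergence is handled by an explicit sequence supported on $\{\mu,\mu-k,\mu+k\}$. You instead work entirely on the primal side: parameterize $\cP^{\rm sym}_{(\mu,\sigma)}$ by the mixing measure $\mathbb{M}$ over symmetric pairs $\mathbb{T}_s$, note that the mean constraint is automatic so only mass and $\int s^2\,\mathrm{d}\mathbb{M}(s)=\sigma^2$ bind, reduce to two atoms, and run an explicit one-dimensional optimization (lower atom pushed to $0$, value $\mu+\sigma^2 b/(2b^2-\sigma^2)$ strictly decreasing in $b$, hence $b\downarrow\max\{r,\sigma\}$). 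Your key identity $\E_\P[X\mid X\ge t]=\mu+\int_{(r,\infty)}s\,\mathrm{d}\mathbb{M}(s)\big/\bigl(1+\mathbb{M}([0,r])\bigr)$ checks out, both finite values and the extremal families agree with the paper's, and your monotonicity argument makes the regime split at $t=\mu-\sigma$ more transparent than the paper's guess-and-verify duals. Two points of care: (i) the two-atom reduction requires first passing to the linearized problem (Lemma~\ref{lemma:equiv}) before a Richter--Rogosinski count applies, since the objective is fractional in $\mathbb{M}$, and the paper's Theorem~\ref{thm2} as stated gives $m+1=3$ atoms --- your count of $2$ relies on discarding the redundant mean constraint, which is fine but should be said; (ii) at $t=\mu-\sigma$ exactly, the lower atom of $\mathbb{T}_\sigma$ lies \emph{in} the event, so $\mu+\sigma$ is only approached (as $b\downarrow\sigma$), not attained --- the paper glosses over the same point, and the stated supremum is unaffected.

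The one genuine gap is the threshold $t=\mu$, which you explicitly defer rather than resolve --- and in fact it cannot be resolved, because the claim is false there. Your own observation that symmetry forces $\P(X\ge\mu)\ge\tfrac12$, combined with $\E[(X-\mu)\1_{\{X\ge\mu\}}]=\tfrac12\E|X-\mu|\le\tfrac12\sigma$, gives $\E_\P[X\mid X\ge\mu]\le\mu+\sigma$ for \emph{every} symmetric $\P\in\cP^{\rm sym}_{(\mu,\sigma)}$, and $\mathbb{T}_\sigma$ attains this value. So the supremum at $t=\mu$ is $\mu+\sigma$, not $\infty$: the third case of the proposition should read $t>\mu$, with the second case extended to $\mu-\sigma\le t\le\mu$. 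The paper's proof suffers from the same defect --- its sequence $\P_k$ yields $\E_{\P_k}[X\mid X\ge\mu]=\mu+\tfrac{\sigma^2/(2k)}{1-\sigma^2/(2k^2)}\to\mu$ at $t=\mu$, since the atom at $\mu$ carries almost all the conditioning mass --- so your ``delicate threshold'' remark identifies an actual error in the statement rather than a removable hole in your argument. For $t>\mu$ your diverging sequence is valid and coincides with the paper's construction.
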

Observe that the bounds are sharper than the bound in Proposition~\ref{prop:meanvar}, but still vacuous for $t\geq\mu$.
Combining the notions of symmetry and unimodality yields the following, even tighter, bounds:
\begin{proposition}\label{prop:unimod}
For a real-valued random variable $X$ with a symmetric, unimodal distribution $\P\in\cP^{\rm uni}_{(\mu,\sigma)}$, it holds that
\begin{equation}
    \sup_{\P\in\cP^{\rm uni}_{(\mu,\sigma)}}\E_\P[X \mid X\geq t]=\begin{cases}
    \frac{4 \mu  (x^*_0)^3-3 \sigma^2 (t+x^*_0-\mu) (t-x^*_0 + \mu)}{4 (x^*_0)^3-6 \sigma ^2 (t+x^*_0-\mu )},\quad &\textnormal{for } t<\mu -\frac{\sqrt{3}\left(2 \sqrt{2}+1\right)}{7}   \sigma, \\
    \frac{1}{2} \left(\mu + t + \sqrt{3} \sigma \right), &\textnormal{for } \mu -\frac{\sqrt{3}\left(2 \sqrt{2}+1\right)}{7}   \sigma<t<\mu, \\
    \infty, &\textnormal{for } t\geq\mu,
    \end{cases}
\end{equation}
where $x_0^*$ is the real-valued solution to the quartic equation
$$
6\sigma^2 x_0^2 \left(3 (t-\mu )^2 - x_0^2\right)+9 \sigma ^4 (\mu-t-x_0)^2 = 0,
$$
which satisfies the condition $x^*_0\geq\sqrt{3}\sigma$.
\end{proposition}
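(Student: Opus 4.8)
The plan is to exploit the mixture (Khinchine-type) representation of symmetric unimodal laws together with the fundamental theorem for conditional expectations (Theorem~\ref{thm2}), thereby reducing the infinite-dimensional problem to a low-dimensional optimization over an explicit parametric family. A distribution $\P\in\cP^{\rm uni}_{(\mu,\sigma)}$ is symmetric unimodal about $\mu$ if and only if it is a mixture of symmetric uniform laws; that is, the generating set $\mathcal{T}$ consists of the uniform distributions $\mathbb{T}_s$ on $[\mu-s,\mu+s]$ for $s\geq 0$, with $s=0$ giving the point mass $\delta_\mu$. Since each $\mathbb{T}_s$ has mean $\mu$ and variance $s^2/3$, the mean constraint holds automatically and only the variance and normalization constraints remain binding. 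By Theorem~\ref{thm2} it then suffices to search for an optimal \emph{basic} distribution, a convex combination of at most $m+1$ such uniforms.

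First I would solve the primal over this parametric family. Guided by the second regime, I expect the extremal law to be a mixture of the point mass $\delta_\mu$ with weight $w$ and a single symmetric uniform $\mathbb{T}_{x_0}$ with weight $1-w$, the variance constraint forcing $(1-w)\,x_0^2/3=\sigma^2$ and leaving one free parameter. For $t<\mu$ the event $\{X\geq t\}$ captures all of the point mass and the upper portion of the uniform, so the conditional expectation is an explicit rational function of $(w,x_0)$. Eliminating $w$ through the variance constraint and maximizing over $x_0$ yields a stationarity condition, which I expect to reduce precisely to the quartic $6\sigma^2 x_0^2(3(t-\mu)^2-x_0^2)+9\sigma^4(\mu-t-x_0)^2=0$; the admissible maximizer is the root with $x_0^*\geq\sqrt3\sigma$, and substituting it back gives the first-regime value. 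When $t$ is close enough to $\mu$, the optimal weight hits the boundary $w^*=0$ and the optimizer degenerates to the single variance-matching uniform $\mathbb{T}_{\sqrt3\sigma}$; conditioning this law on $X\geq t$ gives a distribution uniform on $[t,\mu+\sqrt3\sigma]$ with mean $\tfrac12(\mu+t+\sqrt3\sigma)$, which is the second-regime value. The threshold $t=\mu-\tfrac{\sqrt3(2\sqrt2+1)}{7}\sigma$ is exactly the point at which $w^*=0$, and one checks that the two branch expressions agree there.

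To certify optimality I would then produce a matching dual solution. Specializing Theorem~\ref{thm1} to this structural set, membership in $\mathcal{A}^*$ requires the majorant $\phi(x)=\lambda_0+\lambda_1 x+\lambda_2 x^2+(\lambda_3-x)\1_{\event}(x)$ to have \emph{nonnegative average over every symmetric interval} $[\mu-s,\mu+s]$, rather than being pointwise nonnegative as in Proposition~\ref{prop:meanvar}. I would compute $\Psi(s):=\tfrac{1}{2s}\int_{\mu-s}^{\mu+s}\phi(x)\,\mathrm{d}x$ as an explicit piecewise-smooth function of the half-width $s$, choose $\lambda_0,\lambda_1,\lambda_2,\lambda_3$ so that $\Psi(s)\geq 0$ with equality exactly at the widths active in the primal (complementary slackness), and minimize $\lambda_3$ subject to $\lambda_0+\lambda_1\mu+\lambda_2(\sigma^2+\mu^2)\leq 0$. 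Matching $\lambda_3^*$ with the primal conditional expectation closes the duality gap and yields sharpness. Finally, the case $t\geq\mu$ is handled as in Proposition~\ref{prop:sym}: one builds admissible symmetric unimodal mixtures that keep the variance fixed while pushing an arbitrarily wide uniform component past $t$, driving $\E_\P[X\mid X\geq t]\to\infty$.

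The main obstacle I anticipate is the averaged (rather than pointwise) dual feasibility condition intrinsic to the structural cone: identifying the correct active half-widths and verifying $\Psi(s)\geq 0$ globally is considerably more delicate than the quadratic-majorant bookkeeping of the purely moment-based case. Closely tied to this is the algebra of the quartic, namely showing it has a unique relevant root with $x_0^*\geq\sqrt3\sigma$, that this root furnishes a genuinely feasible symmetric unimodal mixture, and that the weight $w^*$ crosses zero exactly at $t=\mu-\tfrac{\sqrt3(2\sqrt2+1)}{7}\sigma$, so that the two regimes join continuously.
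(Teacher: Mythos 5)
Your proposal is correct and follows essentially the same route as the paper: both reduce the problem via the uniform generating class for symmetric unimodal laws to a finite-dimensional primal over a mixture of $\delta_\mu$ and a single symmetric uniform, derive the quartic as the stationarity condition with the second regime arising at the boundary $w^*=0$ (i.e., $p=1$), and handle $t\geq\mu$ by a variance-preserving sequence with an ever-wider uniform component. The only difference is direction of the primal--dual argument --- the paper first uses convexity of the cubic dual function $M^{\rm uni}(x)$ and complementary slackness to justify the two-candidate form before optimizing the primal, whereas you guess the form and certify ex post with the averaged dual constraint $\Psi(s)\geq 0$ --- which is a reordering of the same reasoning rather than a different method.
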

Although Proposition~\ref{prop:unimod} does not provide a closed-form solution, it does demonstrate the versatility of the primal-dual arguments used to derive it. It further highlights that structural properties can be addressed in conjunction with moment information, even for conditional-bound problems.


\subsection{A robust pricing example}\label{sec:pricing}
To demonstrate the primal-dual approach for an alternative objective function $g(x)$, we next turn our attention to a specific minimax problem. We consider the objective of the robust monopoly-pricing problem; see, e.g., \cite{giannakopoulos2020robust,chen2022distribution,elmachtoub2021value}. This involves evaluating the revenue function
$$
\Pi(z):= \E[p\1_{\{X\geq p\}}] = p\P(X\geq p),
$$
which models the expected revenue that a seller of a single item receives when the price posted is equal to $p$, and the valuation of customers is distributed as $X$.
As in \cite{giannakopoulos2020robust}, we will attempt to minimize the maximum relative regret by posting the minimax selling price $p^*$; that is, we solve
$$
\min_{p}\max_{\mathbb{P} \in \mathcal{P}} \frac{\max_z\E_{\P}[\Pi(z)]}{\E_{\P}[\Pi(p)]}.
$$
We use the “min” and “max” operators only to avoid notational clutter, as it does not imply that the optima are actually attained. \citet{chen2022distribution} present various results for robust monopoly pricing and also consider this relative regret criterion. In \cite{chen2022distribution} an almost identical objective function is considered, namely,
$$
\min_p \max_{\mathbb{P}}\left\{1-\frac{\E_{\P}[\Pi(p)]}{\max_z\E_{\P}[\Pi(z)]}\right\}=1-\max _p \min _{\mathbb{P}} \frac{\E_{\P}[\Pi(p)]}{\max_z\E_{\P}[\Pi(z)]}.
$$
We, however, work with the reciprocal
$$
\min_p \max_{\mathbb{P}} \frac{\max_z\E_{\P}[\Pi(z)]}{\E_{\P}[\Pi(p)]}=\min_p \max_{\mathbb{P}} \max_z \frac{\E_{\P}[\Pi(z)]}{\E_{\P}[\Pi(p)]} = \min_p \max_z \max_{\mathbb{P}} \frac{\E_{\P}[\Pi(z)]}{\E_{\P}[\Pi(p)]},
$$
where we swap the maximization operators to obtain the final equality. As in the proof of Theorem 4 in \cite{chen2022distribution}, it is imperative to solve the semi-infinite optimization problem
\begin{equation}\label{eq:primalprice}
\max_{\mathbb{P} \in \mathcal{P}_{(\mu, \sigma)}} \frac{\mathbb{E}_{\mathbb{P}}[z \1\{X\geq z\}]}{p\mathbb{P}(X \geq p)}.
\end{equation}
Notice that this problem effectively models the conditional expectation $\E[\frac{z}{p} \frac{\1\{X\geq z\}}{\1\{X\geq p\}} \mid X\in \event]$ with the random event $\event=\{X\geq p\}$.
As \citet{chen2022distribution} try to optimize the dual problem directly, their proof requires several lengthy, tedious arguments to obtain the tight bounds. We will simplify their proof using primal-dual arguments as in the previous subsection. We assume that $p<\mu$, as otherwise, it is possible to construct an extremal distribution for which the relative regret ratio diverges; see \cite{giannakopoulos2020robust}.
For $z<p$, the expectation in the numerator will evaluate to 1. Hence,
$$
\max_{\mathbb{P} \in \mathcal{P}_{(\mu, \sigma)}} \frac{\mathbb{E}_{\mathbb{P}}[\frac{z}{p} \1\{X\geq z\}]}{\mathbb{P}(X \geq p)} \leq \frac{z}{p}\max_{\mathbb{P} \in \mathcal{P}_{(\mu, \sigma)}} \frac{1}{\mathbb{P}(X \geq p)} = \frac{z}{p}\frac{1}{\min\limits_{\P\in \mathcal{P}_{(\mu, \sigma)}}\mathbb{P}(X \geq p)}.
$$
To bound the latter, we can use the one-sided version of Chebyshev's inequality (commonly known as Cantelli's inequality). It then follows that
$$
\max_{\mathbb{P} \in \mathcal{P}_{(\mu, \sigma)}} \frac{\mathbb{E}_{\mathbb{P}}[\frac{z}{p} \1\{X\geq z\}]}{\mathbb{P}(X \geq p)} =\frac{z}{p}\frac{\sigma^2 + (\mu-p)^2}{(\mu-p)^2}.
$$
Hence,
$$
\max_{z\leq p} \max_{\P} \frac{\mathbb{E}_{\mathbb{P}}[\frac{z}{p} \1\{X\geq z\}]}{p\mathbb{P}(X \geq p)} = \frac{\sigma^2 + (\mu-p)^2}{(\mu-p)^2}.
$$

For $z\geq p$, it holds that
$$
\frac{\mathbb{E}_{\mathbb{P}}[\frac{z}{p} \1\{X\geq z\}]}{\mathbb{P}(X \geq p)} = \frac{z}{p}\frac{\P(X\geq z)}{\P(X\geq p)} = \frac{z}{p}\frac{\P(X\geq z\cap X\geq p)}{\P(X\geq p)} = \frac{z}{p} \P(X\geq z \mid X\geq p),
$$
and thus, we require a bound for conditional probabilities.
Using primal-dual arguments, we obtain the following result.
\begin{proposition}\label{prop:prob}
Suppose that $z\geq p$ and $p<\mu$. For a nonnegative random variable $X$ with distribution $\P\in\cP_{(\mu,\sigma)}$, it holds that
\begin{equation}
    \sup_{\mathbb{P}\in\mathcal{P}_{(\mu,\sigma)}} \P(X\geq z \,|\, X\geq p) =  \begin{cases}
    \frac{\sigma^2}{(z-\mu)^2 + \sigma^2},\quad &\textnormal{for } z\geq \mu + \frac{2\sigma^2(\mu-p)}{(\mu-p)^2 + \sigma^2},\\
    \left(\frac{(\mu-p)^2+\sigma^2}{\sigma^2+\mu^2-p^2+2z(\mu-p)}\right)^2, &\textnormal{for } \frac{\sigma^2+\mu^2-p\mu}{\mu-p}\leq z \leq \mu + \frac{2\sigma^2(\mu-p)}{(\mu-p)^2 + \sigma^2}, \\
        1,\quad &\textnormal{otherwise. } \\
    \end{cases}
\end{equation}
\end{proposition}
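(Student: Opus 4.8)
The plan is to reuse the primal--dual template of Proposition~\ref{prop:meanvar}, now with $g(x)=\1_{\{x\geq z\}}$ and event $\event=\{X\geq p\}$. Because $z\geq p$ one has $\1_{\{x\geq z\}}\1_{\event}(x)=\1_{\{x\geq z\}}$, so $\P(X\geq z\mid X\geq p)=\E_\P[\1_{\{X\geq z\}}\mid X\geq p]$ is a conditional expectation of the form treated in Section~\ref{section2}. First I would check the hypotheses of Theorem~\ref{thm1}: (A1) holds since $g$ is an indicator, (A3) holds because $\sigma>0$ places $(\mu,\sigma^2+\mu^2)$ in the interior of the moment cone, and (A2) holds because the Cantelli inequality gives $\inf_{\P\in\mathcal{P}_{(\mu,\sigma)}}\P(X\geq p)\geq (\mu-p)^2/(\sigma^2+(\mu-p)^2)>0$ for $p<\mu$. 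Strong duality then reduces the supremum to the dual \eqref{eq:dual}. Writing $M(x):=\lambda_0+\lambda_1 x+\lambda_2 x^2$, the problem becomes: minimize $\lambda_3$ subject to $\lambda_0+\lambda_1\mu+\lambda_2(\sigma^2+\mu^2)\leq 0$ together with $M(x)\geq 0$ on $[0,p)$, $M(x)\geq -\lambda_3$ on $[p,z)$, and $M(x)\geq 1-\lambda_3$ on $[z,\infty)$. Note that when $\lambda_2>0$ the moment inequality forces $M(\mu)\leq-\lambda_2\sigma^2<0$, so the optimal parabola must dip below zero near $\mu\in(p,z)$.

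For each range of $z$ I would guess a parabola tangent to this three-level target, read the candidate extremal law off complementary slackness, and certify optimality by weak duality. The two outer regimes are the simplest. For small $z$ the certificate $M\equiv 0$, $\lambda_3=1$ is feasible and is matched by any feasible law with no mass on $[p,z)$; explicitly, the two-point law on $\{p^{-},b\}$ with $b=\mu+\sigma^2/(\mu-p)$ satisfies the moment constraints and, as long as $b\geq z$, yields $\P(X\geq z\mid X\geq p)=1$. This fixes the lowest threshold at $z=\mu+\sigma^2/(\mu-p)$, i.e.\ the displayed $(\sigma^2+\mu^2-p\mu)/(\mu-p)$, below which the bound equals $1$. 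For large $z$ the atom below $p$ is unnecessary: the optimiser is the two-point Cantelli law $\{a,z\}$ with $a=\mu-\sigma^2/(z-\mu)\geq p$, whose dual certificate is the parabola with vertex at $a$ touching $-\lambda_3$ there and $1-\lambda_3$ at $z$; minimizing $\lambda_3$ over the vertex position recovers the one-sided Chebyshev value $\sigma^2/((z-\mu)^2+\sigma^2)$.

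The intermediate regime is the heart of the argument. Here I expect a genuine three-point extremal law with an atom just below $p$ (discarded by the conditioning), an interior atom in $(p,z)$, and an atom at $z$; dually, the parabola is tangent to $0$ at $p$, to $-\lambda_3$ at its vertex $v\in(p,z)$, and to $1-\lambda_3$ at $z$. Writing $M(x)=\lambda_2(x-v)^2-\lambda_3$, the contact conditions $M(p)=0$ and $M(z)=1-\lambda_3$ give $\lambda_3=\lambda_2(v-p)^2$ and $\lambda_2(z-v)^2=1$, hence $\lambda_3=\left(\frac{v-p}{z-v}\right)^2$, which is exactly the squared form in the statement; the tight moment constraint $M(\mu)+\lambda_2\sigma^2=0$ then determines $v$ explicitly in terms of $\mu,p,\sigma$, and substituting it reproduces the displayed bound. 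Solving the three moment equations for the weights and checking they are nonnegative (in the limit $p^{-}\uparrow p$) furnishes a primal law attaining the dual value.

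I expect the main obstacle to be locating the two regime boundaries and checking feasibility there, rather than the algebra of any single case. Each boundary is where a configuration degenerates: the upper threshold (the one displayed in the statement) is where the sub-$p$ atom's weight falls to zero and, equivalently, where the interior vertex $v$ merges with the Cantelli point $a$, so the three-point law collapses onto $\{a,z\}$; the lower threshold $\mu+\sigma^2/(\mu-p)$ is where the interior atom's weight falls to zero and the law collapses onto $\{p^{-},z\}$, driving the bound to $1$. I would verify that the candidate parabola is globally feasible on each range (in particular that it remains nonnegative on all of $[0,p)$ and that $\lambda_2>0$), that the three weights are nonnegative precisely on the intermediate range, and that the three expressions agree at the thresholds so that the supremum is continuous in $z$. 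As in Proposition~\ref{prop:meanvar}, a final technical point is that the atom at $p^{-}$ is not attained but approached along $p-\frac1k\uparrow p$; sharpness therefore holds in the limiting sense provided by Lemma~\ref{lemma:equiv} and Theorem~\ref{thm2}.
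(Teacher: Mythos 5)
Your strategy is the same as the paper's (primal--dual parabola certificates, three regimes, complementary slackness, tangency plus a tight moment constraint), and the computations you actually display are correct. The genuine gap is exactly the step you defer to the end: the assertion that your derived formula ``is exactly the squared form in the statement'' and that the vanishing of the sub-$p$ weight occurs at ``the one displayed in the statement.'' Neither holds, and carrying out your own verification plan exposes this. Your conditions force the vertex $v=\frac{\mu^2+\sigma^2-p^2}{2(\mu-p)}$ and hence
\[
\lambda_3=\Bigl(\tfrac{v-p}{z-v}\Bigr)^2=\left(\frac{(\mu-p)^2+\sigma^2}{2z(\mu-p)-(\sigma^2+\mu^2-p^2)}\right)^2,
\]
whose denominator is $2z(\mu-p)-(\sigma^2+\mu^2-p^2)$, not the printed $\sigma^2+\mu^2-p^2+2z(\mu-p)$; only the former equals $1$ at the lower threshold $z=\mu+\sigma^2/(\mu-p)$, which is precisely the continuity check you propose. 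Likewise, the sub-$p$ mass of the three-point law is proportional to $(\mu-v)(\mu-z)+\sigma^2$, and since $\mu-v=\frac{(\mu-p)^2-\sigma^2}{2(\mu-p)}$, it vanishes at $z=\mu+\frac{2\sigma^2(\mu-p)}{(\mu-p)^2-\sigma^2}$ (and never, when $\mu-p\leq\sigma$, in which case the Cantelli regime is empty), not at the printed $\mu+\frac{2\sigma^2(\mu-p)}{(\mu-p)^2+\sigma^2}$. Concretely, take $\mu=3$, $\sigma=1$, $p=1$, $z=4$: the printed statement puts this in the Cantelli regime ($4\geq 3.8$) with value $\tfrac12$, yet the law with masses $\tfrac{7}{105},\tfrac{48}{105},\tfrac{50}{105}$ at $1^-,\tfrac94,4$ has mean $3$ and second moment $10$, is supported on $[0,\infty)$, and gives conditional probability $\tfrac{50}{98}=\tfrac{25}{49}>\tfrac12$; moreover your own certificate $M(x)=\tfrac{16}{49}\bigl(x-\tfrac94\bigr)^2-\tfrac{25}{49}$ is dual feasible and certifies $\tfrac{25}{49}$ as the exact supremum. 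So the step ``the three expressions agree at the thresholds'' would fail if you attempted it.

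For what it is worth, the defect originates in the paper rather than in your method. The paper's own proof of the intermediate case places the interior tangency/atom at $x_0=\frac{\mu^2+\sigma^2-\mu z}{\mu-z}=\mu-\frac{\sigma^2}{z-\mu}$, i.e.\ at the Cantelli point; this is incompatible with the very conditions it imposes ($M_2(p)=0$, tangency to $-\lambda_3$ at $x_0$, tight moment constraint), which force $x_0=v$ as above, and a three-point law whose middle atom sits at the Cantelli point necessarily carries zero sub-$p$ mass, so it cannot produce a bound different from case 1. Your argument, completed honestly, therefore yields a corrected version of the proposition (denominator $2z(\mu-p)-(\sigma^2+\mu^2-p^2)$ and upper threshold with $(\mu-p)^2-\sigma^2$); but as a proof of the statement as printed it breaks down at precisely the matching step you left unverified, and you should not have asserted agreement without performing those checks.
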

Using these bounds, one can show that
$$
\max_{z\geq p} \sup_{\P} \frac{\mathbb{E}_{\mathbb{P}}[\pi(z,X)]}{p\mathbb{P}(X \geq p)} = \frac{\sigma^2 + \mu(\mu-p)}{p(\mu-p)}.
$$
Hence, combining the above results, the optimal price should solve
$$
\min _{p<\mu} \max \left\{\frac{\sigma^2}{p(\mu-p)}+\frac{\mu}{p}, \frac{\sigma^2}{(\mu-p)^2} + 1\right\}.
$$
It is shown by \cite{giannakopoulos2020robust} that the optimal price is the value of $p$ on which both branches of the max operator agree. Observe that by using primal-dual arguments, we have greatly reduced the number of necessary calculations to obtain the desired result. Contrary to \cite{chen2022distribution}, we work with the primal and dual problems concurrently so as to verify the optimality of the suggested solutions in a more effective manner.
It goes without saying that the proposed duality approach in this section probably also works for other pricing problems with fractional objectives, such as e.g.~the personalized pricing setting in \cite{elmachtoub2021value}.

\subsection{Numerical bounds}\label{sec:numericalbounds}
We next show how the strong dual problem \eqref{eq:generaldual} can be reduced to a semidefinite programming problem for the univariate setting. Consequently, the tight bounds can always be obtained numerically as the solution to a (computationally tractable) optimization problem. The numerical experiments have been conducted in the Julia programming language, using the MOSEK solver \citep{mosek} together with the Julia
packages SumOfSquares.jl and PolyJuMP.jl \citep{weisser2019polynomial}.

Assume the objective function $g(x)$ is piecewise polynomial and the moment constraints are described by the traditional power moments. Then the dual problem can be reduced further to a semidefinite programming problem by applying standard DRO techniques discussed in, for example, \cite{bertsimas2005optimal,bertsimas2002relation,popescu2005semidefinite}. In particular, the univariate moment problem reduces to solving a semidefinite program, provided that the dual-feasible set is semi-algebraic; that is, the dual constraint involves checking whether certain polynomial functions are nonnegative on intervals described by the event set $\event$ and the support $\Omega$. 
It is well known that a univariate polynomial is nonnegative if, and only if, it is a sum of squares.
A classic result then states that the semi-infinite constraint in the dual-feasible set of \eqref{eq:generaldual},
with the support $\Omega$ a possibly infinite interval, can be reduced to a set of linear matrix inequalities (LMIs) of polynomial size in the number of moments $m$ (see, e.g., \cite{nesterov1997structure,bertsimas2005optimal,bertsimas2002relation}). 
Since $g(x)$ is piecewise polynomial, the support of the dual constraints in \eqref{eq:generaldual} can be subdivided into subintervals so that these constraints can be reduced to a set of LMIs.
Generalized moment information can be included if these moments are described by piecewise polynomials in the dual problem.
Examples of piecewise polynomial objective functions are the indicator function $g(x) = \1_{[c,\infty)}(x)$ and the stop-loss function $g(x)=\max\{x-c,0\}$. Both of these functions have several relevant applications in e.g.~finance, insurance and inventory control.

\begin{figure}[h!]
\begin{subfigure}{0.5\textwidth}
    \centering
        \centering
        \includegraphics{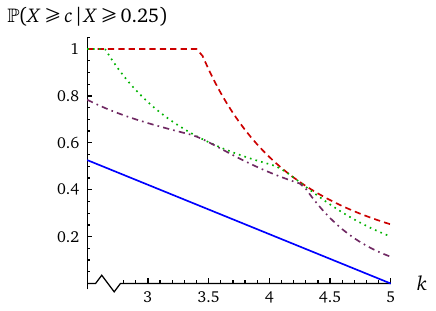}
        \caption{$\event=\{X\geq0.25\}$}
        \label{fig:condproba}
\end{subfigure}%
\begin{subfigure}{0.5\textwidth}
    \centering
        \centering
        \includegraphics{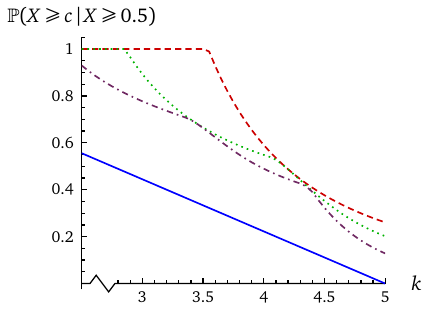}
        \caption{$\event=\{X\geq 0.5\}$}
        \label{fig:condprobb}
\end{subfigure}
\vspace{0.5cm}
\centering
\begin{subfigure}{0.5\textwidth}
    \centering
        \centering
        \includegraphics{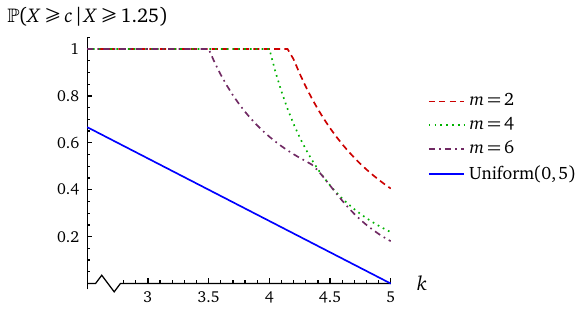}
        \caption{$\event=\{X\geq1.25\}$}
        \label{fig:condprobc}
\end{subfigure}
\vspace{0.1cm}
\caption{Tight bounds for conditional tail probability for ambiguity sets matching the uniform distribution on $[0,5]$}
\label{fig:condprob}
\end{figure}

\begin{figure}[h!]
\begin{subfigure}{0.5\textwidth}
    \centering
        \centering
        \includegraphics{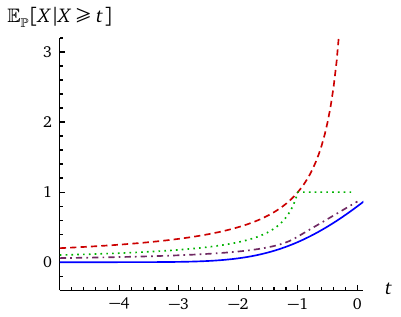}
        \caption{$m=2$}
        \label{fig:normal2m}
\end{subfigure}%
\begin{subfigure}{0.5\textwidth}
    \centering
        \centering
        \includegraphics{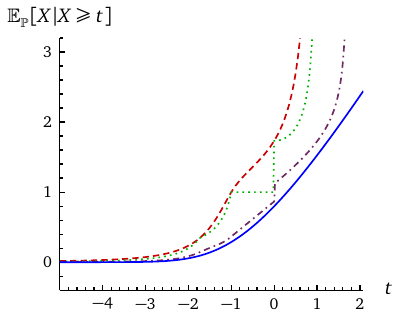}
        \caption{$m=4$}
        \label{fig:normal4m}
\end{subfigure}
\vspace{0.5cm}
\centering
\begin{subfigure}{0.5\textwidth}
    \centering
        \centering
        \includegraphics{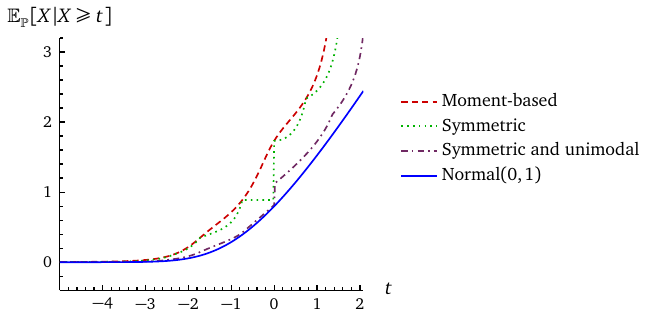}
        \caption{$m=6$}
        \label{fig:normal6m}
\end{subfigure}
\vspace{0.1cm}
\caption{Tight bounds for conditional expectation for ambiguity sets matching the moments and properties of the standard normal distribution}
\label{fig:normalmomentsexample}
\end{figure}

In Figure~\ref{fig:condprob} we provide numerical bounds for the conditional probability $\P(X\geq c\mid X\geq t)$, which corresponds to the piecewise function $g(x) = \1_{[c,\infty)}(x)$. We assume that the ground truth is given by a uniform distribution with support $[0,5]$. We determine the bounds for three types of ambiguity sets, in which the number of available moments varies between $m=2,4$, and 6. Obviously, the bounds become sharper when more moment information is included, but in addition, the uninformative solution becomes less prominent because the size of the ambiguity set reduces. Nevertheless, the uninformative solution becomes more apparent again as the size of $\event$ reduces, as already noted in Section~\ref{sec:meandisp}.

Even if the ambiguity sets are augmented with structural properties, the resulting dual problem can still be reduced to a semidefinite optimization problem.
Consider the setting in which $\cP_0$ contains only symmetric distributions. Using the generator class consisting of symmetric pairs of Dirac measures, the dual problem \eqref{eq:moregeneraldual} reduces to
\begin{equation}\label{eq:dualsymsdp}    
\begin{aligned}
&\inf & &\lambda_3 &\\
&\text{s.t. } &      & \sum_{j=0}^{m} \lambda_{j}q_j\leq 0, \\
&& & \sum_{j=0}^{m+1} \lambda_{j}\left(h_j(\mu-x)+h_j(\mu+x)\right) \geq \\
&&& \quad \quad g(\mu-x)\1_{\event}(\mu-x) + g(\mu+x)\1_{\event}(\mu+x),\, \forall x \geq 0.
\end{aligned}
\end{equation}

Likewise, if we consider symmetric, unimodal distributions (which can be generated by the convex combination of a Dirac measure $\delta_{\mu}$ and rectangular/uniform distributions, i.e., $\delta_{[\mu-z,\mu+z]},\  z>0$), the dual problem becomes 
\begin{equation}\label{eq:dualunimodsdp}    
\begin{aligned}
&\inf & &\lambda_3 &\\
&\text{s.t. } &      & \sum_{j=0}^{m} \lambda_{j}q_j\leq 0, \\
&& & \sum_{j=0}^{m+1} \lambda_{j}\int_{\mu-x}^{\mu+x} h_j(z) {\rm d}z \geq \int_{\mu-x}^{\mu+x} g(z) \1_{\event}(z) {\rm d}z,\, \forall x \geq0, \\
&&& \sum_{j=0}^{m+1} \lambda_{j}h_j(\mu) \geq g(\mu)\1_{\event}(\mu).
\end{aligned}
\end{equation}
Observe that the resulting integral transforms are still piecewise polynomial in $x$. As a consequence, we can reformulate both \eqref{eq:dualsymsdp} and \eqref{eq:dualunimodsdp} as semidefinite programming problems.

Figure~\ref{fig:normalmomentsexample} shows the results for different structural assumptions. Notice that the addition of structural information significantly sharpens the bounds, independently of the available moment information. However, even though the bounds are sharpened with additional information, the bounds still diverge for $t\geq\mu$ when only mean-variance information is available. Still, it is obvious from the figures that this conservatism of the bounds can be mitigated significantly by adding additional moment information.

\section{Distributionally robust optimization with side information}\label{section4}
In this section, we lay the groundwork for a generalized moment-based framework for contextual distributionally robust stochastic optimization.
Section~\ref{sec:contextualDRO} introduces moment-based contextual DRO and a class of ambiguity sets that lead to tractable problems. In Section~\ref{sec:DROmeandisp}, we provide examples of these ambiguity sets for which computationally tractable conic optimization problems can be derived. 
Finally, in Section~\ref{sec:newsvendor}, we present some numerical results for a two-dimensional example.

\subsection{Contextual DRO with mean-dispersion information}\label{sec:contextualDRO}
Given the side information in the form of the event $\vec{X}\in\event$, contextual DRO problems can be stated in general as
\begin{equation}\label{eq:dro}
\inf_{\gvec{\nu}\in\mathcal{V}} \sup_{\mathbb{P}\in\cP} \mathbb{E}_{\mathbb{P}}[f(\gvec{\nu},\vec{X}) \mid \vec{X}\in\event],
\end{equation}
where $f$ is some cost function to be minimized, and $\gvec{\nu}\in\mathcal{V}$ denotes the decision vector with $\mathcal{V}\subseteq\mathbb{R}^p$ a closed, convex set. The probability distribution $\P$ is the joint measure governing $\vec{X}$. Let $\vec{Y}\in\mathcal{Y}\subseteq\mathbb{R}^{n_\vec{y}}$ be a random vector that models the outcome variables that affect the decision problem directly, and let $\vec{Z}\in\mathcal{Z}\subseteq\mathbb{R}^{n_\vec{z}}$ be the covariates (or features) that influence the outcome random variables. Assuming that the supports of $\vec{Y}$ and $\vec{Z}$ are independent, let $\vec{X}=(\vec{Y},\vec{Z})\in\mathcal{Y}\times\mathcal{Z}=:\mathcal{X}$. Henceforth the boldface lowercase characters represent the realizations of the random vectors.
Furthermore, the expectation $\inf_{\gvec{\nu}}\sup_{\P\in\cP}\E_{\P}[v(\gvec{\nu},\vec{X}) \mid \vec{X}\in\event]$ is conditioned  primarily on the information given by the covariates $\vec{Z}$ with $\event_\vec{z}\subseteq\mathcal{Z}$ the information set built from the information on the covariates $\vec{Z}$. Therefore, the side information is described by $\event:=\{\vec{x}=(\vec{y},\vec{z})\in\mathcal{X} : \vec{z}\in\event_{\vec{z}}\}$. This includes the case in which $\event_{\vec{z}}$ is represented by a singleton, which models a particular realization of the covariates. No conditional information is normally included about the outcome variables. Hence, in the remainder of the section, we occasionally use the notation $\E[f(\gvec{\nu},\vec{X}) \mid \vec{Z}\in\event_{\vec{z}}]$ for the conditional expectation given the side information.

We next introduce some additional technical notation tailored to this section of the paper. Boldfaced lowercase characters represent vectors, where the italic character ${x}_k$ denotes the $k$th element of the vector $\vec{x}$ and $\vec{x}^{\top}$ denotes its transpose. Except for the random vectors described above, all boldface uppercase characters represent matrices. For a set $\mathcal{S}$, let $\operatorname{conv}(\mathcal{S})$, $\operatorname{cl}(\mathcal{S})$ and $\operatorname{int}(\mathcal{S})$ denote its convex hull, closure and interior, respectively. For a proper cone $\mathcal{K}\in\mathbb{R}^n$ (i.e., a closed, convex and pointed cone with nonempty interior), the general inequality $\vec{x}\preceq_{\mathcal{K}}\vec{u}$ is equivalent to the set constraint $\vec{u}-\vec{x}\in\mathcal{K}$, while the strict variant $\vec{x}\prec_{\mathcal{K}}\vec{u}$ expresses that $\vec{u}-\vec{x}\in\operatorname{int}(\mathcal{K})$. A function $\vec{h}:\mathbb{R}^n\mapsto\mathbb{R}^m$ is called $\mathcal{K}$-convex if $\vh(\theta\vx_1 + (1-\theta)\vx_2) \preceq_{\mathcal{K}} \theta\vh(\vx_1) +(1-\theta) \vh(\vx_2)$ for all $\vec{x}_1,\vec{x}_2$ and $\theta\in[0,1]$. We use $\mathcal{K}^*$ to denote the dual cone of $\mathcal{K}$, given by $\mathcal{K}^*=\{\vec{u}:\vec{u}^{\top}\vec{x},\ \forall \vec{x}\in\mathcal{K}\}$ with $\vec{u}^{\top}\vec{x}$ the appropriate inner product. The set $\mathbb{S}_+^n$ represents the cone of symmetric positive semidefinite matrices in $\mathbb{R}^{n\times n}$. Finally, for matrices $\vec{A},\vec{B}$, we use $\vec{A}\preceq \vec{B}$ to abbreviate the relation $\vec{A}\preceq_{\mathbb{S}_+^n} \vec{B}$.

In order to derive solvable reformulations of \eqref{eq:contextualDRO},  we shall impose the following conditions:
\begin{itemize}
    \item[(C1)] The side information $\event$ is a closed, convex set with $\inf_{\P\in\cP}\P(X\in\event)>0$, and the support set $\mathcal{X}=\mathbb{R}^{n}$.
    \item[(C2)] The dispersion is modelled by ($\mathcal{D}$-)convex epigraph functions (see \cite{wiesemann2014distributionally,hanasusanto2017ambiguous}). 
    \item[(C3)] The function $f(\gvec{\nu},\vec{x})$ can be represented as
    $$
    f(\gvec{\nu},\vec{x}) = \max_{l\in\mathcal{L}}\{f_l(\gvec{\nu},\vec{x})\},
    $$
    in which $\mathcal{L}$ is a set of indices and the auxiliary functions $f_l$ are of the form
    $$
    f_l(\gvec{\nu},\vec{x}) = \vec{s}_l(\gvec{\nu})^\top \vec{x}  + t_l(\gvec{\nu})
    $$
    where for all $l\in\mathcal{L}$, $\vec{s}_l(\cdot)\in\mathbb{R}^n$  and $t_l(\cdot)\in\mathbb{R}$ are some affine functions of $\gvec{\nu}$.
    \item[(C4)] The ambiguity set $\cP$ satisfies the Slater condition. 
\end{itemize}
\vspace{.25cm}
The rationale behind the first part of condition (C1) is twofold: (i) it enables the use of robust optimization methods to reformulate the model, and (ii) it guarantees that the side event has nonzero measure. There is no necessity for the second part of the condition (closedness and convexity would suffice), as it merely serves to facilitate the mathematical exposition in this section.

The second condition states that the dispersion function has an epigraph that can be described through convex cones. We denote the dispersion function by $\vec{d}: \mathbb{R}^n \mapsto \mathbb{R}^m$, and assume it admits a $\mathcal{D}$-epigraph that is conic representable, with $\mathcal{D}\subseteq\mathbb{R}^m$ a proper cone, meaning that the set $\{(\vec{x}, \vec{u}) \in \mathbb{R}^n \times \mathbb{R}^m: \vec{d}(\vec{x}) \preceq_\mathcal{D} \vec{u}\}$ can be described with conic inequalities, possibly using cones other than $\mathcal{D}$ and auxiliary variables.
See \cite{ben2001lectures} for a comprehensive introduction to conic representations. 
The {epigraphic} mean-dispersion ambiguity set can now be defined as
\begin{equation}\label{eq:epigraphicmeandisp}
\cP_{(\gvec{\mu},\gvec{\sigma})}=\{\P\in \mathcal{P}_0(\mathcal{X}):\, \E_{\P}[\vec{X}]=\gvec{\mu}, \, \E_{\P}[\vec{d}(\vec{X})] \preceq_{\mathcal{D}} \boldsymbol{\sigma}\},
\end{equation}
where $\cP_0$ is the set of probability distributions with support $\mathcal{X}$, the vector  $\gvec{\mu} \in \mathbb{R}^n$ represents the mean value of the random vector $\vec{X}$, and $\boldsymbol{\sigma} \in \mathbb{R}^m$ is an upper bound on the expected value of the dispersion measure $\E[\vec{d}(\vec{X})]$. 
Although the mean-dispersion ambiguity set $\cP_{(\gvec{\mu},\vd)}$ may seem simple, there are numerous practically relevant ambiguity sets that can be recovered by selecting appropriate dispersion functions $\vd(\cdot)$. For example, setting $\vd=\vec{0}$ yields the mean-support ambiguity set, while setting $\vd(\vec{X})=(\vec{X}-\gvec{\mu})(\vec{X}-\gvec{\mu})^{\top}$ enables modeling the correlation structures between the elements of the random vector $\vec{X}$. Other ($\mathcal{D}$-)convex dispersion measures include the Huber loss function, mean absolute deviations, any norm $\norm{\cdot}$ on $\mathbb{R}^n$, and the other convex dispersion measures that were applicable to Proposition~\ref{prop:meandisp}. We will elaborate on some of these dispersion measures in the next subsection.

The third condition states that the objective function $f$ is a convex, piecewise-affine function of the uncertainty $\vx$, for all $\vnu\in\mathcal{V}$, and the decision vector $\vnu$, for all $\vx\in\mathcal{X}$.  We will focus on piecewise-affine objective functions as these are more than sufficient to capture several interesting models. For example, they can capture max operators as in the newsvendor model, as well as the Conditional-Value-at-Risk, which is frequently used to optimize financial portfolios with risk-averse investors. The requirement on the objective function is not strictly necessary and can be relaxed to a much richer class of functions $f(\cdot,\cdot)$ that are convex in $\gvec{\nu}$ for all $\vx$, and concave in $\vec{x}$ for all admissible $\gvec{\nu}$. For a detailed discussion on computational tractability, we refer the interested reader to \cite{wiesemann2014distributionally}.


The dual problem of the inner maximization problem of \eqref{eq:dro}, in the multivariate case, is given by
\begin{equation}\label{eq:multidual}
\begin{aligned}
&\inf_{\lambda_0,\vlambda_1,\vlambda_2,\lambda_3} &    & \lambda_{3}\\
&\text{s.t. } &      &  \lambda_0 + \vlambda_1^\top \gvec{\mu} + \vlambda_2^\top \gvec{\sigma} \leq 0, \\
& &      & \lambda_0 + \vlambda_1^\top \vx + \vlambda_2^\top \vd(\vx) \geq (f(\vnu,\vx)-\lambda_{3}) \1_{\event}(\vx), \ \forall \vx\in\mathcal{X},
\end{aligned}
\end{equation}
with $\lambda_0,\lambda_3\in\mathbb{R}$, $\gvec{\lambda}_1\in\mathbb{R}^n$ and $\gvec{\lambda}_2\in\mathcal{D}^*$.
We assume that condition (C4) holds so that $\sup_{\mathbb{P}\in\cP} \mathbb{E}_{\mathbb{P}}[f(\gvec{\nu},\vec{X}) \mid \vec{X}\in\event]$ is strongly dual to \eqref{eq:multidual}. To be more specific, the Slater condition is given by $\gvec{\mu}\in\operatorname{int}(\mathcal{X})$ and $\vd(\gvec{\mu})\prec_{\mathcal{D}}\gvec{\sigma}$.
The semi-infinite constraint in \eqref{eq:multidual} can be amended using standard robust optimization methods. This yields the following result.

\begin{theorem}[Contextual DRO with mean-dispersion information]\label{thm3}
Let $\P$ be a member of the mean-dispersion ambiguity set $\cP_{(\gvec{\mu},\gvec{\sigma})}$. If conditions \textnormal{(C1)--(C4)} hold, then the objective value of the contextual DRO problem \eqref{eq:dro} coincides with the optimal value of the semi-infinite LP
\begin{equation}\label{eq:contextualDRO}
\begin{aligned}
&\inf_{\gvec{\nu},\lambda_0,\vlambda_1,\vlambda_2,\lambda_3} &    & \lambda_{3}\\
&\textnormal{s.t. } &         &\lambda_0 + \vlambda_1^\top \gvec{\mu} + \vlambda_2^\top \gvec{\sigma} \leq 0, \\
&&&\lambda_0 + \vlambda_1^\top \vec{x} + \vlambda_2^\top \vu \geq 0, \quad &\forall (\vx,\vu) \in \overline{\mathcal{C}},\\
  &&&\lambda_0 + \vlambda_1^\top \vec{x} + \vlambda_2^\top \vu  +\lambda_{3}  \geq \vec{s}_l(\gvec{\nu})^\top \vec{x}  + t_l(\gvec{\nu}), \quad &\forall (\vec{x},\vu)\in {\mathcal{C},\  }\forall l\in\mathcal{L}, \\
  &&& \gvec{\nu}\in\mathcal{V},\ \lambda_0\in\mathbb{R},\  \gvec{\lambda}_1\in\mathbb{R}^n,\  \gvec{\lambda}_2\in\mathcal{D}^*,\ \lambda_3\in\mathbb{R}. &
\end{aligned}
\end{equation}
Here,
\begin{equation}\label{eq:convhull}
\begin{aligned}
{\mathcal{C}} &:= \{(\vec{x},\vu)\in\mathbb{R}^n\times\mathbb{R}^m:\vec{x}\in\event, \ \vd(\vec{x})\preceq_{\mathcal{D}}\vu\}, \\ \overline{\mathcal{C}} &:= \operatorname{conv}\left(\{(\vec{x},\vu)\in\mathbb{R}^n\times\mathbb{R}^m:\vec{x}\in\complevent, \ \vd(\vec{x})=\vu\}\right),
\end{aligned}
\end{equation}
in which $\complevent:=\operatorname{cl}(\mathbb{R}^n\backslash \event)$. Moreover, under additional regularity conditions, problem \eqref{eq:contextualDRO} admits a reformulation as a finite-dimensional conic optimization problem.
\end{theorem}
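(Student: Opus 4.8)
The plan is to treat the outer minimization as essentially inert and concentrate on converting the single semi-infinite constraint of the inner dual \eqref{eq:multidual} into the two conic-set constraints of \eqref{eq:contextualDRO}. Under (C4) we are handed strong duality between $\sup_{\P\in\cP}\E_{\P}[f(\gvec{\nu},\vec{X})\mid\vec{X}\in\event]$ and \eqref{eq:multidual} for each fixed $\gvec{\nu}$; this is precisely where Lemma~\ref{lemma:equiv} and Theorem~\ref{thm1} (the parametric linearization followed by conic duality, with $\lambda_3$ in the role of the Dinkelbach parameter and $\vlambda_2\in\mathcal{D}^*$ arising as the sign-constrained multiplier of the dispersion inequality) do the real work. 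Composing $\inf_{\gvec{\nu}\in\mathcal{V}}$ with the dual minimization $\inf_{\lambda}$ merely merges the two into the joint minimization over $(\gvec{\nu},\lambda_0,\vlambda_1,\vlambda_2,\lambda_3)$, so it remains only to rewrite
\[
\lambda_0+\vlambda_1^\top\vec{x}+\vlambda_2^\top\vd(\vec{x})\ \geq\ \bigl(f(\gvec{\nu},\vec{x})-\lambda_3\bigr)\1_{\event}(\vec{x}),\quad\forall\vec{x}\in\mathbb{R}^n.
\]

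I would first split this constraint according to whether $\vec{x}\in\event$. On the event the indicator is one and, substituting $f=\max_{l}f_l$ from (C3), the single inequality becomes the family $\lambda_0+\vlambda_1^\top\vec{x}+\vlambda_2^\top\vd(\vec{x})+\lambda_3\geq f_l(\gvec{\nu},\vec{x})$ over $l\in\mathcal{L}$; off the event the right-hand side vanishes and only $\lambda_0+\vlambda_1^\top\vec{x}+\vlambda_2^\top\vd(\vec{x})\geq0$ survives. The key step is the epigraph lift: introduce a free $\vu$ with $\vd(\vec{x})\preceq_{\mathcal{D}}\vu$ and observe that, since $\vlambda_2\in\mathcal{D}^*$, one has $\vlambda_2^\top\vu\geq\vlambda_2^\top\vd(\vec{x})$ with equality at $\vu=\vd(\vec{x})$. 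Hence replacing $\vd(\vec{x})$ by a free $\vu$ ranging over the $\mathcal{D}$-epigraph leaves each family unchanged, the binding instance always being the graph. By (C1)--(C2) the event $\event$ is convex and the $\mathcal{D}$-epigraph of $\vd$ is convex, so $\mathcal{C}$ is already convex and furnishes the third line of \eqref{eq:contextualDRO} with no hull needed. On the complement the surviving inequality is affine in $(\vec{x},\vu)$, and an affine inequality holds over a set if and only if it holds over its closed convex hull; passing to $\complevent=\operatorname{cl}(\mathbb{R}^n\backslash\event)$ by continuity and taking the convex hull of the graph reproduces exactly $\overline{\mathcal{C}}$ and the second line. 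This settles the semi-infinite LP equivalence.

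For the finite-dimensional claim I would dualize each semi-infinite constraint separately. Both families require an affine function of $(\vec{x},\vu)$ to be nonnegative — respectively to dominate an affine function — over a conic-representable convex set, which is the canonical setting for robust-optimization duality: the support-function (Lagrangian) dual of each constraint is a finite system of conic inequalities in auxiliary multipliers. The ``additional regularity conditions'' are precisely the Slater-type conditions on $\mathcal{C}$ and $\overline{\mathcal{C}}$ needed for strong conic duality of these subproblems, after which \eqref{eq:contextualDRO} collapses to a finite-dimensional conic program.

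I expect the main obstacle to be the conic representability of $\overline{\mathcal{C}}$. Unlike $\mathcal{C}$, which inherits convexity directly from (C1)--(C2), the set $\overline{\mathcal{C}}$ is the convex hull of a genuinely nonconvex object: the graph of $\vd$ restricted to $\complevent$, nonconvex both because $\complevent$ is the complement of a convex set and because the graph of a $\mathcal{D}$-convex map need not be convex. Producing a tractable conic description of this hull — typically by decomposing $\complevent$ into finitely many convex pieces (for instance a finite union of halfspaces when $\event$ is polyhedral) and representing the hull piecewise — is the crux, and is exactly what the unstated regularity conditions must encode.
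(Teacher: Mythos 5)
Your proposal is correct and follows essentially the same route as the paper: start from the strong dual of the inner problem (guaranteed by (C4)), split the semi-infinite constraint via the indicator into the $\event$ and $\complevent$ families, decompose $f$ into its affine pieces using (C3), lift the dispersion nonlinearity into $(\vec{x},\vec{u})$-space, convexify (convex hull of the graph on $\complevent$, epigraph set $\mathcal{C}$ on $\event$), merge with the outer minimization over $\gvec{\nu}$, and finally dualize each robust counterpart conically under Slater-type conditions on $\mathcal{C}$ and $\overline{\mathcal{C}}$ to get the finite-dimensional program. The only local difference is that you justify the event-side lift through the sign condition $\vlambda_2\in\mathcal{D}^*$ (the graph being the binding instance of the epigraph), whereas the paper lifts to the graph, passes to its convex hull by linearity, and identifies that hull with $\mathcal{C}$ via (C1)--(C2); your justification is equally valid and, if anything, tighter, and your closing observation about the tractability bottleneck being $\overline{\mathcal{C}}$ matches the paper's own discussion of polyhedral event sets.
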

\begin{proof}
The dual problem of the inner maximization problem is given by
\begin{equation}\label{eq:multidual2}
\begin{aligned}
&\inf_{\lambda_0,\vlambda_1,\vlambda_2,\lambda_3} &    & \lambda_{3}\\
&\text{subject to} &      &  \lambda_0 + \vlambda_1^\top \gvec{\mu} + \vlambda_2^\top \gvec{\sigma} \leq 0, \\
& &      & \lambda_0 + \vlambda_1^\top \vx + \vlambda_2^\top \vd(\vx) \geq \1_{\event}(\vx)\big(f(\vnu,\vx)-\lambda_{3}\big), \ \forall \vx\in\mathbb{R}^n.
\end{aligned}
\end{equation}
By decomposing the semi-infinite constraints using the definition of the indicator function, we obtain two semi-infinite constraints,
\begin{equation}\label{eq:DeMorganconstraints}
\begin{aligned}
    &\lambda_0 + \vlambda_1^\top \vx + \vlambda_2^\top \vd(\vx) \geq 0, \quad &\forall \vx\in\mathbb{R}^n\backslash\event, \\
  &\lambda_0 + \vlambda_1^\top \vx + \vlambda_2^\top \vd(\vx)  +\lambda_{3}  \geq f(\vnu,\vx), \quad &\forall \vx\in\event,
\end{aligned}
\end{equation}
in which $\mathbb{R}^n\backslash\event$ is the complement of $\event$. Since $\vlambda \in \mathcal{D}^*$ and $\vd(\vx)$ is $\mathcal{D}$-convex by assumption, it holds that $ \lambda_0 + \vlambda_1^\top \vx + \vlambda_2^\top \vd(\vx)$ is a convex function of $\vx$ (see, e.g., \cite[p.~110]{boyd2004convex}) and, a fortiori, continuous in $\vx$. From a standard continuity argument, it then follows that we are allowed to replace the complement with its closure, $\complevent$.  Since $f(\vnu,\vx)$ is a convex, piecewise affine function by condition (C3), \eqref{eq:DeMorganconstraints} is equivalent to
$$
\begin{aligned}
&\lambda_0 + \vlambda_1^\top \vx + \vlambda_2^\top \vd(\vx) \geq 0, \quad &\forall \vx\in\complevent, \\
&\lambda_0 + \vlambda_1^\top \vx + \vlambda_2^\top \vd(\vx)  +\lambda_{3}  \geq f_l(\vnu,\vx), \quad &\forall \vx\in\event,\ \forall l\in\mathcal{L}.
\end{aligned}
$$
Then, by lifting the nonlinearity in the uncertainty to the uncertainty set, we obtain the robust counterparts
\begin{equation}\label{eq:DeMorganconstraints2}
\begin{aligned}
    &\lambda_0 + \vlambda_1^\top \vx + \vlambda_2^\top \vu \geq 0, \quad &\forall (\vx,\vu) :\vx\in\complevent,\  \vd(\vx)=\vu,\\
  &\lambda_0 + \vlambda_1^\top \vx + \vlambda_2^\top \vu  +\lambda_{3}  \geq f_l(\vnu,\vx), \quad &\forall (\vx,\vu) :\vx\in\event,\  \vd(\vx)=\vu,\ \forall l\in\mathcal{L}.
\end{aligned}
\end{equation}
As the constraints are linear in the uncertain parameters, we can equivalently use the convex hull of the uncertainty sets
\begin{equation}\label{eq:DeMorganconstraints3}
\begin{aligned}
    &\lambda_0 + \vlambda_1^\top \vec{x} + \vlambda_2^\top \vu \geq 0, \quad &\forall (\vx,\vu) \in \operatorname{conv}\left(\{(\vec{x},\vu):\vec{x}\in\complevent, \ \vd(\vec{x})=\vu\}\right),\\
  &\lambda_0 + \vlambda_1^\top \vx + \vlambda_2^\top \vu  +\lambda_{3}  \geq f_l(\vnu,\vx), \quad &\forall (\vec{x},\vu)\in\textnormal{conv}\big(\{(\vec{x},\vu):\vec{x}\in\event, \vd(\vec{x})=\vu\} \big),\ \forall l\in\mathcal{L},
\end{aligned}
\end{equation}
From conditions (C1) and (C2), it follows that the convex hull of the uncertainty set for the second set of robust counterparts is equivalent to $\mathcal{C}$. It is important to note that for the first set of semi-infinite constraints, the convex hull also forms a convex set.
As the robust counterparts in \eqref{eq:DeMorganconstraints3} constitute an (infinite) intersection of halfspaces with respect to the dual variables $\lambda_0,\vlambda_1,\vlambda_2,\lambda_3$ and the decision vector $\gvec{\nu}$,  it is deduced that the feasible set is convex and \eqref{eq:contextualDRO} is a convex optimization problem. This in turn implies that problem \eqref{eq:contextualDRO} can be phrased as a conic optimization problem, provided the convex sets ${\mathcal{C}}$ and $\overline{\mathcal{C}}$ allow for tractable conic reformulations that meet a Slater condition. To substantiate the second claim, in Appendix~\ref{app:conics} we recast \eqref{eq:contextualDRO} as a finite-dimensional conic program under these supplementary conditions.
This completes the proof.
\end{proof}

The difficulty now lies in reformulating the semi-infinite constraints (or robust counterparts) in the dual problem by constructing explicit expressions for the convex hulls $\mathcal{C}$ and $\overline{\mathcal{C}}$. To address this, robust optimization techniques for nonlinear types of uncertainty can be used; see, for example,  \cite{yanikouglu2019survey,ben2015deriving} for further details. It may be insightful to note here that optimization over $\mathcal{C}$ is generally computationally tractable, provided that the event set and epigraph are (tractable) conic representable, but the reformulation of the semi-infinite constraint that involves the complement is typically not---since it entails optimizing a convex function over a nonconvex set. To simplify the problem, it is reasonable to consider only polyhedral event sets $\event$. This assumption, although sacrificing generality, enhances tractability of the first semi-infinite constraint. Using De Morgan's laws, the semi-infinite constraint involving $\complevent$ can be decomposed into optimization over halfspaces. We consider such an event set in the next subsection. 
It also seems noteworthy to mention here that the distribution-free analysis of \eqref{eq:contextualDRO} shares many similarities with the literature based on uncertainty quantification \citep{hanasusanto2015distributionally,hanasusanto2017ambiguous}, and distributionally robust convex optimization \citep{wiesemann2014distributionally}.
However, in contrast to Theorem~5 in \cite{wiesemann2014distributionally}, we apply a lifting argument when solving the dual problem, rather than during the construction of the ambiguity set. In the next section, we show that Theorem~\ref{thm3} leads to computationally tractable models for appropriate choices of $\cP$ and $\event$.


\subsection{Some examples for mean-dispersion information}\label{sec:DROmeandisp}
For the sake of exposition, we limit our attention to two types of ambiguity sets, which are analogous to Propositions~\ref{prop:meanvar} and Proposition~\ref{prop:mad} in Section~\ref{sec:meandisp}. Further, for the sake of simplicity, we assume that the event set is defined by a halfspace; that is,
$$
\Xi_{\vec{z}} = \{\vec{z}\in\mathbb{R}^{n_\vec{z}}:\vec{c}^{\top}\vec{z}\leq \bar{c}\},
$$
with $\vec{c}\in\mathbb{R}^{n_{\vec{z}}}$ and $\bar{c}\in\mathbb{R}$.
Since $\mathcal{Y}=\mathbb{R}^{n_{\vec{y}}}$, $\event$ is unrestricted in the outcome space. We have chosen this specific setup so that, in the remainder of this section, we can obtain computationally tractable conic reformulations. In this context, “computationally tractable” means that our problems can be formulated as linear, conic-quadratic or, to a lesser degree, semidefinite programs so that we are able to use mature, off-the-shelf solvers for conic optimization. The derivations of these conic programs are provided in the Appendix.

We first construct a Chebyshev-type ambiguity set \citep{delage2010distributionally,vandenberghe2007generalized}, which allows us to impose conditions on the covariance matrix of the random vector $\vec{X}$. Let $\E[\vec{X}]=\gvec{\mu}$ denote the mean vector, and define the dispersion measure as $\vec{d}(\vx)=(\vx-\gvec{\mu})(\vx-\gvec{\mu})^\top$. We identify $\mathcal{D}$ with the cone of positive
semidefinite matrices. The Chebyshev ambiguity set then consists of all distributions with mean $\gvec{\mu} \in \mathbb{R}^n$ and covariance matrix bounded above by $\boldsymbol{\Sigma} \in \mathbb{S}_{+}^n$. It can be defined as
$$
\mathcal{P}_{(\gvec{\mu},\gvec{\Sigma})}=\left\{\mathbb{P} \in \mathcal{P}_0\left(\mathbb{R}^n\right): \mathbb{E}_{\mathbb{P}}[\vec{X}]=\boldsymbol{\mu},\  \mathbb{E}_{\mathbb{P}}\left[(\vec{X}-\boldsymbol{\mu})(\vec{X}-\boldsymbol{\mu})^{\top}\right] \preccurlyeq \boldsymbol{\Sigma}\right\}.
$$
When we consider this ambiguity set in conjunction with the half-space event set, the constraints in \eqref{eq:contextualDRO} can be described by LMIs. Hence, Theorem~\ref{thm3} yields the following result.


\begin{corollary}[Chebyshev ambiguity set]\label{cor:Chebyshev}
Suppose conditions \textnormal{(C1)}--\textnormal{(C4)} are satisfied. Let $\event_{\vec{z}}$ be defined by a halfspace. Then, for $\cP=\mathcal{P}_{(\gvec{\mu},\gvec{\Sigma})}$, the contextual DRO problem \eqref{eq:contextualDRO} can be reformulated as a semidefinite optimization problem. 
\end{corollary}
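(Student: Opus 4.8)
The plan is to take the semi-infinite LP \eqref{eq:contextualDRO} furnished by Theorem~\ref{thm3} as the starting point and to show that, for the Chebyshev data together with a halfspace event, each of its two families of semi-infinite constraints collapses to a finite system of linear matrix inequalities (LMIs). First I would specialize the notation. With $\vd(\vx)=(\vx-\gvec{\mu})(\vx-\gvec{\mu})^{\top}$ and $\mathcal{D}=\mathbb{S}_+^n$, the epigraph variable $\vu$ becomes a symmetric matrix $\vec{U}$, the dual variable $\vlambda_2$ becomes a matrix $\boldsymbol{\Lambda}_2\in\mathcal{D}^*=\mathbb{S}_+^n$, and the pairing $\vlambda_2^{\top}\vu$ becomes the trace inner product $\langle\boldsymbol{\Lambda}_2,\vec{U}\rangle$. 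Because $\event_{\vec{z}}$ is the halfspace $\{\vec{z}:\vec{c}^{\top}\vec{z}\le\bar c\}$ and $\mathcal{Y}=\mathbb{R}^{n_\vec{y}}$, the event $\event$ is a halfspace $\{\vx:\vec{a}^{\top}\vx\le\bar c\}$ in $\vx$-space, with $\vec{a}$ the embedding of $\vec{c}$, and crucially its set-complement $\complevent=\{\vx:\vec{a}^{\top}\vx\ge\bar c\}$ is again a convex halfspace.

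Next I would eliminate the epigraph variable from both semi-infinite constraints, reducing each to a statement about a scalar quadratic in $\vx$ alone. For the constraint over $\mathcal{C}$, monotonicity does the work: since $\boldsymbol{\Lambda}_2\succeq 0$, the map $\vec{U}\mapsto\langle\boldsymbol{\Lambda}_2,\vec{U}\rangle$ is nondecreasing in the PSD order, so the constraint is tightest at the minimal admissible $\vec{U}=\vd(\vx)$; this yields $(\vx-\gvec{\mu})^{\top}\boldsymbol{\Lambda}_2(\vx-\gvec{\mu})+\vlambda_1^{\top}\vx+\lambda_0+\lambda_3\ge \vec{s}_l(\gvec{\nu})^{\top}\vx+t_l(\gvec{\nu})$ for all $\vx\in\event$ and all $l\in\mathcal{L}$. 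For the constraint over $\overline{\mathcal{C}}=\operatorname{conv}(\{(\vx,\vd(\vx)):\vx\in\complevent\})$, the left-hand side is affine in $(\vx,\vec{U})$, so its infimum over the convex hull is attained on the generator; restricting to the generator and substituting $\vec{U}=\vd(\vx)$ gives $(\vx-\gvec{\mu})^{\top}\boldsymbol{\Lambda}_2(\vx-\gvec{\mu})+\vlambda_1^{\top}\vx+\lambda_0\ge 0$ for all $\vx\in\complevent$. Both are of the form ``a convex quadratic is nonnegative on a halfspace.''

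I would then dualize each quadratic-over-halfspace constraint with the S-procedure (S-lemma; see, e.g., \cite{boyd2004convex}). Because a halfspace possesses an interior point, the S-lemma applies losslessly: the complement constraint is equivalent to the existence of a multiplier $\tau_0\ge 0$ making $(\vx-\gvec{\mu})^{\top}\boldsymbol{\Lambda}_2(\vx-\gvec{\mu})+\vlambda_1^{\top}\vx+\lambda_0-\tau_0(\vec{a}^{\top}\vx-\bar c)$ globally nonnegative, and each $l$-indexed constraint over $\event$ is equivalent to a multiplier $\tau_l\ge 0$ making the analogous quadratic, now with the affine slack $\bar c-\vec{a}^{\top}\vx$, globally nonnegative. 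A quadratic form in $\vx$ is globally nonnegative if and only if its $(n+1)\times(n+1)$ homogenization is positive semidefinite, turning each constraint into an LMI. The decisive bookkeeping point is that every entry of these homogenized matrices is affine in the decision variables $(\gvec{\nu},\lambda_0,\vlambda_1,\boldsymbol{\Lambda}_2,\lambda_3,\tau_0,\{\tau_l\})$: the terms $\boldsymbol{\Lambda}_2\gvec{\mu}$ and $\gvec{\mu}^{\top}\boldsymbol{\Lambda}_2\gvec{\mu}$ are linear in $\boldsymbol{\Lambda}_2$ since $\gvec{\mu}$ is fixed data, and $\vec{s}_l,t_l$ are affine in $\gvec{\nu}$ by (C3), so no bilinear products arise. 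Collecting the linear objective $\lambda_3$, the scalar constraint $\lambda_0+\vlambda_1^{\top}\gvec{\mu}+\langle\boldsymbol{\Lambda}_2,\boldsymbol{\Sigma}\rangle\le 0$, the membership $\boldsymbol{\Lambda}_2\in\mathbb{S}_+^n$, the nonnegativities $\tau_0,\tau_l\ge 0$, the (conic-representable) constraint $\gvec{\nu}\in\mathcal{V}$, and the LMIs then exhibits \eqref{eq:contextualDRO} as a semidefinite program.

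The step I expect to require the most care is the complement constraint. In general, reformulating the constraint over $\overline{\mathcal{C}}$ is the hard part, since it asks for a convex function to be nonnegative on a nonconvex set, namely the complement of $\event$; the choice of a halfspace event is precisely what renders $\complevent$ convex, so that the generator reduction followed by a single application of the S-lemma suffices. I would therefore take pains to (i) justify that the infimum of the affine objective over $\overline{\mathcal{C}}$ is governed by its generator, so that no loss is incurred in passing to the quadratic-over-halfspace form, and (ii) verify the interior/Slater hypothesis of the S-lemma for both halfspaces, which is what guarantees the LMIs are an exact --- rather than merely conservative --- reformulation. The remaining manipulations (the monotone elimination of $\vec{U}$ and the homogenization) are routine.
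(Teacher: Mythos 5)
Your proposal is correct and follows essentially the same route as the paper: both reduce the two semi-infinite constraint families of \eqref{eq:contextualDRO} to nonnegativity of convex quadratics over the halfspaces $\event$ and $\complevent$, apply the S-lemma losslessly (each halfspace has an interior point), and assemble the resulting LMIs together with the linear moment constraint into a semidefinite program. The only difference is that you spell out the elimination of the lifted epigraph variable $\vec{U}$ --- via monotonicity of $\langle\boldsymbol{\Lambda}_2,\cdot\rangle$ in the PSD order on $\mathcal{C}$ and the generator reduction for $\overline{\mathcal{C}}$ --- a routine step the paper's proof performs implicitly.
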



Alternatively, the MAD
can be used as dispersion measure \citep{postek2018robust}. Let $\vec{m}\in\mathbb{R}^n$ represent some center point, in our case the mean. Assume that we have bounds for the componentwise mean deviations $\mathbb{E}[|\vec{X}-\vec{m}|]$ and the pairwise mean deviations $\mathbb{E}[|({X}_i\pm X_j) -(m_i\pm m_j)|]$, which are given by $\mathbf{f} \in \mathbb{R}^{n^2}$. This information results in the ambiguity set
$$
\begin{aligned}
\mathcal{P}_{(\vec{m},\vec{f})}=\left\{\mathbb{P} \in \mathcal{P}_0\left(\mathbb{R}^n):\right. \mathbb{E}_{\mathbb{P}}[\vec{X}]=\vec{m}, \ \E[|X_i-m_i|]\leq f_{i,i},\forall i,\  \mathbb{E}[|({X}_i\pm X_j) -(m_i\pm m_j)|]\leq f_{i,j}, \forall i\neq j\right\}.
\end{aligned}
$$
For this ambiguity set, the constraints in \eqref{eq:contextualDRO} are representable as linear inequalities. Therefore, Theorem~\ref{thm3} leads to the following result.

\begin{corollary}[MAD ambiguity set]\label{cor:MAD}
Suppose that conditions \textnormal{(C1)}--\textnormal{(C4)} hold. Let $\event_{\vec{z}}$ be defined by a halfspace. Then, for $\cP=\mathcal{P}_{(\vec{m},\vec{f})}$, the contextual DRO problem \eqref{eq:contextualDRO} can be reformulated as a linear optimization problem. 
\end{corollary}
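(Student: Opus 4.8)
The plan is to specialize Theorem~\ref{thm3} to the MAD ambiguity set $\mathcal{P}_{(\vec{m},\vec{f})}$ and then show that, once the event $\event_{\vec{z}}$ is a halfspace, each semi-infinite constraint in \eqref{eq:contextualDRO} collapses to a finite system of linear inequalities. First I would record the data of the instance: the dispersion map $\vd(\vx)$ stacks the functions $\vx\mapsto|x_i-m_i|$ and $\vx\mapsto|(x_i\pm x_j)-(m_i\pm m_j)|$, each of which is a convex, piecewise-affine function of $\vx$, and the associated cone is the nonnegative orthant $\mathcal{D}=\mathbb{R}^m_+$, which is self-dual, so $\vlambda_2\in\mathcal{D}^*=\mathbb{R}^m_+$. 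Since conditions (C1)--(C4) are in force, Theorem~\ref{thm3} applies, and it suffices to render the two semi-infinite constraint families in \eqref{eq:contextualDRO} — one over $\mathcal{C}$ and one over $\overline{\mathcal{C}}$ — as linear constraints in the decision variables $(\gvec{\nu},\lambda_0,\vlambda_1,\vlambda_2,\lambda_3)$.

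For the constraints over $\mathcal{C}$, I would use that $\event$ is a halfspace and that $\vd(\vx)\preceq_{\mathcal{D}}\vu$ is a finite list of scalar bounds $u_k\geq|\cdot|$, each equivalent to two affine inequalities, so that $\mathcal{C}$ is a polyhedron. Because $\vlambda_2\geq\vec{0}$, the left-hand side is nondecreasing in $\vu$, so the binding direction is $\vu=\vd(\vx)$ and the constraint reduces to requiring that the convex, piecewise-affine function $\lambda_0-t_l(\gvec{\nu})+\lambda_3+(\vlambda_1-\vec{s}_l(\gvec{\nu}))^\top\vx+\vlambda_2^\top\vd(\vx)$ be nonnegative for all $\vx\in\event$. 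Lifting each absolute value to an epigraph variable — exact in the ensuing minimization precisely because $\vlambda_2\geq\vec{0}$ — turns ``this convex function is nonnegative on the halfspace'' into an inner linear program in $(\vx,\vu)$ whose constraint data ($\vec{c},\bar{c}$ and the coefficients of $\vd$) are fixed and whose objective coefficients are affine in $(\gvec{\nu},\lambda)$. Dualizing this inner LP yields finitely many linear inequalities in $(\gvec{\nu},\lambda)$ together with fresh nonnegative multiplier variables.

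The constraint over $\overline{\mathcal{C}}$ is the crux, and the halfspace assumption is exactly what keeps it tractable. Here I would first invoke that an affine inequality holds over a convex hull if and only if it holds over the generating set, so the constraint over $\overline{\mathcal{C}}=\operatorname{conv}(\{(\vx,\vu):\vx\in\complevent,\ \vd(\vx)=\vu\})$ is equivalent to $\lambda_0+\vlambda_1^\top\vx+\vlambda_2^\top\vd(\vx)\geq 0$ for all $\vx\in\complevent$. Because $\event$ is a single halfspace, its complement-closure $\complevent$ is again a single halfspace, so — unlike the general setting flagged after Theorem~\ref{thm3} — no De~Morgan decomposition is needed and one stays on a convex region. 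With $\vlambda_2\geq\vec{0}$ the left-hand side is once more convex and piecewise-affine, so the same epigraph-lift-and-dualize scheme applies: minimize a convex piecewise-affine function over a halfspace, demand that the value be nonnegative, and dualize the resulting LP to obtain linear constraints.

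Assembling the pieces, the objective $\lambda_3$ is linear, $\gvec{\nu}$ enters only through the affine maps $\vec{s}_l,t_l$, and all reformulated constraints are linear in $(\gvec{\nu},\lambda_0,\vlambda_1,\vlambda_2,\lambda_3)$ and in the auxiliary dual multipliers, whence \eqref{eq:contextualDRO} becomes a finite-dimensional linear program. I expect the main obstacle to be the $\overline{\mathcal{C}}$ constraint: one must argue carefully that (i) the complement of the halfspace is convex, so the problem does not degenerate into maximizing a convex function over a nonconvex set; (ii) the epigraph relaxation of the absolute values is exact, which hinges on $\vlambda_2\in\mathbb{R}^m_+$; and (iii) no second-order-cone or semidefinite terms are introduced at any stage, so the final program is genuinely linear rather than merely conic.
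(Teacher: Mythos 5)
Your proposal is correct and follows essentially the same route as the paper: both lift the componentwise absolute deviations into epigraph variables via the two affine inequalities $\vec{u}\geq\pm\vec{d}(\vec{x})$, exploit that the halfspace event and its complement-closure are both halfspaces so each uncertainty set is a polyhedron, and then apply LP duality to the inner minimization to replace each semi-infinite constraint by finitely many linear inequalities with fresh nonnegative multipliers. Your explicit justifications — that the affine constraint over $\overline{\mathcal{C}}$ can be checked on the generating set, and that the epigraph relaxation is exact because $\vlambda_2\in\mathbb{R}^{n^2}_+$ — are steps the paper leaves implicit, but they do not change the argument.
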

The precise mathematical models are relegated to Appendix~\ref{app:conics}.
Although we have obtained computationally tractable models, it is not immediately clear how the ambiguity sets and side information interact, or under which conditions the contextual DRO problem reduces to its robust counterpart, $\inf_{\gvec{\nu}}\sup_{\vec{y}}f(\gvec{\nu},\vec{y})$, as discussed in Section~\ref{section3}. Related to this, we would like to mention the broad class of nested ambiguity sets that were introduced in \cite{wiesemann2014distributionally}, which encompass several distance-based ambiguity sets as special types of generalized-moment ambiguity sets. The reason for this is that distance-based ambiguity sets
can be defined by a finite number of (conditional) expectation constraints based on generalized moments \citep{chen2020robust}. The distance-based ambiguity sets are particularly interesting as they provide an explicit way to relate the “sizes” of ambiguity sets and event sets. This makes it possible to quantify when the contextual DRO problem becomes “uninformative.”
For an excellent discussion on the interplay between a distance-based ambiguity set based on optimal transport and the size of the event set, see \cite{nguyen2021robustifying}. As our framework applies to generalized moments, it is possible to extend Theorem~\ref{thm3} to include nested and distance-based ambiguity sets. 
Furthermore, as discussed in Section~\ref{section3}, we can obtain tighter bounds by imposing structural properties on the base ambiguity set $\cP_0$. However, both extensions
would entail delving into many technical details. As these might detract from the main focus of this expository section, we leave them to the avid reader.

\subsection{Newsvendor example}\label{sec:newsvendor}
To gain a better understanding of our contextual DRO framework, we now provide a numerical illustration of our models using a well-known problem in OR, namely the (single-item) newsvendor model. In contrast to the traditional model, we pose the problem in the form $\E_{\P}[f(\gvec{\nu},\vec{X})\mid \vec{Z}\in\event_{\vec{z}}]$ to allow for the inclusion of side information. For further insights, please refer to \cite{ban2019big} and the references therein, which discuss the data-driven newsvendor model with feature information. 

Suppose that the newsvendor trades in a single product. Before observing the product demand $D$, the newsvendor places an order for $q$ units of the product. Once the demand is realized, the newsvendor sells all available stock. Any unsatisfied demand results in backorders incurring a penalty cost of $p$ per unit, and any leftover inventory is penalized with a holding cost of $h$ per unit. The newsvendor aims to minimize the total holding and penalty costs, as a function of the order quantity $q\geq0$ and the demand variable $D$,
\begin{equation}\label{eq:newsobj}
\begin{aligned}
    C(q,D) &= h(q-D)^+ + p(D-q)^+\\
    &= h(q-D) + (h+p)(D-q)^+,
\end{aligned}
\end{equation}
with $(x)^+:=\max\{x,0\}$. It is evident that the cost function $C$ is piecewise affine, consistent with condition (C3).
The key distinction from the traditional setting lies in the newsvendor's ability to observe a covariate $Z$, which influences the demand variable $D$, prior to making the ordering decision.
Hence, the contextual DRO problem can be posed as
$$
\inf_{q\geq0} \sup_{\P\in\mathcal{P}_{(\gvec{\mu},\gvec{\Sigma})}} \E_\P[C(q,D)\mid Z\in\event].
$$
The primary objective is to minimize the total costs incurred by the newsvendor while incorporating the information provided about the feature, namely, that $Z\in\event_{\vec{z}}$.


\begin{figure}[h!]
\begin{subfigure}{0.5\textwidth}
    \centering
        \includegraphics{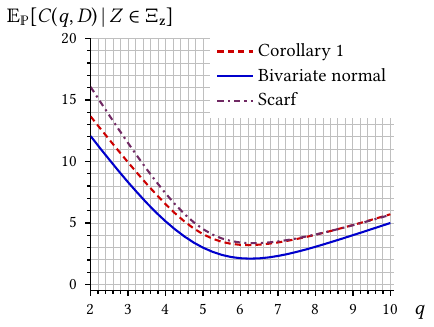}
        \caption{$\event_{\vec{z}}=\{Z\geq1\}$, $\varrho=0$}
        \label{fig:news0}
\end{subfigure}%
\begin{subfigure}{0.5\textwidth}
    \centering
        \includegraphics{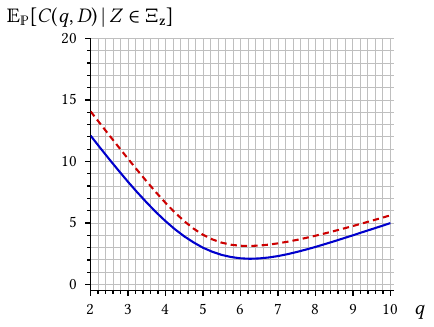}
        \caption{$\event_{\vec{z}}=\{Z\geq1\}$, $\varrho=0.95$}
        \label{fig:news00}
\end{subfigure}
\vspace{0.25cm}
\begin{subfigure}{0.5\textwidth}
    \centering
        \includegraphics{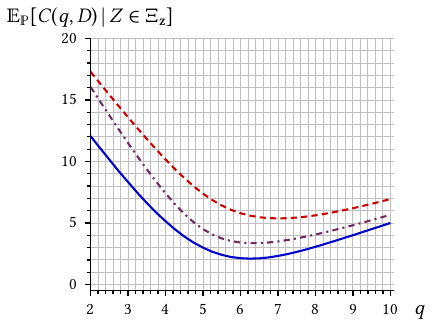}
        \caption{$\event_{\vec{z}}=\{Z\geq4\}$, $\varrho=0$}
        \label{fig:news000}
\end{subfigure}%
\begin{subfigure}{0.5\textwidth}
    \centering
        \includegraphics{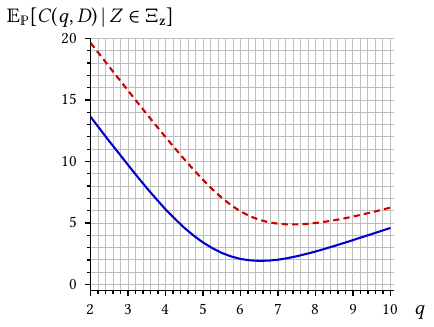}
        \caption{$\event_{\vec{z}}=\{Z\geq4\}$, $\varrho=0.95$}
        \label{fig:news0000}
\end{subfigure}
\vspace{0.2cm}
\caption{Newsvendor cost bounds for different order quantities with dependent and independent demand}
\label{fig:newsbounds}
\end{figure}

To account for the dependence between the demand variable $D$ and the feature $Z$, we introduce a covariance matrix to construct the mean-variance ambiguity set. We assume the ground truth is given by a bivariate normal distribution, characterized by its mean vector $\gvec{\mu}=(5,5)$ and covariance matrix $\gvec{\Sigma}$. The diagonal elements of this matrix are $\sigma^2_{1,1}=2.25,\ \sigma^2_{2,2}=1$, while the off-diagonal elements are determined by the correlation coefficient $\varrho$, representing the dependence between the demand and the feature. We distinguish between two levels of correlation: no correlation, which is represented by $\varrho=0$, and high positive correlation, which is represented by $\varrho=0.95$. Further, we let the holding cost $h=1$, and the penalty cost $p=5$. As for the feature's size, we consider two events: $\{Z\geq1\}$ and $\{Z\geq4\}$. To provide context, think of $Z$ as being demand for an alternative item with similar traits, but which is already partially observed during the planning period.

In Figure~\ref{fig:newsbounds}, the numerical results of the described setting are illustrated, offering several intriguing insights. The plot includes Scarf's bound \citep{Scarf1958} for the zero-correlation setting. Here it is important to note that this bound is valid only for these settings. This is because if the true underlying distribution is a bivariate normal distribution, then the demand $D$ and feature $Z$ are independent when $\varrho=0$. Therefore, a distributionally robust bound that considers only the demand information is sufficient. For the case with zero correlation, Scarf's bound and our bound perform similarly when the event set is of moderate size. However, as the size of the event set decreases, we observe Scarf's bound outperforms our bound. The reason behind this is that the worst-case distribution resulting from optimization over $\cP_{(\boldsymbol{\mu},\boldsymbol{\Sigma})}$ still permits higher orders of dependency. As a result, the value of $Z$ continues to have a significant impact on the bound for the conditional expectation.
Furthermore, it becomes evident that the differences between the conditional-expectation bounds and the true values primarily depend on the size of the event set, rather than that of the correlation coefficient. This is to be expected and aligns with our prior discussions on this topic in previous sections.
Finally, it is essential to keep in mind that the bounds for the conditional expectations continue to diverge as the side information set $\{Z\geq z_0\}$ decreases in size. The only practical way to reduce this conservatism seems to be the inclusion of more information in the ambiguity set.


\section{Conclusions and outlook}\label{section5}
Let us conclude. 
This paper presents a novel framework for bounding conditional expectations that, in contrast to generalized moment-bound problems, can explicitly incorporate side information into the semi-infinite formulation. The key idea is to use a simple transformation to reduce the resulting semi-infinite fractional problem to a semi-infinite LP. The corresponding dual problem highly resembles that of a generalized moment problem, but it involves an additional constraint that models the conditioning on this random event. Fortunately, this slight increase in complexity does not seem to affect significantly the computational tractability of the resulting models. The generalized conditional-bound framework can be used to obtain univariate bounds for different ambiguity sets and general objectives (for, e.g., pricing) through the use of primal-dual arguments. Moreover, it serves as the foundation for a moment-based contextual DRO framework that can be applied to stochastic optimization problems with side information.

We finally mention several potentially interesting avenues for further research. First, it seems of interest to find more applications for the univariate bounds, such as the robust monopoly-pricing problem. Second, alternative applications for the contextual DRO framework, discussed in Section~\ref{section4}, can be investigated. Moreover, it would be beneficial to expand our findings to nested ambiguity sets, as this class of ambiguity includes the distance-based ambiguity sets, which offer a more direct way to answer questions about when a solution becomes “uninformative,” i.e., for which instances the DRO problem reduces to a robust optimization problem. Conducting a comprehensive complexity analysis for the nested ambiguity sets and different types of side information and objective function structures also seems a worthwhile topic to explore. In conclusion, our proposed framework provides a promising approach for bounding conditional expectations while incorporating side information. We anticipate that the suggested directions for future research will contribute to the development and applicability of this framework.


\ACKNOWLEDGMENT{
The author would like to thank Johan van Leeuwaarden, Pieter Kleer and Bas Verseveldt for communicating their versions of Propositions~\ref{prop:meanvar},~\ref{prop:mad} and~\ref{prop:meandisp}, which they derived independently through a more direct approach, essentially solving the primal problem. The author would also like to thank Dick den Hertog for proofreading the first draft of this manuscript.}
 

\medskip


\begin{appendix}
\section{Proofs}\label{app:proofs}
\begin{proof}[Proof of Lemma~\ref{lemma:equiv}.]
Assume, without loss of generality, that the moment constraints are consistent so that $\cP$ is nonempty. Otherwise, both problems are infeasible with optimal value $-\infty$, as per the conventional definition. Let $\{\tilde{\tau},\{\tilde{\P}_k\}_{k\geq1}\}$  and $\{\tau^*,(\tau^*,\{\mathbb{P}_k^*\}_{k\geq1})\}$ be the optimal values, and optimal solutions (maximizing sequences), of \eqref{eq:condmomentprob} and \eqref{eq:altern}, respectively. We can assume maximizing sequences without loss of generality. If the optimal value is exactly achieved by an extremal distribution $\P^*$, we can pick a sequence such that $\lim\limits_{k\to\infty}\P^*_k = \P^*$.
To prove the result, it is sufficient to show that
\begin{equation}\label{eq:reltoshow}
\tau^* = \lim_{k\to\infty}\frac{\E_{\P^*_k}[g(X)\1_{\event}(X)]}{\E_{\P^*_k}[\1_{\event}(X)]} = \lim_{k\to\infty}\frac{\E_{\tilde{\P}_k}[g(X)\1_{\event}(X)]}{\E_{\tilde{\P}_k}[\1_{\event}(X)]} =  \sup_{\P\in\cP} \frac{\E_\P[g(X)\1_{\event}(X)]}{\E_\P[\1_{\event}(X)]} =: \tilde{\tau}.
\end{equation}
The final equality can be attributed to the optimality of $\{\tilde{\P}_k\}$. Note that all values are finite by assumption.
Notice further that
$$
\frac{\E_\P[g(X)\1_{\event}(X)]}{\E_\P[\1_{\event}(X)]} \leq \tilde{\tau},
$$
for all $\P\in\cP$. Rewriting and taking the supremum over $\cP$, the inequality above is equivalent to
$$
\sup_{\P\in\cP} \E_\P[g(X)\1_{\event}(X) - \tilde{\tau}\1_{\event}(X)] \leq 0,
$$
which implies $\tilde{\tau}$ is feasible to problem \eqref{eq:altern}. 
Since $\tau^*$ is a feasible solution to \eqref{eq:altern}, it holds that
\begin{equation}\label{eq:alternconstr}
\sup_{\P \in \mathcal{P}} \mathbb{E}_\P\left[g(X)\1_{\event}(X) -\tau^* \1_{\event}(X)\right] \leq 0.
\end{equation}
Recall that the supremum in \eqref{eq:alternconstr} is achieved by the sequence $\{\mathbb{P}_k^*\}$. Rewriting \eqref{eq:alternconstr}, we obtain
$$
\lim_{k\to\infty}\E_{\mathbb{P}_k^*}[g(X)\1_{\event}(X) - \tau^*\1_{\event}(X)] \leq 0 \iff \tau^* \geq \lim_{k\to\infty}\frac{\E_{\mathbb{P}_k^*}[g(X)\1_{\event}(X)]}{\E_{\mathbb{P}_k^*}[\1_{\event}(X)]},
$$
from which the first identity in \eqref{eq:reltoshow} follows by optimality of $\tau^*$ to problem \eqref{eq:altern}. Since $\tilde{\tau}$ is optimal to \eqref{eq:condmomentprob},
\begin{equation}\label{eq:tautilde}
\tau^* = \lim_{k\to\infty}\frac{\E_{\mathbb{P}_k^*}[g(X)\1_{\event}(X)]}{\E_{\mathbb{P}_k^*}[\1_{\event}(X)]}  \leq  \lim_{k\to\infty}\frac{\E_{\tilde{\P}_k}[g(X)\1_{\event}(X)]}{\E_{\tilde{\P}_k}[\1_{\event}(X)]} =\tilde{\tau}.
\end{equation}
We next show that \eqref{eq:alternconstr} also implies that
\begin{equation}\label{eq:taustar}
\sup_{\P\in\cP} \frac{\E_\P[g(X)\1_{\event}(X)]}{\E_\P[\1_{\event}(X)]}\leq\tau^*.
\end{equation}
That is, the sequence $\{\P^*_k\}$ is also an optimal solution of problem \eqref{eq:condmomentprob}. For the sake of contradiction, assume that for an arbitrary $\epsilon>0$, there exists a sequence of probability measures $\{\tilde{\P}_k\}$ such that
$$
\frac{\E_{\tilde{\P}_k}[g(X)\1_{\event}(X)]}{\E_{\tilde{\P}_k}[\1_{\event}(X)]} > \tau^* + \epsilon,
$$
as $k$ grows sufficiently large. Fixing $\tilde{\P}_k$ for such $k$, we obtain
$$
\E_{\tilde{\P}_k}[g(X)\1_{\event}(X) - \tau^*\1_{\event}(X)] > \epsilon\E_{\tilde{\P}_k}[\1_{\event}(X)].
$$
For $\E_{\tilde{\P}_k}[\1_{\event}(X)]=\tilde{\P}_k(X\in\event)>0$ and $\epsilon>0$, this inequality contradicts \eqref{eq:alternconstr}. Moreover, if $\P_k(X\in\event)=0$, the fractional objective function would be ill-defined, also yielding a contradiction.
Hence, it follows from combining \eqref{eq:tautilde} and \eqref{eq:taustar} that the optimal values of \eqref{eq:condmomentprob} and \eqref{eq:altern} agree, and there exists a sequence of probability distributions that achieves this optimal value in both problems jointly.
This completes the proof. 
\end{proof}

\begin{proof}[Proof of Proposition~\ref{prop:mad}.] Replacing the second-moment function $x^2$ by $|x-\mu|$ and substituting $d$ for $(\sigma^2+\mu^2)$ in \eqref{eq:dual} yields the dual problem for $\sup_{\mathbb{P}\in\mathcal{P}_{(\mu,d)}}\E[X|X\geq t]$,
\begin{equation}
\begin{aligned}
&\inf_{\lambda_0,\lambda_1, \lambda_2, \lambda_3} &  & \lambda_3 &\\
&\text{subject to} &      & \lambda_0 + \lambda_1 \mu + \lambda_2 d\leq 0, &\\
& &      & \lambda_0+\lambda_1 x+\lambda_2 |x-\mu| \geq 0, \ &\forall a\leq x<t, \\
& &      & \lambda_0+\lambda_1 x+\lambda_2 |x-\mu|   \geq x - \lambda_3, \ &\forall t \leq x \leq b. \\
\end{aligned}
\end{equation}

Denote the left-hand sides of the second and third constraints by $D(x)\coloneqq\lambda_0+\lambda_1 x +\lambda_2 |x-\mu|$. The function $D(x)$ is dual feasible when it is greater than or equal to 0 for $x<t$ and greater than or equal to $x-\lambda_3$ for $t\leq x \leq b$. First, we consider the case $t<\mu$. To solve the dual problem, we shall consider three cases for the shape of the dual function $D(x)$, as illustrated in Figure~\ref{fig:majorsE[X|X>t]meanMAD}.

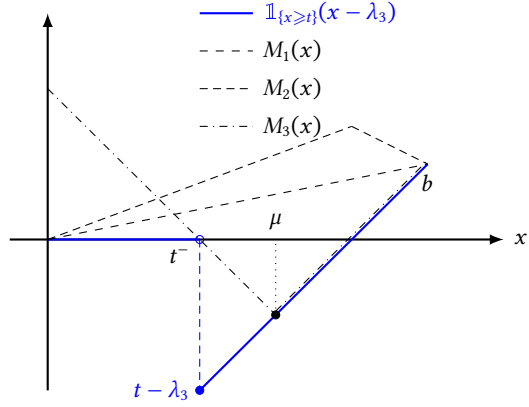
\begin{figure}[h!]
\begin{center}
\begin{tikzpicture}[scale=1,
    declare function={identityx(\x)=0*(\x<=2.5)+(\x-5)*(\x>=2.5);}
    ]
    \draw[-latex,thick] (-.5, 0) -- (6, 0) node[right] {\footnotesize$x$};
    \draw[-latex,thick] (0, -2) -- (0, 3);
    \draw[blue,thick] (2,3) -- (2.7,3) node[right] {\footnotesize$\1_{\{x\geq t\}}(x-\lambda_3)$};
    \draw[dashed] (2,2.5) -- (2.7,2.5) node[right] {\footnotesize$M_1(x)$};
    \draw[densely dashed] (2,2.0) -- (2.7,2.0) node[right] {\footnotesize$M_2(x)$};
    \draw[dash dot] (2,1.5) -- (2.7,1.5) node[right] {\footnotesize$M_3(x)$};
    \draw[scale=1, domain=0:2, smooth, variable=\x, blue, thick]  plot ({\x}, {0});
    \draw[densely dashed, blue] (2,0)--(2,-2);
    \draw[densely dashed, blue] (2,-2)--(2,-2) node[left]{\footnotesize$t-\lambda_3$};
    \draw[blue, thick] (2,-2)--(5,1);
    \filldraw[black] (3,-1) circle (1.5pt);
    \draw[blue] (2,0) circle (1.5pt);
    \filldraw[blue] (2,-2) circle (1.5pt);
    \draw[dotted] (3,-1)--(3,0) node[above]{\footnotesize$\mu$};
    \draw[dotted] (2,0)--(2,0)  node[below left]{\footnotesize$t^{-}$};
    \draw[dashed] (0,.0)--(5,1) node[below]{\footnotesize$b$};
    \draw[densely dashed] (0,0)--(4,1.5);
    \draw[densely dashed] (4,1.5)--(5,1);
    \draw[dash dot] (0,2)--(3,-1);
    \draw[dash dot] (3,-0.95)--(5,1.05);
\end{tikzpicture}
\end{center}
\centering
\caption{$D_1(x)$, $D_2(x)$, and $D_3(x)$}\label{fig:majorsE[X|X>t]meanMAD}
\end{figure}

First, we discuss the case where $D(x)$ is a straight line (i.e.,~$\lambda_2=0$). If $\lambda_1=0$, then $D_1(x)$ is a horizontal line, which is dual feasible because, for a suitable choice of $\lambda_3$, this function lies above $\1_{\{x\geq t\}}(x-\lambda_3)$. Note that for this solution to satisfy $\lambda_0 + \lambda_1 \mu + \lambda_2 d\leq 0$, the constant $\lambda_0=0$. Since we are minimizing $\lambda_3$ (or maximizing $-\lambda_3$), we push the function $x-\lambda_3$ upward until it hits the horizontal line, so this choice for the dual function yields $b$ as the optimal objective value. The dual function $D_1(x)$ cannot have a positive slope because this would imply $\lambda_0+\lambda_1\mu>0$. 
Note that a line with negative slope $\lambda_1<0$ will only increase $\lambda_3$. Hence, the horizontal line with $\lambda_0=0$ is the best feasible option. A primal solution that attains this value is the distribution with support $\left\{\frac{b (d-2 \mu )+2 \mu ^2}{-2 b+d+2 \mu },b  \right\}$ and probabilities
$$
p_1 = 1 - \frac{d}{2(b-\mu)},\quad p_2 = \frac{d}{2(b-\mu)}.
$$

Using complementary slackness, we argue that the second case, $D_2(x)$, can be omitted. From Figure~\ref{fig:majorsE[X|X>t]meanMAD}, observe that the corresponding primal solution is supported on the values $a,b$. However, we cannot construct a two-point distribution that, in general, satisfies the moment constraints. Therefore, this second case does not provide a useful solution from which we can obtain a tight bound.

For the third case (wedge), let $D_3(x)$ coincide with $\1\{x\geq t\}(x-\lambda_3)$ at $x=t$, $\mu$ and $b$. Choosing $D_3(x)$ in this particular way, the dual variables satisfy
$$
\lambda_0=\frac{(t+\lambda_3)\mu-2t\lambda_3}{2(t-\mu)}, \ \lambda_1=\frac{\lambda_3+t-2\mu}{2(t-\mu)}, \ \lambda_2=\frac{t-\lambda_3}{2(t-\mu)}.
$$
Substituting these values into $\lambda_0 + \lambda_1 \mu + \lambda_2 d\leq0$, we obtain
$$
-\lambda_3+\mu+\frac{d(\lambda_3-t)}{2(\mu-t)}\leq0,
$$
in which the left-hand side is a decreasing (linear) function of $\lambda_3$ for $t<\mu-\frac{d(b-\mu)}{2(b-\mu)-d}$. Since we are minimizing with respect to $\lambda_3$, we choose $\lambda_3$ such that equality is attained. Hence, $\lambda_3^*=\mu+\frac{d(\mu-t)}{2(\mu-t)-d}$. The distribution with support $\left\{t^-, \lambda^*_3\right\}$, and respective probabilities
$$
p_1 = \frac{d}{2(\mu-t)},\quad p_2 = 1-\frac{d}{2(\mu-t)},
$$
achieves $\lambda^*_3$ asymptotically.
Combining the two feasible cases, and ensuring these bounds are tight by constructing primal feasible solutions that (asymptotically) achieve these bounds, we arrive at the desired result.
\end{proof}

\begin{proof}[Proof of Proposition~\ref{prop:meandisp}.]
In this setting, the dual is given by
\begin{equation}\label{eq:dualmeandisp}
\begin{aligned}
&\inf_{\lambda_0,\lambda_1, \lambda_2, \lambda_3} &  & \lambda_3 &\\
&\text{subject to} &      & \lambda_0 + \lambda_1 \mu + \lambda_2 d\leq 0, &\\
& &      & \lambda_0+\lambda_1 x+\lambda_2 d(x) \geq 0, \ &\forall x<t, \\
& &      & \lambda_0+\lambda_1 x+\lambda_2 d(x)   \geq x - \lambda_3, \ &\forall x\geq t. \\
\end{aligned}
\end{equation}
Define $V(x):=\lambda_0+\lambda_1 x+\lambda_2 d(x)$. Analogous to the variance and MAD settings, a candidate solution is the majorant $V(x)$ that touches at $x=t^-$, and is tangent to $x-\lambda_3$ at a point $x_0>t$ that will be determined a posteriori. Using these insights, we solve the following system of equations to determine $p_{t},p_{x_0}$ and the dual variables $\lambda_0,\lambda_1,\lambda_2,\lambda_3$ (with $x_0$ fixed):
$$
\begin{aligned}
& p_{t} t + p_{x_0} x_0 = \mu,\ p_{t} d(t) + p_{x_0} d(x_0) = \bar{\sigma}, \\
& \lambda_0 + \lambda_1 t + \lambda_2 d(t) = 0,\  \lambda_0 + \lambda_1 x_0 + \lambda_2 d(x_0) = x_0 - \lambda_3 \\
& \lambda_1 + \lambda_2 d'(x_0) = 1,\ \lambda_0 + \lambda_1 \mu + \lambda_2 (\mu^2 + \sigma^2) = 0,
\end{aligned}
$$
where the first line contains the moment constraint, the second and third line fix the shape of $V(x)$, and finally, we assume that the dual constraint $\lambda_0 + \lambda_1 \mu + \lambda_2 (\mu^2 + \sigma^2) \leq 0$ is tight.
The derivative $d'(x)$ is assumed to be the right derivative, in order to allow for nondifferentiable dispersion functions.
Notice that the resulting dual solution is always feasible, since $V(x)$ is convex ($\lambda_2\geq0$ is a necessary condition for dual feasibility), and therefore, $V(x)\geq (x-\lambda_3)\1_{\event}(x),\ \forall x$, by the constraints $V'(x_0)=1$ and $V(t)=0$. Solving the equations, one obtains
$$
\begin{aligned}
&p_{t} = \frac{\bar{\sigma} t-\mu  d(t)}{t d(x_0)-x_0 d(t)},\ p_{x_0}= \frac{\bar{\sigma}  x_0-\mu  d(x_0)}{x_0 d(t)-t d(x_0)},\ \lambda_3 = \frac{-(t-\mu ) \left(d(x_0)-x_0 d'(x_0)\right)-\mu  d(t)+\bar{\sigma} t}{(t-\mu ) d'(x_0)-d(t)+\bar{\sigma}}, \\
\end{aligned}
$$
and
$$
\begin{aligned}
&\lambda_0 = \frac{\mu  d(t)-\bar{\sigma}  t}{(t-\mu ) d'(x_0)-d(t)+\bar{\sigma} },\ \lambda_1 = \frac{\bar{\sigma} -d(t)}{(t-\mu ) d'(x_0)-d(t)+\sigma },\ \lambda_2 = \frac{t-\mu }{(t-\mu ) d'(x_0)-d(t)+\bar{\sigma} }.
\end{aligned}
$$
To guarantee strong duality, we choose $x_0$ such that 
$$
x_0 = \frac{-(t-\mu ) \left(d(x_0)-x_0 d'(x_0)\right)-\mu  d(t)+\bar{\sigma} t}{(t-\mu )d'(x_0)-d(t)+\bar{\sigma}} = \lambda_3  \iff \frac{(t-x_0)\bar{\sigma} + (x_0-\mu)d(t) + (\mu-t)d(x_0)}{(t-\mu )d'(x_0)-d(t)+\bar{\sigma}} = 0,
$$
and the normalization constraint $p_t+p_{x_0} = 1$ hold. Both conditions are equivalent to 
$$
(t-x_0)\bar{\sigma} + (x_0-\mu)d(t) + (\mu-t)d(x_0) = 0.
$$
Consequently, the $x^*_0$ that follows from solving
$$
\frac{\bar{\sigma} t-\mu  d(t)}{t d(x_0)-x_0 d(t)} +  \frac{\bar{\sigma}  x_0-\mu  d(x_0)}{x_0 d(t)-t d(x_0)} = 1,
$$
is optimal. Hence, the claim follows.
\end{proof}

\begin{proof}[Proof of Proposition~\ref{prop:sym}.]
The class of symmetric pairs of Dirac measures (i.e., $\delta_{\mu-x},\ \delta_{\mu+x},\  x\geq0$) generates the set of symmetric distributions about $\mu$. From Theorem~\ref{thm1}, it follows that the dual problem is given by
\begin{equation}\label{eq:dualsym}
\begin{aligned}
&\inf_{\lambda_0,\lambda_1, \lambda_2, \lambda_3} &  & \lambda_3\\
&\text{subject to } &      & \lambda_0 + \lambda_1 \mu + \lambda_2(\sigma^2+\mu^2)\leq 0, \\
& &      & 2\lambda_0+ 2\lambda_1 \mu+2\lambda_2 (x^2 + \mu^2)+\lambda_3 \1_{\event}(\mu-x)+\lambda_3 \1_{\event}(\mu+x)\\
&& &\qquad \geq (\mu-x)\1_{\event}(\mu-x) + (\mu+x)\1_{\event}(\mu+x), \ \forall x\geq0.
\end{aligned}
\end{equation}
The last constraint can be reduced to
$$
\begin{aligned}
2\lambda_0 + 2\lambda_1 \mu + 2\lambda_2 (x^2 + \mu^2)  \geq -2\lambda_3 + 2\mu, \quad & \forall 0\leq x < \mu-t, \\
2\lambda_0 + 2\lambda_1 \mu + 2\lambda_2 (x^2 + \mu^2) \geq x + \mu - \lambda_3, \quad & \forall x \geq \mu-t.
\end{aligned}
$$
Notice that $\1_{\event}(\mu+x)=1,\ \forall x\geq0$, since it is assumed that $t<\mu$.
Define the quadratic function $M^{\rm sym}(x):= 2\lambda_0 + 2\lambda_1 \mu + 2\lambda_2 (x^2 + \mu^2)$. We suggest two possible solutions for the dual problem. The first solution, denoted as $M_1^{\text{sym}}(x)$, touches $-2\lambda_3 + 2\mu$ at $x=0$ and $x + \mu - \lambda_3$ at $\mu-t$. The second solution, $M_2^{\text{sym}}(x)$, is a quadratic function that touches $x + \mu - \lambda_3$ at an optimal point $x_0$ that is unknown a priori. We further postulate that in both dual solutions, the constraint $\lambda_0 + \lambda_1 \mu + \lambda_2 (\mu^2 + \sigma^2) \leq 0$ is tight. In the interest of space, we omit the figure, but it is easily verified that the suggested solutions are dual feasible. The corresponding primal solutions follow from complementary slackness and are the pairs of Dirac measure $\delta_{\mu-x},\ \delta_{\mu+x}$ in which for $x$ we substitute the points at which the dual function coincides with the right-hand sides of the constraints in \eqref{eq:dualsym}.

The dual variables which correspond to $M_1^{\text{sym}}(x)$ are
$$
\lambda_1 = \frac{-\lambda_0+\frac{(t-\mu)\left(\mu^2+\sigma^2\right)}{2(t-\mu)^2-\sigma^2}}{\mu},\ 
\lambda_2 = \frac{\mu-t}{2(t-\mu)^2-\sigma^2},\  
$$
yielding
$$
\lambda^*_3 = \frac{2(t-\mu)^2 \mu-t \sigma^2}{2(t-\mu)^2-\sigma^2}
$$
as our guess for the optimal value of the dual problem. The proposed solution is feasible for the dual problem since $M_1^{\text{sym}}(x)$ is convex (as $\lambda_2\geq0$) and tangent to $-2\lambda_3 + 2\mu$ at $x=0$, and further some straightforward calculations show that the derivative of $M_1^{\text{sym}}(x)$ at $x=\mu-t$ is greater than 1, so that $M_1^{\text{sym}}(x)\geq x+\mu-\lambda^*_3,\ \forall x\geq\mu-t$. For the primal probabilities, it follows from the variance constraint
$$
(1-p)\mu^2 + \frac{1}{2}p t^2+\frac{1}{2} p (2 \mu -t)^2=\mu^2 +\sigma^2,
$$
that
$$
p = \frac{\sigma ^2}{(t-\mu )^2}.
$$
Hence,
$$
\E[X \mid X\geq t] = \frac{\frac{\sigma ^2 }{2 (t-\mu )^2} (2 \mu -t) + \left(1-\frac{\sigma ^2}{(t-\mu )^2}\right)\mu }{\frac{\sigma ^2}{2 (t-\mu )^2} + \left(1-\frac{\sigma ^2}{(t-\mu )^2}\right)} = \frac{2 \mu  (t-\mu )^2-\sigma ^2 t}{2 (t-\mu )^2-\sigma ^2} = \lambda^*_3,
$$
so that these are the optimal primal-dual solutions by weak duality.
For the second case, we determine the candidate support point $x_0$ first. From the moment constraints and the fact that the solution should be symmetric about $\mu$, we find that 
$
x^*_0 = \sigma
$
and therefore, the primal candidate is given by the distribution $\frac12\delta_{\mu-\sigma} + \frac12\delta_{\mu+\sigma}$.
The second dual solution then yields
$$
\lambda_1 = -\frac{4 \lambda_0 \sigma +\mu^2+\sigma^2}{4 \mu  \sigma },\ \lambda_2 = \frac{1}{4\sigma},\ \lambda^*_3 = \mu +\sigma.
$$
Thus,
$$
\E[X \mid X\geq t] = \mu + x^*_0 = \mu +\sigma = \lambda^*_3.
$$
For $t\geq\mu$, there exists a sequence of measures supported on $\{\mu,\mu-k,\mu+k\}$ that is feasible in the primal, and for which the conditional expectation diverges, as $k\to\infty$. This feasible sequence is given by
$$
\P_k = \left(1-\frac{\sigma ^2}{k^2}\right)\delta_{\mu} + \frac12 \frac{\sigma ^2}{k^2}\delta_{\mu-k} + \frac12 \frac{\sigma ^2}{k^2}\delta_{\mu+k} .
$$
It is then easily verified that $\lim_{k\to\infty}\E_{\P_k}[X\mid X\geq t]$ diverges. Combining the cases above, while checking for feasibility of the primal solutions, the claim follows.
\end{proof}

\begin{proof}[Proof of Proposition~\ref{prop:unimod}.]
Symmetric, unimodal distributions with mode $\mu$ can be generated by rectangular/uniform distributions, possibly including a Dirac measure at $\mu$ (i.e., $\delta_{[\mu-z,\mu+z]},\, z\geq0$). From Theorem~\ref{thm1}, it follows that the dual problem is given by
\begin{equation}\label{eq:dualunimod}
\begin{aligned}
&\inf_{\lambda_0,\lambda_1, \lambda_2, \lambda_3} &  & \lambda_3\\
&\text{subject to } &      & \lambda_0 + \lambda_1 \mu + \lambda_2(\sigma^2+\mu^2)\leq 0,  \\
& &      & \int_{\mu-x}^{\mu+x}\lambda_0+\lambda_1 z+\lambda_2 z^2 {\rm d}z \geq \int_{\mu-x}^{\mu+x} (z- \lambda_3)\1_{\event}(z) \, {\rm d}z, \ \forall x>0, \\
& &      & \lambda_0+\lambda_1 \mu+\lambda_2 \mu^2 \geq (\mu-\lambda_3). 
\end{aligned}
\end{equation}
After computing the integral on the left-hand side of the penultimate constraint, one obtains
$$
\begin{aligned}
\frac{2}{3} x \left(3 \lambda_0+3 \mu  (\lambda_1+\lambda_2 \mu )+\lambda_2 x^2\right) \geq \int_{\mu-x}^{\mu+x} (z- \lambda_3)\1_{\event}(z) \, {\rm d}z, \ \forall x>0.
\end{aligned}
$$
For the right-hand side, we distinguish two cases so that we can split the semi-infinite constraint into two sets, resulting in the system of inequalities
\begin{equation}\label{eq:dualconstraintsuni}
\begin{aligned}
\frac{2}{3} x \left(3 \lambda_0+3 \mu  (\lambda_1+\lambda_2 \mu )+\lambda_2 x^2\right)  \geq 2x(\mu-\lambda_3), \quad & \forall 0 < x < \mu-t, \\
\frac{2}{3} x \left(3 \lambda_0+3 \mu  (\lambda_1+\lambda_2 \mu )+\lambda_2 x^2\right)  \geq -\frac12 (t-x-\mu)(t+x-2\lambda_3+\mu), \quad & \forall x \geq \mu-t.
\end{aligned}
\end{equation}
Again, we can make an educated guess for an optimal dual solution, and using weak duality, prove optimality by constructing a matching primal solution using the complementary slackness property. The dual solution is now characterized by a third-order polynomial function, $M^{\rm uni}(x):= \frac{2}{3} x \left(3 \lambda_0+3 \mu  (\lambda_1+\lambda_2 \mu )+\lambda_2 x^2\right) $. 
We show through primal-dual reasoning that there are merely two feasible options for the extremal distribution. Using this insight, we optimize the primal problem directly by plugging in the candidate form of the extremal distribution. Notice that as $M^{\rm uni}(x)$ needs to be convex in order to be dual feasible, there cannot exist a tangent point on the interval $[0,\mu-t)$ because otherwise $M^{\rm uni}(x)$ would need to intersect $2x(\mu-\lambda_3)$. Furthermore, there can exist only one tangent point $x=x^*_0$ at which $M^{\rm uni}(x)$ coincides with the quadratic function $-\frac12 (t-x-\mu)(t+x-2\lambda_3+\mu)$, as $M^{\rm uni}(x)$ is a cubic function. 
By complementary slackness, the corresponding extremal distribution is then given by the mixture of a Dirac measure at $\mu$ and a uniform distribution on the interval $[\mu-x^*_0, \mu+x^*_0]$, for the first case, or a uniform distribution on $[\mu-\sqrt{3}\sigma, \mu+\sqrt{3}\sigma]$ for the second. Indeed, the latter uniform distribution is the only one that is feasible for the primal problem. From these observations, it follows that the primal problem can be reduced to a finite-dimensional (nonconvex) optimization problem. The objective function of the primal problem can be rewritten as 
\begin{equation}\label{eq:finiteredobj}
\E[X \mid X\geq t] = \frac{(1-p)\mu + p\int_{t}^{\mu+x_0} \frac{z}{2x_0}{\rm d}z}{(1-p)+p\int_{t}^{\mu+x_0} \frac{1}{2x_0}{\rm d}z} = \frac{4x_0 \mu - p(t+x_0-\mu)(t-x_0+\mu)}{4x_0 - 2p(t+x_0-\mu)}.
\end{equation}
From the variance constraint
$$
(1-p)\mu^2 + p\frac{(\mu-x_0)^2+(\mu-x_0)(\mu+x_0)+(\mu+x_0)^2}{3}=\sigma^2+\mu^2,
$$
it follows that $p=\frac{3\sigma^2}{x_0^2}$. In order to be a probability, it should hold that $p\leq1$, and hence $x_0\geq\sqrt{3}\sigma$. The variable $p$ can be eliminated from \eqref{eq:finiteredobj}, yielding the optimization problem
\begin{equation}\label{eq:finitered}
\max_{x_0}\left\{\frac{4 \mu  (x_0)^3-3 \sigma ^2 (-\mu +t+x_0) (\mu +t-x_0)}{4 (x_0)^3-6 \sigma ^2 (-\mu +t+x_0)} : x_0\geq\sqrt{3}\sigma\right\}. 
\end{equation}
From standard arguments, it follows that the maximum of \eqref{eq:finitered} must be attained at a critical point of the objective function or at the boundary of the feasible region. To arrive at the first case, we need to solve the first-order condition
$$
6\sigma^2 x_0^2 \left(3 (t-\mu )^2 - x_0^2\right)+9 \sigma ^4 (\mu-t-x_0)^2 = 0,
$$
which is a depressed quartic equation. It can be shown, after some tedious algebra, that there exists only one real-valued solution which is greater than $\sqrt{3}\sigma$ with the range of $t$ as given in the assertion.
The second case corresponds to the boundary of the feasible region for which $p=1$. As a consequence, the optimal tangent point $x_0^* = \sqrt{3}\sigma$. It is easy to verify that this solution yields the second case.
Finally, for the third case, $t\geq\mu$, we construct a maximizing sequence $\{\P_k\}$ for which $\E[X\mid X\geq t]$ diverges. To this end, consider
$$
\P_k = \left(1-\frac{3\sigma^2}{k^2}\right) \delta_{\mu} + \frac{3\sigma^2}{k^2} \delta_{[\mu-k,\mu+k]},
$$
which is feasible for the primal problem. Then
$$
\E_{\P_k}[X \mid X\geq t] = \frac{\int_{t}^{\mu+k} \frac{z}{2k}{\rm d}z}{\int_{t}^{\mu+k} \frac{1}{2k}{\rm d}z} = \frac12(t+\mu+k) \stackrel{k\to\infty}{\longrightarrow} \infty,
$$
hence resulting in the third case. Combining the cases above completes the proof.
\end{proof}

\begin{proof}[Proof of Proposition~\ref{prop:prob}.]
The primal can be equivalently stated as
$$
\sup_{\mathbb{P}\in\mathcal{P}_{(\mu,\sigma)}} \E[\1_{\{X\geq z\}} \mid X\geq p],
$$
which is (weakly) dual to
\begin{equation}\label{eq:dual1p}
\begin{aligned}
&\inf_{\lambda_0,\lambda_1, \lambda_2, \lambda_3} &  & \lambda_3\\
&\text{subject to } &      & \lambda_0 + \lambda_1 \mu + \lambda_2(\sigma^2+\mu^2)\leq 0, \\
& &      & \lambda_0+\lambda_1 x+\lambda_2 x^2 \geq (\1_{\{x\geq z\}}(x) - \lambda_3 )\1_{\event}(x), \ \forall x\geq0.
\end{aligned}
\end{equation}
The right-hand side of the constraint is equal to 0, for $x<t$, $-\lambda_3$ for $t\leq x<z$, and $1-\lambda_3$ for $x\geq z$. We discuss three cases, in the order of their appearance in the claim. The first dual solution, $M_1(x)$, corresponds to a convex quadratic function that touches $(\1_{\{x\geq z\}}(x) - \lambda_3 )\1_{\event}(x)$ at $z$ and $x_0$, where the latter point lies between $p$ and $z$. The primal probabilities, dual variables, and the support point $x_0$ follow from solving
$$
\begin{aligned}
p_{x_0} + p_z = 1,\ p_{x_0} x_0 + p_z z = \mu,\ p_{x_0} x^2_0 + p_z z^2 = \mu^2 + \sigma^2, \\
\lambda_0 + \lambda_1 x_0 + \lambda_2 x_0^2 = -\lambda_3,\ \lambda_0 + \lambda_1 z + \lambda_2 z^2 = 1-\lambda_3,\\
\lambda_1 + 2\lambda_2 x_0 = 0,\ \lambda_0 + \lambda_1 \mu + \lambda_2 (\mu^2+\sigma^2)=0,
\end{aligned}
$$
yielding as solution
$$
\begin{aligned}
&p_{x_0} = \frac{(z-\mu )^2}{\sigma ^2+(z-\mu )^2},\ p_z = \frac{\sigma ^2}{\sigma ^2+(z-\mu )^2},\ x_0 = \mu +\frac{\sigma ^2}{\mu -z},\\
&\lambda_0 = \frac{(z-\mu ) \left(\mu ^2 (z-\mu )-\sigma ^2 (\mu +z)\right)}{\left(\sigma ^2+(z-\mu )^2\right)^2},\ \lambda_1 = \frac{2 (z-\mu ) \left(\mu ^2+\sigma ^2-\mu  z\right)}{\left(\sigma ^2+(z-\mu )^2\right)^2},\ \lambda_2 = \frac{(z-\mu )^2}{\left(\sigma ^2+(z-\mu )^2\right)^2},\ \lambda_3 = \frac{\sigma ^2}{\sigma ^2+(z-\mu )^2}.
\end{aligned}
$$
Indeed, by weak duality, this gives the best possible bound since
$$
\E[\1_{\{x\geq z\}}(X)] = p_{z} = \frac{\sigma ^2}{\sigma ^2+(z-\mu )^2} = \lambda_3.
$$
For the second case, let $M_2(x)$ denote a quadratic function that touches at $x=z$ and some point $x_0$, like $M_1(x)$, but additionally agrees with $(\1_{\{x\geq z\}}(x) - \lambda_3 )\1_{\event}(x)$ at $x=p$. Again, we can use a similar set of conditions, as described above, to find the optimal primal and dual solutions, but now with  a three-point distribution with $x_0=\frac{\mu ^2+\sigma ^2-\mu  z}{\mu -z}$ (which follows from the conditions). This leads to the second bound.
For brevity, we omit the detailed calculations.
Finally, for the third case, $M_3(x)$ is a quadratic function that touches at $z$ and a point $0\leq x\leq p$. The same set of calculations leads to the third upper bound, which is equal to the constant 1.
We combine the cases above in such a way that the primal distributions are feasible. This completes the proof.
\end{proof}

\section{Conic reformulations}\label{app:conics}
\begin{proof}[Proof of the second claim in Theorem~\ref{thm3}.]
We will focus on the second semi-infinite constraint (the first constraint can be dealt with analogously), which can equivalently be written as the collection of robust counterparts
\begin{equation}\label{eq:DeMorganConstraints4}
\lambda_0 + \vlambda_1^\top \vx + \vlambda_2^\top \vu  +\lambda_{3}  \geq \vec{s}_l(\gvec{\nu})^\top \vec{x}  + t_l(\gvec{\nu}), \quad \forall (\vec{x},\vu)\in\mathcal{C},\ \forall l\in\mathcal{L}.
\end{equation}
Let “$\operatorname{cl}$” denote the closure of a set. We next generate a proper cone from the uncertainty set as follows. Define ${\mathcal{K}}:=\operatorname{cl}\left(\left\{(\vec{x}, \vec{u}, w) \in \mathbb{R}^n \times \mathbb{R}^m \times \mathbb{R} : (\vec{z} / w, \vec{u} / w) \in {\mathcal{C}}, w>0\right\}\right)$, to which the cone ${\mathcal{K}}^*$ is dual. We henceforth assume that $\mathcal{K}$ and its dual cone ${\mathcal{K}}^*$ are representable as tractable cones.
The semi-infinite constraint \eqref{eq:DeMorganConstraints4} is satisfied if, and only if,
\begin{equation}\label{eq:subprimal}
\begin{aligned}
 & \inf  &    &\left(\vlambda_1-\vec{s}_l(\vnu)\right)^{\top} \vx +\vlambda_2^\top \vu \\
& \text { s.t. }  &    &(\vec{x}, \vec{u}, 1) \in {\mathcal{K}},
\end{aligned}
\end{equation}
is greater than, or equal to, $t_l(\gvec{\nu})-\lambda_0-\lambda_3$. Suppose there exists a strictly feasible solution to this problem. Then, by conic duality, the strong dual of \eqref{eq:subprimal} is given by
\begin{equation}\label{eq:subdual}
\begin{aligned}
& \sup &  & -w_l \\
& \text { s.t. } &  &  \vlambda_1-\vec{s}_l(\vnu)-\vec{a}_l=\vec{0}, \\
&&  &  \vlambda_2-\vec{b}_l=\vec{0}, \\
&&  &  \left(\vec{a}_l, \vec{b}_l, w_l\right) \in {\mathcal{K}}^*.
\end{aligned}
\end{equation}
Therefore, the semi-infinite constraint \eqref{eq:DeMorganConstraints4} is satisfied if, and only if, there exist solutions $\left(\vec{a}_l, \vec{b}_l, w_l\right) \in {\mathcal{K}}^*$, for all $l\in\mathcal{L}$, such that the constraints in \eqref{eq:subdual} are satisfied and $-w_l$ is not less than $t_l(\gvec{\nu})-\lambda_0-\lambda_3$. Using this dual characterization, we can rewrite \eqref{eq:contextualDRO} in the following way:
\begin{equation}\label{eq:contextualDROconicform}
\begin{aligned}
&\inf_{} &    & \lambda_{3}\\
&\textnormal{s.t. } &         &\lambda_0 + \vlambda_1^\top \gvec{\mu} + \vlambda_2^\top \gvec{\sigma} \leq 0, \\
&&&\lambda_0 + \vlambda_1^\top \vec{x} + \vlambda_2^\top \vu \geq 0, &\forall (\vec{x},\vu) \in \overline{\mathcal{C}},\\
  &&&\lambda_0 +\lambda_{3}  - t_l(\vx) -w_l  \geq 0, \quad &\forall l\in\mathcal{L}, \\
  &&&\vlambda_1-\vec{s}_l(\vnu)-\vec{a}_l=\vec{0}, \quad &\forall l\in\mathcal{L}, \\
  &&&\vlambda_2-\vec{b}_l=\vec{0}, \quad &\forall l\in\mathcal{L}, \\
  &&&\left(\vec{a}_l, \vec{b}_l, w_l\right) \in {\mathcal{K}}^*, \quad &\forall l\in\mathcal{L}, \\
  &&& \gvec{\nu}\in\mathcal{V},\ \lambda_0\in\mathbb{R},\  \gvec{\lambda}_1\in\mathbb{R}^n,\  \gvec{\lambda}_2\in\mathcal{D}^*,\ \lambda_3\in\mathbb{R}. &
\end{aligned}
\end{equation}
If $\overline{\mathcal{C}}$ is also conic representable and satisfies a similar Slater condition, an analogous argument enables us to reformulate the first semi-infinite constraint, reducing \eqref{eq:contextualDROconicform} to a finite-dimensional conic optimization problem. Hence, the second claim follows. 
\end{proof}

\medskip
We use the following result to derive the LMI reformulations for the Chebyshev ambiguity set $\cP_{(\gvec{\mu},\gvec{\Sigma})}$.
\begin{lemma}[S-Lemma, \cite{polik2007survey}] Consider two quadratic functions of $\vec{x} \in \mathbb{R}^n$, $q_i(\vec{x})=\vec{x}^{\top} \vec{C}_i \vec{x}+2 \vec{c}_i^{\top} \vec{x}+\bar{c}_i,\  i=0,1$, with $q_1(\overline{\vec{x}})>0$ for some $\overline{\vec{x}}$. Then
$$
q_0(\vec{x}) \geq 0 \quad \forall \vec{x}: q_1(\vec{x}) \geq 0
$$
if, and only if, there exists $\tau \geq 0$ such that
$$
\left(\begin{array}{cc}
\bar{c}_0 & \vec{c}_0^{\top} \\
\vec{c}_0 & \vec{C}_0
\end{array}\right)-\tau\left(\begin{array}{cc}
\bar{c}_1 & \vec{c}_1^{\top} \\
\vec{c}_1 & \vec{C}_1
\end{array}\right) \in \mathbb{S}_{+}^{n+1}.
$$
\end{lemma}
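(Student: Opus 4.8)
The statement is the classical S-lemma (S-procedure), so the plan is to prove the two implications separately, the substance lying entirely in necessity. Throughout, denote by $\vec{M}_0,\vec{M}_1\in\mathbb{S}^{n+1}$ the symmetric matrices appearing in the conclusion, and note that for $\vec{y}=(1,\vec{x}^\top)^\top$ one has $q_i(\vec{x})=\vec{y}^\top\vec{M}_i\vec{y}$. \emph{Sufficiency} is immediate and requires no Slater point: if $\vec{M}_0-\tau\vec{M}_1\in\mathbb{S}_+^{n+1}$ for some $\tau\geq0$, then $q_0(\vec{x})-\tau q_1(\vec{x})=\vec{y}^\top(\vec{M}_0-\tau\vec{M}_1)\vec{y}\geq0$ for every $\vec{x}$, so $q_1(\vec{x})\geq0$ forces $q_0(\vec{x})\geq\tau q_1(\vec{x})\geq0$.

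For \emph{necessity} I would homogenize, passing to the forms $Q_i(\vec{y}):=\vec{y}^\top\vec{M}_i\vec{y}$ on $\mathbb{R}^{n+1}$ and studying their joint range $\mathcal{W}:=\{(Q_0(\vec{y}),Q_1(\vec{y})):\vec{y}\in\mathbb{R}^{n+1}\}\subseteq\mathbb{R}^2$. The structural ingredient is the convexity of the joint range of two real quadratic forms (Dines's theorem, see \cite{polik2007survey}), which makes $\mathcal{W}$ a convex cone. I would then show that the hypothesis (that $q_0(\vec{x})\geq0$ whenever $q_1(\vec{x})\geq0$) forces $\mathcal{W}$ to be disjoint from the open cone $\mathcal{N}:=\{(u,v):u<0,\ v>0\}$. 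For $\vec{y}=(s,\vec{w}^\top)^\top$ with $s\neq0$ this is exactly the hypothesis after dividing by $s^2>0$; the delicate case is $s=0$, where $Q_i(0,\vec{w})=\vec{w}^\top\vec{C}_i\vec{w}$ records the behaviour at infinity. Here the Slater point enters: along $\vec{x}_\rho=\overline{\vec{x}}+\rho\vec{w}$ one has $q_i(\vec{x}_\rho)/\rho^2\to\vec{w}^\top\vec{C}_i\vec{w}$, so $\vec{w}^\top\vec{C}_1\vec{w}>0$ makes $q_1(\vec{x}_\rho)>0$ for large $\rho$, whence $q_0(\vec{x}_\rho)\geq0$ and, in the limit, $\vec{w}^\top\vec{C}_0\vec{w}\geq0$; this keeps the $s=0$ part of $\mathcal{W}$ out of $\mathcal{N}$ as well.

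With convexity and disjointness established, I would invoke the separating hyperplane theorem to obtain $(\alpha,\beta)\neq\vec{0}$ with $\alpha u+\beta v\geq0$ on $\mathcal{W}$ and $\alpha u+\beta v\leq0$ on $\mathcal{N}$. Probing $\mathcal{N}$ along $u\to0^-$ and along $v\to0^+$ pins down the signs $\alpha\geq0$ and $\beta\leq0$, while the inequality on $\mathcal{W}$ reads $\alpha\vec{M}_0+\beta\vec{M}_1\in\mathbb{S}_+^{n+1}$. The Slater assumption is used a second time to exclude $\alpha=0$: otherwise $\beta\vec{M}_1\succeq0$ with $\beta<0$ gives $\vec{M}_1\preceq0$, contradicting $q_1(\overline{\vec{x}})>0$. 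Hence $\alpha>0$, and setting $\tau:=-\beta/\alpha\geq0$ yields $\vec{M}_0-\tau\vec{M}_1\in\mathbb{S}_+^{n+1}$, as required.

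The main obstacle is the recession ($s=0$) analysis in the second paragraph, that is, reconciling the hypothesis — which only constrains the affine slice $s=1$ — with the homogeneous range $\mathcal{W}$ over all of $\mathbb{R}^{n+1}$. This is precisely the step that fails without a Slater point, and it is where the boundary case $\vec{w}^\top\vec{C}_1\vec{w}=0$ must be handled with care, e.g.\ by perturbing $\vec{w}$ toward directions along which $q_1$ is strictly positive before passing to the limit. The convexity of $\mathcal{W}$ (Dines's theorem) is the other nontrivial input; if one prefers a self-contained treatment, I would prove it directly by a rank-one/interpolation argument on the preimages of two points of $\mathcal{W}$ rather than cite it.
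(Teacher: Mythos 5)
The paper does not prove this lemma at all: it is imported verbatim from the cited survey \cite{polik2007survey} and used as a black box to derive the LMI reformulations in Corollary~\ref{cor:Chebyshev}, so there is no in-paper argument to compare against. Your proof is a correct reconstruction of the classical argument, and in fact it is essentially the proof given in the cited survey: sufficiency via the Lagrangian/weak-duality inequality $q_0 \geq \tau q_1$, and necessity via homogenization, Dines's theorem on the convexity of the joint range $\mathcal{W}$ of two quadratic forms, conic separation of $\mathcal{W}$ from the open cone $\mathcal{N}$, and the Slater point to rule out the degenerate separator $\alpha = 0$. Two small refinements: the recession analysis at $s=0$ does not actually need the Slater point (along $\vec{x}_\rho = \vec{x}_\ast + \rho\vec{w}$ the leading term $\rho^2 \vec{w}^\top \vec{C}_1 \vec{w}$ dominates for \emph{any} base point $\vec{x}_\ast$, so $\overline{\vec{x}}$ plays no special role there; its only essential use is excluding $\alpha=0$), and the boundary case $\vec{w}^\top \vec{C}_1 \vec{w} = 0$ that you flag as delicate requires no handling at all, since $\mathcal{N}$ demands $v>0$ strictly and such points can never land in it. You should also record the standard normalization step that the separating affine functional can be taken homogeneous (threshold $\gamma = 0$) because both $\mathcal{W}$ and $\mathcal{N}$ are cones; this is the one detail your sketch silently assumes.
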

\medskip

\begin{proof}[Proof of Corollary~\ref{cor:Chebyshev}.]
By the S-Lemma, we have that
$$
\lambda_0 + \vlambda_1^{\top}\vx + \vx^{\top}\vec{\Lambda} \vx \geq 0,\ \forall \vx: \vec{c}^{\top}\vx \geq \bar{c}\iff \exists\tau\geq0:\ 
\begin{bmatrix}
\lambda_0 + \tau\bar{c} & \frac{1}{2}\left(\vlambda_1 - \tau \vec{c}\right)^{\top} \\
\frac{1}{2}\left(\vlambda_1 - \tau \vec{c}\right) & \boldsymbol{\Lambda}
\end{bmatrix}\succcurlyeq \mathbf{0}.
$$
Analogously, for the second semi-infinite constraint,
$$
\lambda_0 + (\vlambda_1-\vec{s}(\vnu))^{\top}\vx +\vx^{\top}\vec{\Lambda} \vx  +\lambda_3 - t_l(\gvec{\nu}) \geq 0,\ \forall \vx: \vec{c}^{\top}\vx \leq \bar{c}\iff\exists\chi_l\geq0:\ 
\begin{bmatrix}
\lambda_0 + \lambda_3 - t_l(\vnu) - \chi_l\bar{c} & \frac{1}{2}\left(\vlambda_1 - \vec{s}_l(\vnu) + \chi_l \vec{c}\right)^{\top} \\
\frac{1}{2}\left(\vlambda_1 - \vec{s}_l(\vnu) + \chi_l \vec{c}\right) & \boldsymbol{\Lambda}
\end{bmatrix}\succcurlyeq \mathbf{0}.
$$
These LMIs yield the  following semidefinite programming problem:
$$
\begin{aligned}
&\inf_{} &    & \lambda_{3}\\
&\textnormal{s.t. } &         &\lambda_0 + \vlambda_1^\top \gvec{\mu} + \langle\vec{\Lambda},\vec{\Sigma}\rangle \leq 0, \\
&&&\begin{bmatrix}
\lambda_0 + \tau\bar{c} & \frac{1}{2}\left(\vlambda_1 - \tau \vec{c}\right)^{\top} \\
\frac{1}{2}\left(\vlambda_1 - \tau \vec{c}\right) & \boldsymbol{\Lambda}
\end{bmatrix}\succcurlyeq \mathbf{0}, & \\
  &&&\begin{bmatrix}
\lambda_0 + \lambda_3 - t_l(\vnu) - \chi_l\bar{c} & \frac{1}{2}\left(\vlambda_1 - \vec{s}_l(\vnu) + \chi_l \vec{c}\right)^{\top} \\
\frac{1}{2}\left(\vlambda_1 - \vec{s}_l(\vnu) + \chi_l \vec{c}\right) & \boldsymbol{\Lambda}
\end{bmatrix}\succcurlyeq \mathbf{0}, \quad &\forall l\in\mathcal{L}, \\
  &&& \gvec{\nu}\in\mathcal{V},\ \lambda_0\in\mathbb{R},\  \gvec{\lambda}_1\in\mathbb{R}^n,\  \gvec{\Lambda}\in\mathbb{S}^{n}_{+},\ \lambda_3\in\mathbb{R},\ \tau\in\mathbb{R}_+,\ \gvec{\chi}\in\mathbb{R}^{|\mathcal{L}|}_+, &
\end{aligned}
$$
where $\langle\cdot,\cdot\rangle$ denotes the trace inner product. 
\end{proof}

\begin{proof}[Proof of Corollary~\ref{cor:MAD}.] The model with MAD information follows from defining separate constraints for the positive and negative parts of the absolute value terms. 
This yields
$$
\begin{aligned}
&\inf_{\gvec{\nu},\lambda_0,\vlambda_1,\vlambda_2,\lambda_3} &    & \lambda_{3}\\
&\textnormal{s.t. } &         &\lambda_0 + \vlambda_1^\top \vec{m} + \vlambda_2^\top \vec{f}\leq 0, \\
&&&\lambda_0 + \vlambda_1^\top \vec{x} + \vlambda_2^\top \vec{u} \geq 0, \quad &\forall (\vx,\vec{u}):\vec{c}^{\top}\vx\geq \bar{c},\ \vec{u}\geq \pm \vec{d}(\vx) \\
&&&\lambda_0 + \vlambda_1^\top \vec{x} + \vlambda_2^\top \vec{u}  +\lambda_{3}  \geq \vec{s}_l(\gvec{\nu})^\top \vec{x}  + t_l(\gvec{\nu}), \quad &\forall (\vx,\vec{u}):\vec{c}^{\top}\vx\leq\bar{c},\ \vec{u}\geq \pm \vec{d}(\vx)   \\
&&& \gvec{\nu}\in\mathcal{V},\ \lambda_0\in\mathbb{R},\  \gvec{\lambda}_1\in\mathbb{R}^n,\  \gvec{\lambda}_2\in\mathbb{R}^{n^2}_{+},\ \lambda_3\in\mathbb{R}, &
\end{aligned}
$$
where $\vd(\vx)=\vec{m}_0 + \vec{D}\vx$ describes the affine functions $X_i-m_i$ and $(X_i\pm X_k)-(m_i\pm m_k)$, and the inequalities $\vec{u}\geq \pm\vec{d}(\vec{x})$ hold elementwise. Let us focus on the first semi-infinite constraint. 
It can be rewritten as 
$$
\lambda_0 + \min_{\vx,\vec{u}:\vec{c}^{\top}\vx\geq \bar{c},\ \vec{u}\geq \pm \vec{d}(\vx)} \left\{ \vlambda_1^\top \vec{x} + \vlambda_2^\top \vec{u}\right\} \geq 0.
$$
Using the fact that $\vd$ is an affine, vector-valued function of $\vec{x}$, standard LP duality yields a finite-dimensional linear reformulation since 
$$
\begin{aligned}
&\lambda_0 + \min_{\vx,\vec{u}:\vec{c}^{\top}\vx\geq \bar{c},\ \vec{u}\geq \pm \vec{d}(\vx)} \left\{\vlambda_1^\top \vec{x} + \vlambda_2^\top \vec{u} \right\} \geq 0 \\
\iff& \lambda_0  + \max_{\tau\in\mathbb{R}_+, \boldsymbol{\chi}_+,\boldsymbol{\chi}_-\in\mathbb{R}_{+}^{n^2}}\left\{ \bar{c}\tau + \boldsymbol{\chi}_+^{\top}\vec{m}_0  - \boldsymbol{\chi}_-^{\top}\vec{m}_0 \ :\ \tau\vec{c} - \vec{D}^{\top} \boldsymbol{\chi}_+ + \vec{D}^{\top} \boldsymbol{\chi}_- = \vlambda_1,\  \boldsymbol{\chi}_+ + \boldsymbol{\chi}_- = \vlambda_2\right\}\geq 0\\
\iff& \exists \tau\in\mathbb{R}_+, \boldsymbol{\chi}_+,\boldsymbol{\chi}_-\in\mathbb{R}_{+}^{n^2} : \lambda_0 + \bar{c}\tau + \boldsymbol{\chi}_+^{\top}\vec{m}_0  - \boldsymbol{\chi}_-^{\top}\vec{m}_0 \geq0,\  \tau\vec{c} - \vec{D}^{\top} \boldsymbol{\chi}_+ + \vec{D}^{\top} \boldsymbol{\chi}_- = \vlambda_1,\  \boldsymbol{\chi}_+ + \boldsymbol{\chi}_- = \vlambda_2.
\end{aligned}
$$
A similar argument applies to the other semi-infinite constraint. Therefore, all robust counterparts can be rewritten in terms of linear inequalities, yielding the result.
\end{proof}

\end{appendix}

\bibliography{arxiv.bib}
\bibliographystyle{apalike}

\end{document}